    \newcommand{\BA}{{\mathbb {A}}} 
    \newcommand{\BC}{{\mathbb {C}}} 
     \newcommand{\BF}{{\mathbb {F}}}
    \newcommand{\BI}{{\mathbb {I}}}
    \newcommand{\BQ}{{\mathbb {Q}}}
     \newcommand{\BZ}{{\mathbb {Z}}}
    \newcommand{\CC}{{\mathcal {C}}} 
     \newcommand{\CF}{{\mathcal {F}}}
     \newcommand{\CH}{{\mathcal {H}}}
     \newcommand{\CL}{{\mathcal {L}}}
    \newcommand{\CO}{{\mathcal {O}}}
     \newcommand{\CX}{{\mathcal {X}}}
     \newcommand{\ff}{{\mathfrak{f}}}
    \newcommand{\fg}{{\mathfrak{g}}}
    \newcommand{\fm}{{\mathfrak{m}}} 
     \newcommand{\fp}{{\mathfrak{p}}}
    \newcommand{\fq}{{\mathfrak{q}}}
     \newcommand{\fX}{{\mathfrak{X}}}
    \newcommand{\Aut}{{\mathrm{Aut}}}
    \newcommand{\Char}{{\mathrm{Char}}}
    \newcommand{\coker}{{\mathrm{coker}}}
    \newcommand{\Frob}{{\mathrm{Frob}}}
    \newcommand{\Gal}{{\mathrm{Gal}}} \newcommand{\GL}{{\mathrm{GL}}}
    \newcommand{\Hom}{{\mathrm{Hom}}}
    \newcommand{\id}{{\mathrm{id}}}
    \renewcommand{\Im}{{\mathrm{Im~}}}
    \newcommand{\irred}{{\mathrm{irred}}}
    \newcommand{\ord}{{\mathrm{ord}}}
    \renewcommand{\mod}{\ \mathrm{mod}\ }
    \newcommand{\rel}{{\mathrm{rel}}}
    \newcommand{\Sel}{{\mathrm{Sel}}} 
    \newcommand{\str}{{\mathrm{str}}}
    \newcommand{\sign}{{\mathrm{sign}}}
    \newcommand{\SL}{{\mathrm{SL}}}
    \newcommand{\tr}{{\mathrm{tr}}}\newcommand{\tor}{{\mathrm{tor}}}
    \newcommand{\ur}{{\mathrm{ur}}}
 \newcommand{\Nm}{{\mathrm{Nm}}}
    \font\cyr=wncyr10
    \newcommand{\Sha}{\hbox{\cyr X}}
    \newcommand{\ov}{\overline}
    \newcommand{\ra}{\rightarrow}
    \theoremstyle{plain}
    \newtheorem{thm}{Theorem}[section] \newtheorem{cor}[thm]{Corollary}
    \newtheorem{lem}[thm]{Lemma}  \newtheorem{prop}[thm]{Proposition}
    \newtheorem {conj}[thm]{Conjecture} \newtheorem{defn}[thm]{Definition}
\theoremstyle{remark} \newtheorem{remark}[thm]{Remark}
\theoremstyle{remark} 
\theoremstyle{remark} 
    \newcommand{\Neron}{N\'{e}ron~}
    \numberwithin{equation}{section}
    \newcommand{\Gr}{{\mathrm{Gr}}}
\begin{document}

\title{Main conjectures for non-CM elliptic curves at good ordinary primes}
\author{Xiaojun Yan}
\address{Academy of Mathematics and Systems Science,
Chinese Academy of Sciences,
Beijing
100190, China.}
\email{xjyan95@amss.ac.cn}

\author{Xiuwu Zhu}
\address{Beijing Institute of Mathematical Sciences and Applications, Beijing 101408, China;}
\address{Yau Mathematical Sciences Center, Tsinghua University, Beijing 100084, China.}
\email{xwzhu@bimsa.cn}

\begin{abstract}
Let $E/\mathbb{Q}$ be an elliptic curve, $K$ an imaginary quadratic field, and let $p > 2$ be a prime that splits in $K$ and at which $E$ has good ordinary reduction. 
Assume that the residual Galois representation associated with $(E, p)$ is irreducible. 
In this paper, we establish new cases of the two-variable Iwasawa main conjecture for $E$ over $K$. 
As applications, we obtain more general results on the $p$-converse theorem and the $p$-part of the Birch and Swinnerton-Dyer formula in rank at most one.

\end{abstract}
\keywords{elliptic curve, Iwasawa main conjecture}

\maketitle
\tableofcontents
\section{Introduction}
Let $E$ be an elliptic curve over $\mathbb{Q}$. 
Fix an odd prime $p$ and embeddings $\iota_p:\overline{\mathbb{Q}} \hookrightarrow \overline{\mathbb{Q}}_p$, $\iota_\infty:\overline{\mathbb{Q}} \hookrightarrow \mathbb{C}$. 
Let $T_pE$ denote the $p$-adic Tate module of $E$, $\rho_E: G_{\mathbb{Q}} \to \operatorname{Aut}_{\mathbb{Z}_p}(T_pE)$ the associated $p$-adic Galois representation, and $\ov{\rho}_E:G_{\mathbb{Q}} \to \operatorname{Aut}(E[p])$ the corresponding residual representation, i.e., the reduction modulo $p$ of $\rho_E$.

To study the arithmetic properties of $E$, such as those predicted by the Birch and Swinnerton-Dyer conjecture, one often turns to tools from Iwasawa theory. 
There are two central objects in Iwasawa theory: Selmer groups over Iwasawa extensions and $p$-adic $L$-functions. 
The Iwasawa main conjectures establish a deep connection between them.

Let $K$ be an imaginary quadratic field. 
Suppose that $p = \mathfrak{p} \bar{\mathfrak{p}}$ splits in $K$, where $\mathfrak{p}$ is the prime determined by the embedding $\iota_p$. 
Let $K_\infty^+$ and $K_\infty^-$ denote the cyclotomic and anticyclotomic $\mathbb{Z}_p$-extensions of $K$, respectively, and set $K_\infty = K_\infty^+ K_\infty^-$. 
Then the associated Iwasawa algebras are given by
\[
\Lambda_K^{\pm} = \mathbb{Z}_p[[\operatorname{Gal}(K_\infty^{\pm}/K)]], \quad 
\Lambda_K = \mathbb{Z}_p[[\operatorname{Gal}(K_\infty/K)]].
\]
For any locally compact $\mathbb{Z}_p$-module $M$, let $M^\vee := \operatorname{Hom}_{\mathrm{cont}}(M, \mathbb{Q}_p/\mathbb{Z}_p)$ denote its Pontryagin dual.

We consider the following two types of Selmer groups over $K_\infty$:
\begin{enumerate}
    \item the $\Lambda_K$-module Selmer group $H^1_{\mathcal{F}_{\mathrm{ord}}}(K, T_pE \otimes \Lambda_K^\vee)$, with ordinary local conditions at $v \mid p$;
    \item the $\Lambda_K$-module Selmer group $H^1_{\mathcal{F}_{\mathrm{Gr}}}(K, T_pE \otimes \Lambda_K^\vee)$, with relaxed local conditions at $v = \mathfrak{p}$ and strict conditions at $v = \bar{\mathfrak{p}}$.
\end{enumerate}

As in Castella-Grossi-Skinner \cite{CGS}, we have two versions of the two-variable $p$-adic $L$-function:
\begin{enumerate}
    \item $\mathcal{L}_p^{\mathrm{PR}}(E/K) \in \Lambda_K$, which interpolates the algebraic parts of $L(E/K, \chi^{-1}, 1)$ for finite order characters $\chi$ of $\operatorname{Gal}(K_\infty/K)$;
    \item $\mathcal{L}_p^{\mathrm{Gr}}(E/K) \in \Lambda_K^{\mathrm{ur}} := \Lambda_K \widehat{\otimes}_{\mathbb{Z}_p} \mathbb{Z}_p^{\mathrm{ur}}$, which interpolates the algebraic parts of $L(E/K, \chi, 1)$ for characters $\chi$ of $\Gal(K_\infty/K)$ with infinity type $(b, a)$ such that $a \leq -1$ and $b \geq 1$, where $\mathbb{Z}_p^{\mathrm{ur}}$ denotes the ring of integers of the maximal unramified extension of $\mathbb{Q}_p$.
\end{enumerate}

\begin{conj}\label{mainconj}

    Suppose that the residual representation $\bar{\rho}_E|_{G_K}$ is irreducible.
    \begin{enumerate}
        \item $H^1_{\mathcal{F}_{\mathrm{ord}}}(K, T_pE \otimes \Lambda_K^\vee)^\vee$ is a torsion $\Lambda_K$-module and
        \[
        \operatorname{Char}_{\Lambda_K} \left(H^1_{\mathcal{F}_{\mathrm{ord}}}(K, T_pE \otimes \Lambda_K^\vee)^\vee \right) = \left( \mathcal{L}_p^{\mathrm{PR}}(E/K) \right).
        \]
        \item $H^1_{\mathcal{F}_{\mathrm{Gr}}}(K, T_pE \otimes \Lambda_K^\vee)^\vee$ is a torsion $\Lambda_K$-module and
        \[
        \operatorname{Char}_{\Lambda_K} \left(H^1_{\mathcal{F}_{\mathrm{Gr}}}(K, T_pE \otimes \Lambda_K^\vee)^\vee \right) \Lambda_K^{\mathrm{ur}} = \left( \mathcal{L}_p^{\mathrm{Gr}}(E/K) \right).
        \]
    \end{enumerate}
\end{conj}

\subsection{Main result}

\begin{thm}\label{mainthm}
    Suppose that the Heegner hypothesis holds (in particular, $\operatorname{sign}(E/K) = -1$), and that $\bar{\rho}_E|_{G_K}$ is absolutely irreducible. Then:
    \begin{enumerate}
        \item $H^1_{\mathcal{F}_{\mathrm{ord}}}(K, T_pE \otimes \Lambda_K^\vee)^\vee$ is a torsion $\Lambda_K$-module, and
        \[
        \operatorname{Char}_{\Lambda_K}\left( H^1_{\mathcal{F}_{\mathrm{ord}}}(K, T_pE \otimes \Lambda_K^\vee)^\vee \right) \subset \left( \mathcal{L}_p^{\mathrm{PR}}(E/K) \right).
        \]
        \item $H^1_{\mathcal{F}_{\mathrm{Gr}}}(K, T_pE \otimes \Lambda_K^\vee)^\vee$ is a torsion $\Lambda_K$-module, and
        \[
        \operatorname{Char}_{\Lambda_K}\left( H^1_{\mathcal{F}_{\mathrm{Gr}}}(K, T_pE \otimes \Lambda_K^\vee)^\vee \right)\Lambda_K^{\mathrm{ur}} \subset \left( \mathcal{L}_p^{\mathrm{Gr}}(E/K) \right).
        \]
    \end{enumerate}
    Moreover, if 
    \begin{equation}\tag{Im}\label{Imag}
        \text{there exists } \tau \in \operatorname{Gal}(\overline{\mathbb{Q}}/\mathbb{Q}(\mu_{p^\infty})) \text{ such that } T_pE / (\rho_E(\tau) - 1)T_pE \text{ is free of rank one over } \BZ_p
    \end{equation}
    then Conjecture~\ref{mainconj} holds.
\end{thm}

Skinner-Urban \cite{skinner2014iwasawa} first proved Conjecture~\ref{mainconj}(1) using Eisenstein congruences on $GU(2,2)$ under certain assumptions, in particular when $\operatorname{sign}(E/K) = +1$.
Wan \cite{Wan1,Wan} proved Conjecture~\ref{mainconj}(2) via Eisenstein congruences on $GU(3,1)$ under different assumptions, specifically when $\operatorname{sign}(E/K) = -1$ and $E$ is semistable.

In \cite{BSTW}, Burungale-Skinner-Tian-Wan established an equivalence between parts (1) and (2) of Conjecture~\ref{mainconj} by employing Beilinson-Flach elements and an explicit reciprocity law. This, combined with the result of Skinner-Urban concerning  Conjecture~\ref{mainconj}(1), enabled them to prove new cases of  Conjecture~\ref{mainconj}(2).

Our proof of Theorem~\ref{mainthm} adopts an approach similar to that presented in \cite{BSTW}. A key observation is that, under more general conditions, Skinner-Urban in fact proved that the left-hand side of Conjecture~\ref{mainconj}(1) is contained in the right-hand side after tensoring with the field of fractions of $\Lambda_K^+$.
To achieve full equality, they applied Vatsal's non-vanishing result \cite{Vat} on the $\mu$-invariant of the anticyclotomic projection of $\mathcal{L}_p^{\mathrm{PR}}(E/K)$ to show that $\operatorname{ord}_P(\mathcal{L}_p^{\mathrm{PR}}(E/K)) = 0$ for height-one primes $P$ of $\Lambda_K$ coming from $\Lambda_K^+$. This required the following conditions:
\begin{enumerate}
    \item $N = N^+N^-$, where $N^+$ is divisible only by primes that split in $K$ and $N^-$ is a square-free product of an odd number of primes that are inert in $K$, implying $\operatorname{sign}(E/K) = +1$;
    \item For every $\ell \mid N^-$, the residual representation satisfies $\bar{\rho}_E|_{I_\ell} \not\equiv 1$, where $I_\ell$ is the inertia subgroup at $\ell$.
\end{enumerate}
In particular, this forces $E$ to have a bad semistable prime.

In this paper, using the equivalence in \cite{BSTW}, we translate Skinner-Urban's results on Conjecture~\ref{mainconj}(1) into the setting of Conjecture~\ref{mainconj}(2). This allows us to conclude that the left-hand side of Conjecture~\ref{mainconj}(2) is contained in the right-hand side after tensoring with the fractional field of $\Lambda_K^+$. 

To obtain a full inclusion, we apply Hsieh's result \cite{hsieh2014special} on the non-vanishing of the $\mu$-invariant of the anticyclotomic projection of $\mathcal{L}_p^{\mathrm{Gr}}(E/K)$ in the case $\operatorname{sign}(E/K) = -1$. Since we work under the Heegner hypothesis, we can avoid the assumption that $E$ has a bad semistable prime.

As in \cite{skinner2014iwasawa}, we then invoke Kato's theorem on Mazur's main conjecture \cite{Kato} to complete the proof of Theorem~\ref{mainthm}.

\begin{remark}
    Recently, using Wan's extension of Skinner-Urban's method to the setting of Hilbert modular forms \cite{wan14}, together with a base change argument, Burungale-Castella-Skinner \cite{burungale2024base} proved the rational version of Conjecture~\ref{mainconj} (i.e., equality after tensoring with $\mathbb{Q}_p$) under the assumptions that $p > 3$ and $\bar{\rho}_E$ is irreducible, and proved the integral version when condition \eqref{Imag} also holds.

    Roughly speaking, by choosing a suitable real quadratic field $F$ and a CM extension $K'/F$ with $[K':F] = 2$, they study a three-variable main conjecture for $E_F/K'$ analogous to Conjecture~\ref{mainconj}(1). Following Skinner-Urban's method, Wan proves one direction of the divisibility result, similar to \cite{skinner2014iwasawa}, and applies a non-vanishing theorem for the $\mu$-invariant of an anticyclotomic $p$-adic $L$-function over totally real fields \cite{Hun}.

    In their setup, the analogue of $N^-$ is a square-free product of an even number of primes of $F$ inert in $K'$, which in particular may be trivial. Thus, they can derive results on various main conjectures for $E$ without requiring $E$ to have bad semistable reduction. Their arguments require $p > 3$ in order to apply \cite{Hun}, but do not require $(E, K)$ to satisfy the Heegner hypothesis.
\end{remark}

As an application, we prove additional cases of one-variable main conjectures, the $p$-part of the Birch and Swinnerton-Dyer formula, and the $p$-converse theorem.

\begin{cor}
Let $E/\mathbb{Q}$ be an elliptic curve of conductor $N$, and let $p \nmid 2N$ be a prime at which $E$ has good ordinary reduction. If $r \leq 1$, then the following are equivalent:
\begin{enumerate}
    \item $\operatorname{corank}_{\mathbb{Z}_p} \operatorname{Sel}_{p^\infty}(E/\mathbb{Q}) = r$;
    \item $\operatorname{ord}_{s=1} L(E/\mathbb{Q}, s) = r$.
\end{enumerate}
Moreover, under either condition, if \eqref{Imag} also holds, then the $p$-part of the Birch and Swinnerton-Dyer formula for $E$ holds.
\end{cor}

\begin{remark}
There has been extensive work on the $p$-part of the Birch and Swinnerton-Dyer formula and the $p$-converse theorem for elliptic curves. In our context, see \cite{skinner2014iwasawa}, \cite{zhang2014selmer}, \cite{JSW}, \cite{BSTW}, \cite{burungale2024base} for earlier results on the $p$-part of the BSD formula, and \cite{skinner2020converse}, \cite{zhang2014selmer}, \cite{BSTW}, \cite{burungale2023non} for related results on the $p$-converse theorem.
\end{remark}

\subsection{Strategy}
By \cite[Proposition 4.2.1]{CGS} or \cite[Proposition 9.18]{BSTW}, the use of Beilinson-Flach elements, combined with an explicit reciprocity law, establishes a connection between various formulations of the main conjectures. 
As a result, the inclusion relations (and their converses) in Theorem~\ref{mainthm}(1) and (2) are shown to be equivalent.

From the work of Skinner-Urban \cite{skinner2014iwasawa}, especially Theorem 7.7 and Proposition 13.6(1), we have the following:

\begin{thm}\label{skinner2014iwasawa}
    Suppose that the residual representation $\bar{\rho}_{E}: G_{\mathbb{Q}} \to \operatorname{Aut}(E[p])$ is irreducible. Then for any height-one prime $P$ of $\Lambda_K$, we have
    \[
    \operatorname{ord}_P \left( \operatorname{Char}_{\Lambda_K} \left( H^1_{\mathcal{F}_{\mathrm{ord}}}(K, T_pE \otimes \Lambda_K^\vee)^\vee \right) \right)
    \geq \operatorname{ord}_P \left( \mathcal{L}_p^{\mathrm{PR}}(E/K) \right),
    \]
    unless $P = P^+ \Lambda_K$ for some height-one prime $P^+ \subset \Lambda_K^+$.
\end{thm}

Following \cite[Proposition 9.18]{BSTW} (or \cite[Proposition 4.2.1]{CGS}), we have:

\begin{thm}\label{Gr}
    Under the same assumptions as in the previous theorem, for any height-one prime $P$ of $\Lambda_K^{\mathrm{ur}}$, we have
    \[
    \operatorname{ord}_P \left( \operatorname{Char}_{\Lambda_K} \left( H^1_{\mathcal{F}_{\mathrm{Gr}}}(K, T_pE \otimes \Lambda_K^\vee)^\vee \right) \Lambda_K^{\mathrm{ur}} \right)
    \geq \operatorname{ord}_P \left( \mathcal{L}_p^{\mathrm{Gr}}(E/K) \right),
    \]
    unless $P = P^+ \Lambda_K^{\mathrm{ur}}$ for some height-one prime $P^+ \subset \Lambda_K^{\mathrm{ur},+}$.
\end{thm}

However, by the result on the $\mu$-invariant of the BDP $p$-adic $L$-function due to Hsieh \cite{hsieh2014special}, if $P \subset \Lambda_K^{\mathrm{ur}}$ is a height-one prime of the form
$P = P^+ \Lambda_K^{\mathrm{ur}}$ for some $P^+ \subset \Lambda_K^{\mathrm{ur},+}$, then
\[
\operatorname{ord}_P \left( \mathcal{L}_p^{\mathrm{Gr}}(E/K) \right) = 0.
\]
Therefore, Theorem~\ref{mainthm}(2) holds, which implies that Theorem~\ref{mainthm}(1) also holds. 
Moreover, if condition \eqref{Imag} is satisfied, then as in \cite[Theorem 3.30]{skinner2014iwasawa}, we conclude that Conjecture~\ref{mainconj}(2) holds, and hence so does Conjecture~\ref{mainconj}(1).

\subsection{Acknowledgment}
The authors are grateful to Ye Tian and Wei He for their continuous encouragement and numerous helpful discussions, and to Ashay Burungale for his valuable comments.
We are also grateful to the reviewer for the thoughtful suggestions and invested expertise, which were instrumental in strengthening the paper.

\section{Selmer groups}
Let $K$ be an imaginary quadratic field with discriminant $D_K$, and let $p > 2$ be a prime that splits in $K$, say $p\mathcal{O}_K = \mathfrak{p} \bar{\mathfrak{p}}$. 
Let $\mathbb{Q}_\infty$ denote the cyclotomic $\mathbb{Z}_p$-extension of $\mathbb{Q}$, and let $K_\infty$ be the $\mathbb{Z}_p^2$-extension of $K$. We write $K_\infty^+$ and $K_\infty^-$ for the cyclotomic and anticyclotomic $\mathbb{Z}_p$-extensions of $K$, respectively.

Define the Galois groups
\[
\Gamma_{\mathbb{Q}} := \operatorname{Gal}(\mathbb{Q}_\infty / \mathbb{Q}), \quad 
\Gamma_K := \operatorname{Gal}(K_\infty / K), \quad 
\Gamma_K^\pm := \operatorname{Gal}(K_\infty^\pm / K).
\]
We identify $\Gamma_K^+ = \operatorname{Gal}(K_\infty^+ / K)$ with $\Gamma_{\mathbb{Q}} = \operatorname{Gal}(\mathbb{Q}_\infty / \mathbb{Q})$. Let $\gamma^\pm$ be a topological generator of $\Gamma_K^\pm$.

Let
\[
\Lambda_{\mathbb{Q}} := \mathbb{Z}_p[[\Gamma_{\mathbb{Q}}]], \quad 
\Lambda_K := \mathbb{Z}_p[[\Gamma_K]], \quad 
\Lambda_K^\pm := \mathbb{Z}_p[[\Gamma_K^\pm]]
\]
be the associated Iwasawa algebras. These rings are equipped with natural characters
\[
\varepsilon_{\mathbb{Q}}: G_{\mathbb{Q}} \twoheadrightarrow \Gamma_{\mathbb{Q}} \hookrightarrow \Lambda_{\mathbb{Q}}^\times, \quad 
\varepsilon_K: G_K \twoheadrightarrow \Gamma_K \hookrightarrow \Lambda_K^\times, \quad 
\varepsilon_{K,\pm}: G_K \twoheadrightarrow \Gamma_K^\pm \hookrightarrow \Lambda_K^{\pm,\times}
\]
arising from projection. The Pontryagin duals $\Lambda_{\mathbb{Q}}^\vee$, $\Lambda_K^\vee$, and $\Lambda_K^{\pm,\vee}$ are endowed with $G_{\mathbb{Q}}$- or $G_K$-actions via the inverses of these characters.

Throughout this paper, we normalize the reciprocity map of class field theory so that uniformizers correspond to arithmetic Frobenius elements. With this convention, we identify algebraic Hecke characters with their associated Galois characters.

\subsection{Selmer groups for Iwasawa algebras}
Let $E/\mathbb{Q}$ be an elliptic curve of conductor $N$ with $(N, D_K) = 1$. We assume that $E$ has good ordinary reduction at $p$.

\subsubsection*{Discrete Selmer groups}
Let $F$ be either $\mathbb{Q}$ or $K$. For a prime $w$ of $F$ above $p$, let $\mathbb{F}_w$ be the residue field at $w$, and let $\tilde{E}_{/\mathbb{F}_w}$ be the reduction of $E$. The kernel of the reduction map on the Tate module is denoted by $\mathcal{F}_w^+T_pE:=\ker\left(T_pE\rightarrow T_p\tilde{E}_{/\mathbb{F}_w}\right)$.

\begin{defn}\label{defn:local-condition}
    For $\Lambda$ any of $\Lambda_\BQ, \Lambda_K$, or $\Lambda_K^\pm$, and for a prime $w$ of $F$ over $p$, we define the following local conditions:
    \begin{enumerate}
       \item $H^1_{\ord}(F_w,T_pE\otimes\Lambda^\vee) := \operatorname{Im}\left(H^1(F_w,\mathcal{F}_w^+T_pE\otimes\Lambda^\vee)\rightarrow H^1(F_w,T_pE\otimes\Lambda^\vee)\right)$,
       
       \item $H^1_{\rel}(F_w,T_pE\otimes\Lambda^\vee) := H^1(F_w,T_pE\otimes\Lambda^\vee)$,
       
       \item $H^1_{\str}(F_w,T_pE\otimes\Lambda^\vee) := 0$.
    \end{enumerate}
\end{defn}

Let $\Sigma$ be a set of places of $F$ containing all places dividing $pN\infty$. In the anticyclotomic case ($F=K, \Lambda=\Lambda_K^-$), we assume moreover that every finite place in $\Sigma$ splits completely in $K/\mathbb{Q}$. Let $F_\Sigma$ be the maximal extension of $F$ unramified outside $\Sigma$, and let $G_{F,\Sigma}:=\Gal(F_\Sigma/F)$. 
 
In the case $F=\mathbb{Q}$, for $a\in\{\ord, \str, \rel\}$ and $M=T_pE\otimes\Lambda_\mathbb{Q}^\vee$, we define the Selmer group
\[H^1_{\mathcal{F}_{a}}(\mathbb{Q},M) := \ker\left(H^1(G_{\mathbb{Q},\Sigma},M)\rightarrow \prod_{q\in\Sigma, q\nmid p\infty} H^1(\mathbb{Q}_q,M)\times \frac{H^1(\mathbb{Q}_p,M)}{H^1_{a}(\mathbb{Q}_p,M)}\right).\]
Its Pontryagin dual is denoted by
\[\mathcal{X}_{\mathcal{F}_{a}}(E/\mathbb{Q}_\infty) := H^1_{\mathcal{F}_{a}}(\mathbb{Q},T_pE\otimes\Lambda_\mathbb{Q}^\vee)^\vee.\]
In the case $F=K$, for $a,b\in\{\ord, \str, \rel\}$, $\Lambda\in \{\Lambda_K,\Lambda_K^-,\Lambda_K^+\}$, and $M=T_pE\otimes\Lambda^\vee$, we define
\[H^1_{\mathcal{F}_{a,b}}(K,M) := \ker\left(H^1(G_{K,\Sigma},M)\rightarrow \prod_{\mathfrak{q}\in\Sigma,\mathfrak{q}\nmid p}H^1(K_\mathfrak{q},M)\times \frac{H^1(K_\mathfrak{p},M)}{H^1_{a}(K_\mathfrak{p},M)}\times \frac{H^1(K_{\bar{\mathfrak{p}}},M)}{H^1_{b}(K_{\bar{\mathfrak{p}}},M)}\right).\]
For simplicity, we write
\begin{enumerate}
    \item $H^1_{\mathcal{F}_a}(K,M) := H^1_{\mathcal{F}_{a,a}}(K,M)$ for $a\in\{\ord, \str, \rel\}$,
    \item $H^1_{\mathcal{F}_\Gr}(K,M) := H^1_{\mathcal{F}_{\rel,\str}}(K,M)$.
\end{enumerate}
For $a\in\{\ord, \str, \rel, \Gr\}$, we define the dual Selmer groups
\[\mathcal{X}_{\mathcal{F}_{a}}(E/K_\infty) := H^1_{\mathcal{F}_{a}}(K,T_pE\otimes\Lambda_K^\vee)^\vee,\quad \mathcal{X}_{\mathcal{F}_{a}}(E/K^\pm_\infty) := H^1_{\mathcal{F}_{a}}(K,T_pE\otimes\Lambda_K^{\pm,\vee})^\vee.\]

\begin{lem}\label{pcond}
Let $\Lambda$ be $\Lambda_K$, $\Lambda_\BQ$, or $\Lambda_K^{\pm}$.
    For every prime $\fq$ of $K$ dividing $p$, we have 
    $$H^1\left(G_{K_\fq}/I_\fq,(T_p\tilde{E}_{/\BF_\fq}\otimes\Lambda^\vee)^{I_\fq}\right)=0$$
\end{lem}

\begin{proof}
We consider the case $\Lambda=\Lambda_K$; the other cases are similar.

     Let $\alpha_\fq:G_{K_\fq}\ra \BZ_p^\times$ be the unramified character associated to $T_p\tilde{E}_{/\BF_\fq}$. The action of $G_{K_\fq}$ on the module $T_p\tilde{E}_{/\BF_\fq}\otimes\Lambda_K^\vee$ is then given by the character $\alpha_\fq\varepsilon_K^{-1}$.
     Let $C_\fq\subset\Lambda_K$ be the ideal generated by the set $\{\varepsilon_K(g)-1:g\in I_\fq\}$, and let $\sigma_\fq\in G_{K_\fq}$ be a lift of $\Frob_\fq$. A standard calculation yields the isomorphism:
     \[H^1\left(G_{K_\fq}/I_\fq,(T_p\tilde{E}_{/\BF_\fq}\otimes\Lambda_K^\vee)^{I_\fq}\right)\simeq \Hom_{\mathrm{cont}}\left((\Lambda_K/C_\fq)^{\varepsilon_K(\sigma_\fq)=\alpha_\fq^{-1}(\Frob_\fq)},\BQ_p/\BZ_p\right),\]
     where $(\Lambda_K/C_\fq)^{\varepsilon_K(\sigma_\fq)=\alpha_\fq^{-1}(\Frob_\fq)}$ denotes the submodule of $\Lambda_K/C_\fq$ annihilated by the operator $\varepsilon_K(\sigma_\fq)-\alpha_\fq^{-1}(\Frob_\fq)$.
    By \cite[Lemma 12.5.4]{Nek}, for any embedding $\sigma:\ov{\BQ}\ra \BC$, $|\sigma(\alpha_\fq(\Frob_\fq))|=p^{1/2}$. Thus, $\alpha_\fq(\Frob_\fq)$ is not a root of unity. Consequently, the submodule $(\Lambda_K/C_\fq)^{\varepsilon_K(\sigma_\fq)=\alpha_\fq^{-1}(\Frob_\fq)}$ must be trivial, which completes the proof.
\end{proof}
\begin{remark}\label{coin}
    Lemma \ref{pcond} shows that the ordinary local condition $H^1_{\ord}$ defined here coincides with that in  \cite{skinner2014iwasawa}.
\end{remark}

\begin{lem}\label{sum}
    We have the following isomorphisms of $\Lambda_K^+$-modules:
    \[ \mathcal{X}_{\mathcal{F}_{\ord}}(E/{K}_\infty^+)\simeq\mathcal{X}_{\mathcal{F}_{\ord}}(E/{\BQ}_\infty)\oplus \mathcal{X}_{\mathcal{F}_{\ord}}(E^K/{\BQ}_\infty) \]
    and
    \[ \mathcal{X}_{\mathcal{F}_{\ord}}^{\Sigma}(E/{K}_\infty^+)\simeq\mathcal{X}_{\mathcal{F}_{\ord}}^{\Sigma}(E/{\BQ}_\infty)\oplus \mathcal{X}_{\mathcal{F}_{\ord}}^{\Sigma}(E^K/{\BQ}_\infty), \]
    via the natural identification $\Lambda_K^+\simeq \Lambda_\BQ$, where \(E^K\) denotes the quadratic twist of \(E\) associated to the quadratic extension \(K/\BQ\).
\end{lem}
\begin{proof}
    See \cite[Lemma 3.6]{skinner2014iwasawa}.
\end{proof}

For any prime $\mathfrak{q}$ of $K$, let $I_\mathfrak{q}$ denote the inertia subgroup of $G_{K_\mathfrak{q}}$.
\begin{lem}\label{loc}
    Let $\Lambda$ be $\Lambda_K$ or $\Lambda_K^+$.
    For any prime $\mathfrak{q} \nmid p$ of $K$, we have:
    \begin{enumerate}
        \item $H^1(G_{K_\mathfrak{q}}/I_\mathfrak{q},(T_pE\otimes\Lambda^\vee)^{I_\mathfrak{q}})=0$.
        \item $\operatorname{Char}_{\Lambda}((H^1(I_\mathfrak{q},T_pE\otimes\Lambda^\vee)^{G_{K_\mathfrak{q}}})^\vee)=(P_\mathfrak{q}(\varepsilon_K(\operatorname{Frob}_\mathfrak{q}^{-1})))$, where
        $$P_\mathfrak{q}(X)=\det\left(1-\operatorname{Nm}(\mathfrak{q})^{-1}X\cdot\operatorname{Frob}_\mathfrak{q}\mid V_pE^{I_\mathfrak{q}}\right).$$
    \end{enumerate}
\end{lem}
\begin{proof}
    We consider the case $\Lambda=\Lambda_K$; the other case is similar.
    For simplicity, let $M:=T_pE\otimes\Lambda_{K}^\vee$ and $N:=T_pE\otimes\Lambda_{K}$. We have a natural $G_K$-equivariant perfect pairing
    $$M\times N\rightarrow \mathbb{Q}_p/\mathbb{Z}_p(1),$$
    which induces the perfect local Tate pairing
    $$H^1(K_\mathfrak{q},M)\times H^1(K_\mathfrak{q},N)\rightarrow \mathbb{Q}_p/\mathbb{Z}_p$$
    for every place $\mathfrak{q}$. Let $\mathfrak{q}\nmid p$ be a prime of $K$.

    (1) Since $G_{K_\mathfrak{q}}/I_\mathfrak{q}=\langle\operatorname{Frob}_\mathfrak{q}\rangle\simeq\widehat{\mathbb{Z}}$ has cohomological dimension $1$, the groups $H^1(G_{K_\mathfrak{q}}/I_\mathfrak{q},N^{I_\mathfrak{q}})$ and $H^1(G_{K_\mathfrak{q}}/I_\mathfrak{q},M^{I_\mathfrak{q}})$ annihilate each other under local Tate duality. The inflation-restriction exact sequence is
    \[0\to H^1(G_{K_\mathfrak{q}}/I_\mathfrak{q},N^{I_\mathfrak{q}})\to H^1(G_{K_\mathfrak{q}},N)\to H^1(I_\mathfrak{q},N)^{G_{K_\mathfrak{q}}}\to 0.\]
    Thus, to prove (1), it suffices to show that $H^1(I_\mathfrak{q},N)^{G_{K_\mathfrak{q}}}=0$.
    
    Since $\mathfrak{q}\nmid p$, the inertia group $I_\mathfrak{q}$ contains a unique closed subgroup $J_\mathfrak{q}$ such that $I_\mathfrak{q}/J_\mathfrak{q}\simeq\mathbb{Z}_p(1)$ as a $\operatorname{Gal}(K_\mathfrak{q}^{\mathrm{ur}}/K_\mathfrak{q})$-module, and $J_\mathfrak{q}$ has pro-finite order prime to $p$. The inflation-restriction sequence yields
    $$H^1(I_\mathfrak{q},N) \cong H^1(I_\mathfrak{q}/J_\mathfrak{q},N^{J_\mathfrak{q}}).$$
    Let $N_0:=T_pE^{J_\mathfrak{q}}$. Since $\varepsilon_K(I_\mathfrak{q})=1$, we have $N^{J_\mathfrak{q}}=N_0\otimes\Lambda_{K}$. Let $\sigma_\mathfrak{q}$ be a topological generator of $I_\mathfrak{q}/J_\mathfrak{q}$. For any cocycle $\phi$ representing a class in $H^1(I_\mathfrak{q}/J_\mathfrak{q},N_0\otimes\Lambda_{K})^{G_{K_\mathfrak{q}}}$ and any $g\in G_{K_\mathfrak{q}}$, we have the relation $g\cdot\phi=\phi$. This implies that for some integer $r=r(g)$,
    $$g^{-1}(\phi(\sigma_\mathfrak{q}))=\phi(g^{-1}\sigma_\mathfrak{q} g)=\phi(\sigma_\mathfrak{q}^r)=(1+\sigma_\mathfrak{q}+\cdots+\sigma_\mathfrak{q}^{r-1})\phi(\sigma_\mathfrak{q}).$$ 
    As $\mathfrak{q}$ does not split completely in $K_\infty/K$, the image $\varepsilon_K(G_{K_\mathfrak{q}})$ is nontrivial. We can therefore choose $g$ such that $\varepsilon_K(g)\neq 1$. However, since $\varepsilon_K(\sigma_\mathfrak{q})=1$, it follows that $\phi(\sigma_\fq)=0$, which implies that $H^1(I_\mathfrak{q},N)^{G_{K_\mathfrak{q}}}=0$.

    (2) Now we prove the second part. As before, we have the duality
    \[(H^1(I_\mathfrak{q},M)^{G_{K_\mathfrak{q}}})^\vee\simeq H^1(G_{K_\mathfrak{q}}/I_\mathfrak{q},N^{I_\mathfrak{q}}).\]
    Since
    $$H^1(G_{K_\mathfrak{q}}/I_\mathfrak{q},N^{I_\mathfrak{q}})\simeq(T_pE^{I_\mathfrak{q}}\otimes\Lambda_{K})/(1-\operatorname{Frob}_\mathfrak{q}^{-1}),$$
    it follows that
    $$\mathrm{Fitt}_{\Lambda_{K}}((H^1(I_\mathfrak{q},M)^{G_{K_\mathfrak{q}}})^\vee)=(P_\mathfrak{q}(\varepsilon_K(\operatorname{Frob}_\mathfrak{q}^{-1}))).$$
    This Fitting ideal is principal and therefore divisorial. The characteristic ideal $\operatorname{Char}_{\Lambda_{K}}((H^1(I_\mathfrak{q},M)^{G_{K_\mathfrak{q}}})^\vee)$ is the minimal divisorial ideal containing the Fitting ideal. Hence, we have
    $$\operatorname{Char}_{\Lambda_{K}}((H^1(I_\mathfrak{q},M)^{G_{K_\mathfrak{q}}})^\vee)=(P_\mathfrak{q}(\varepsilon_K(\operatorname{Frob}_\mathfrak{q}^{-1}))).$$
\end{proof}

\begin{lem}\label{bdp_des}
    The natural $\Lambda_K^-$-module homomorphism
    \[\mathcal{X}_{\mathcal{F}_\Gr}(E/K_\infty)/(\gamma^+-1)\mathcal{X}_{\mathcal{F}_\Gr}(E/K_\infty)\rightarrow\mathcal{X}_{\mathcal{F}_\Gr}(E/K_\infty^-)\]
    is a pseudo-isomorphism.
\end{lem}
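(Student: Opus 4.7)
The plan is to dualize and prove that the natural map
\[s\colon H^1_{\CF_\Gr}(K, M^-) \longrightarrow H^1_{\CF_\Gr}(K, M)[\gamma^+-1]\]
has kernel and cokernel pseudo-null as $\Lambda_K^-$-modules, where $M := T_pE\otimes\Lambda_K^\vee$ and $M^- := T_pE\otimes\Lambda_K^{-,\vee}$; the lemma then follows by Pontryagin duality. The key input is the short exact sequence of $G_K$-modules
\[0\to M^-\to M\xrightarrow{\gamma^+-1} M\to 0,\]
obtained by tensoring $T_pE$ with the analogous sequence for $\Lambda_K^\vee$, which is itself a consequence of the identification $\Lambda_K\cong \Lambda_K^-[[\gamma^+-1]]$ together with the $\Lambda_K^+$-divisibility of $\Lambda_K^\vee$.

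Applying $G_{K,\Sigma}$-cohomology and $G_{K_v}$-cohomology at each $v\in\Sigma$, and fitting these into the defining exact sequences of the two Selmer groups, one obtains a commutative diagram with exact rows. The associated long exact sequences identify the kernel of the global vertical map $H^1(G_{K,\Sigma},M^-)\to H^1(G_{K,\Sigma},M)[\gamma^+-1]$ with $H^0(G_{K,\Sigma},M)/(\gamma^+-1)$, and similarly locally at each $v$, while all corresponding cokernels vanish. The snake lemma then shows that $\ker s$ is a subquotient of $H^0(G_{K,\Sigma},M)/(\gamma^+-1)$ and that $\coker s$ is a subquotient of
\[\bigoplus_{\fq\in\Sigma,\,\fq\nmid p} H^0(K_\fq,M)/(\gamma^+-1) \;\oplus\; H^0(K_{\bar\fp},M)/(\gamma^+-1),\]
the direct sum collecting precisely those places which enter the $\CF_\Gr$-Selmer condition (the relaxed condition at $\fp$ contributes nothing).

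It remains to verify that each of these terms is pseudo-null over $\Lambda_K^-$. The global module $H^0(G_{K,\Sigma},M) = (T_pE\otimes\Lambda_K^\vee)^{G_K}$ consists of invariants factoring through a finite quotient of $\Gamma_K$, hence identifies with a subgroup of $E[p^\infty]^{G_{K_n}}$ for some finite layer $K_n/K$, which is a finite group and therefore pseudo-null. For $\fq\in\Sigma$ with $\fq\nmid p$, a character-by-character analysis paralleling the proof of Lemma \ref{loc} shows that $H^0(K_\fq,M)$ is $\Lambda_K^+$-cotorsion, making its $(\gamma^+-1)$-quotient pseudo-null over $\Lambda_K^-$. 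The main obstacle is the analysis at $v=\bar\fp$, where the strict Selmer condition forces the full local $H^0$ to enter: here one exploits the ramification of $\bar\fp$ in the cyclotomic $\BZ_p$-extension $K_\infty^+/K$ to conclude that $\varepsilon_{K,+}|_{G_{K_{\bar\fp}}}$ is non-trivial, which forces $H^0(K_{\bar\fp},M)$ to be $\Lambda_K^+$-cotorsion, from which pseudo-nullness over $\Lambda_K^-$ of the $(\gamma^+-1)$-quotient follows.
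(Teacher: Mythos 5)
Your overall strategy is the right one and, as far as one can tell from the paper's very short proof, it is also the strategy underlying the cited reference: dualize, apply cohomology to the short exact sequence $0\to M^-\to M\xrightarrow{\gamma^+-1}M\to 0$, and control the kernel and cokernel of $s$ by global and local $H^0$-terms via a snake-lemma/control-theorem argument. The kernel analysis is correct: $\ker s$ injects into $H^0(G_{K,\Sigma},M)/(\gamma^+-1)$, which in fact vanishes because $\bar\rho_E|_{G_K}$ is irreducible and $\Gamma_K$ is pro-$p$, so $E(K_\infty)[p^\infty]=0$. The identification of $\coker s$ as a subgroup of $\bigoplus_{v\in\Sigma,\, v\neq\fp}H^0(K_v,M)/(\gamma^+-1)$ is also correct.

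The gap is in the local pseudo-nullness claims. For $\fq\nmid p$, the assertion that ``$H^0(K_\fq,M)$ is $\Lambda_K^+$-cotorsion, making its $(\gamma^+-1)$-quotient pseudo-null over $\Lambda_K^-$'' is not a well-posed implication: being annihilated by some nonzero $f\in\Lambda_K^+$ does not make $H^0(K_\fq,M)^\vee[\gamma^+-1]$ finite unless one knows $f$ and $\gamma^+-1$ interact correctly, and what one actually needs is that $\gamma^+-1$ is a non-zero-divisor on $H^0(K_\fq,M)^\vee$ up to finite error. Concretely, $H^0(K_\fq,M)^\vee$ is pseudo-isomorphic to a quotient of $T_pE^{I_\fq,\vee}\otimes\Lambda_K$ by $\varepsilon_K(\Frob_\fq)F^{-t}-1$ (with $F=\rho_E(\Frob_\fq)|_{T_pE^{I_\fq}}$), and the $(\gamma^+-1)$-torsion of this quotient can be nonzero precisely when $\varepsilon_{K,-}(\Frob_\fq)$ coincides with a Frobenius eigenvalue — in particular when $\fq$ splits completely in $K_\infty^-/K$, which happens when the rational prime below $\fq$ is inert in $K$. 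This is exactly why the paper's proof flags the hypothesis (built into the definition of $\Sigma$ in the anticyclotomic case) that every finite place of $\Sigma$ splits in $K$, and why it invokes Lemma~\ref{loc}(1): these rule out the degenerate behaviour at $\fq\nmid p$. Your proof never uses the split hypothesis, and never appeals to Lemma~\ref{loc}(1). Similarly at $\bar\fp$, ramification of $\bar\fp$ in $K_\infty^+/K$ alone is not enough; one needs the ordinary filtration on $T_pE|_{G_{K_{\bar\fp}}}$ together with the fact that the unit Frobenius eigenvalue $\alpha_p$ is not a $p$-power root of unity to rule out $(\gamma^+-1)$-torsion in $H^0(K_{\bar\fp},M)^\vee$. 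In short: the architecture is right, but the three local finiteness statements at $\fq\nmid p$, at $\bar\fp$, and the role of the split hypothesis must be proved, not asserted, and these are precisely the points where the paper leans on \cite[Proposition 3.1]{Wan} and Lemma~\ref{loc}(1).
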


\begin{proof}
    The result follows from \cite[Proposition 3.1]{Wan} combined with Lemma \ref{loc}(1). Note that in our case, every finite place in $\Sigma$ splits in $K$.
\end{proof}

We also consider an imprimitive Selmer group. For $M=T_p E\otimes \Lambda_K^\vee$, we define
\[H^1_{\mathcal{F}^\Sigma_{\ord}}(K,M) := \ker\left(H^1(G_{K,\Sigma},M)\rightarrow \prod_{\mathfrak{q}|p}\frac{H^1(K_{\mathfrak{q}},M)}{H^1_{\ord}(K_{\mathfrak{q}},M)}\right)\]
and its Pontryagin dual $\mathcal{X}^{\Sigma}_{\mathcal{F}_\ord}(E/K_\infty) := H^1_{\mathcal{F}^\Sigma_{\ord}}(K,M)^\vee$.

\begin{lem}\label{trivial}
    Suppose that the residual representation $\ov{\rho}_E|_{G_K}:G_K\ra \Aut(E[p])$ is irreducible. Then there is no nontrivial
$\BF_p$-subquotient of $E[p]$ on which $G_{K_\infty}$ acts trivially.
\end{lem}
\begin{proof}
     Let $L$ be the fixed field of $\ker(\ov{\rho}_E|_{G_K})$. By the properties of the Weil pairing, we have $L\supset K(\mu_p)$.
 Let $L_0:=L\cap K_\infty$. Then $\Gal(L/L_0)$ is the image of the natural map $G_{K_\infty}\ra \Gal(L/K)$.
 
 We first claim that if the restriction $\ov{\rho}_E|_{G_{L_0}}$ is reducible, then $p\nmid \#(\Gal(L/L_0))$. Indeed,  since $G_{L_0}$ is a normal subgroup of $G_K$, if $W\subset E[p]$ is a one-dimensional $\BF_p$-subspace invariant under $G_{L_0}$, then for any $g\in G_K$, the subspace $g W\subset E[p]$ is also invariant under $G_{L_0}$. By the irreducibility of $\ov{\rho}_E|_{G_K}$, there exists a $g_0\in G_K$ such that $g_0 W \neq W$. Consequently, $E[p]=W\oplus g_0 W$ as a $G_{L_0}$-module. We can therefore choose an $\BF_p$-basis of $E[p]$ such that the image of $G_{L_0}$ is contained in the diagonal matrix group. Since the order of the diagonal matrix group is $(p-1)^2$, which is coprime to $p$, we conclude that $p \nmid \#\Gal(L/L_0)$.

Now, suppose there exists a one-dimensional $G_{K_\infty}$-stable $\BF_p$-subspace $C \subset E[p]$ such that the action on $C$ or on $E[p]/C$ is trivial; in particular, the restriction $\ov{\rho}_E|_{G_{K_\infty}}$ is reducible. Let $P \in C-\{0\}$, and choose $Q\in E[p]-C$. Since the action of $G_{K_\infty}$ factors through $\Gal(L/L_0)$, the restriction $\ov{\rho}_E|_{G_{L_0}}$ is reducible, and for any $\sigma\in \Gal(L/L_0)$, we have
\[\sigma P=\alpha(\sigma) P, \quad \sigma Q=\beta(\sigma) P+\gamma(\sigma) Q\]
with coefficients $\alpha(\sigma), \beta(\sigma), \gamma(\sigma)\in \BF_p$ satisfying
\begin{enumerate}
    \item either $\alpha(\sigma) = 1$ for all $\sigma \in \Gal(L/L_0)$, or $\gamma(\sigma) = 1$ for all $\sigma \in \Gal(L/L_0)$.
    \item $\alpha(\sigma)\gamma(\sigma)=\omega(\sigma)$, where $\omega$ is the Teichm\"{u}ller character.
\end{enumerate}
Consider the subgroup $\Gal(L/L_0(\mu_p))\subset \Gal(L/L_0)$. For any $\sigma\in \Gal(L/L_0(\mu_p))$, we have $\omega(\sigma)=1$, hence $\alpha(\sigma)=\gamma(\sigma)=1$. Therefore we obtain an embedding 
    \[\Gal(L/L_0(\mu_p)) \hookrightarrow \left\{\begin{pmatrix}
        1 & a\\
        0 & 1
    \end{pmatrix}\,\middle|\, a\in \BF_p\right\} \simeq \BF_p.\]
Since $\Gal(L/L_0(\mu_p))$ is a subgroup of $\Gal(L/L_0)$ and $p\nmid \#(\Gal(L/L_0))$ as shown previously, we have that $L=L_0(\mu_p)$. 
On the other hand, since $[K(\mu_p):K]$ divides $p-1$ and $[L_0:K]$ is a power of $p$, we have that
$L_0\cap K(\mu_p)=K$. Hence $L$ is an abelian extension of $K$, with Galois group $\Gal(L/K)\simeq \Gal(L/L_0)\times \Gal(L/K(\mu_p))$, and therefore  the representation $\ov{\rho}_E|_{G_{K}}$ is reducible over $\ov{\BF}_p$.  Moreover, let $\alpha(\sigma), \gamma(\sigma)\in \ov{\BF}_p^\times$ denote the eigenvalues of $\sigma \in \Gal(L_0(\mu_p)/K)$. We have 
\begin{enumerate}
    \item If $\sigma\in \Gal(L/L_0)$, then at least one of $\alpha(\sigma), \gamma(\sigma)$ is equal to one, and $\alpha(\sigma)\cdot\gamma(\sigma)=\omega(\sigma)\in \BF_p^\times$, hence both $\alpha(\sigma)$ and $\gamma(\sigma)$ belong to $\BF_p^\times$.
    \item If $\sigma\in \Gal(L/K(\mu_p))\simeq \Gal(L_0/K)$, since $\Gal(L_0/K)$ has $p$-power order, $\alpha(\sigma)^{p^m}=1=\gamma(\sigma)^{p^m}\in \ov{\BF}_p^\times$ for some integer $m$, which implies that $\alpha(\sigma)=1=\gamma(\sigma)\in {\BF}_p^\times$.
\end{enumerate}
Consequently, for any $\sigma\in \Gal(L/K)$, the eigenvalues $\alpha(\sigma), \gamma(\sigma)$ belong to $\BF_p^\times$, contradicting the assumption that $\ov{\rho}_E|_{G_K}$ is irreducible. We complete the proof.
\end{proof}

\begin{lem}
    If the residual representation $\ov{\rho}_E|_{G_K}$ is irreducible, then 
    $\mathcal{X}^{\Sigma}_{\mathcal{F}_\ord}(E/K_\infty)$ is a torsion $\Lambda_K$-module.
\end{lem}
\begin{proof}
    By \cite[Theorem 17.4 (1)]{Kato}, both $\mathcal{X}_{\mathcal{F}_{\ord}}^{\Sigma}(E/\mathbb{Q}_\infty)$ and $\mathcal{X}_{\mathcal{F}_{\ord}}^{\Sigma}(E^{K}/\mathbb{Q}_\infty)$ are  torsion $\Lambda_\BQ$-modules. Consequently, Lemma \ref{sum} implies that $\mathcal{X}_{\mathcal{F}_{\ord}}^{\Sigma}(E/{K}_\infty^+)$ is  a torsion $\Lambda_K^+$-module. Therefore, by \cite[Proposition 3.9]{skinner2014iwasawa}, Lemma \ref{trivial}, and Remark \ref{coin}, we conclude that $\mathcal{X}^{\Sigma}_{\mathcal{F}_\ord}(E/K_\infty)$ is a torsion $\Lambda_K$-module.
\end{proof}

Together with Lemma \ref{loc}, we have the following corollary.

\begin{cor}\label{impr}
    $\CX_{\CF_\ord}(E/K_\infty)$ is $\Lambda_K$-torsion, and
    \[\Char_{\Lambda_K}(\CX^{\Sigma}_{\CF_\ord}(E/K_\infty))\supset\Char_{\Lambda_K}(\CX_{\CF_\ord}(E/K_\infty))\prod_{\fq\in\Sigma-\{\fp,\bar{\fp}\}}(P_\fq(\varepsilon_K(\Frob_\fq^{-1}))).\]
\end{cor}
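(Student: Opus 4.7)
The plan is to derive this as a routine consequence of comparing primitive with imprimitive Selmer groups, fed by the preceding lemma (giving $\Lambda_K$-torsion of $\CX^{\Sigma}_\ord$) and by Lemma~\ref{loc}.

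First, from the definitions of $H^1_{\CF_\ord}$ and $H^1_{\CF^\Sigma_\ord}$ as kernels of restriction maps, one has an exact sequence of $\Lambda_K$-modules
\[
0 \to H^1_{\CF_\ord}(K, M) \to H^1_{\CF^\Sigma_\ord}(K, M) \to \bigoplus_{\fq\in\Sigma,\ \fq\nmid p} H^1(K_\fq, M),
\]
where $M=T_pE\otimes\Lambda_K^\vee$: the imprimitive Selmer group imposes no condition at $\fq\nmid p$, while the primitive one requires classes to vanish in $H^1(K_\fq,M)$ there. At the $p$-adic places $\fp,\bar{\fp}$ the conditions coincide, so these primes contribute nothing.

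Second, I would identify the local factors. Since $G_{K_\fq}/I_\fq\cong\wh{\BZ}$ has cohomological dimension one, the inflation-restriction sequence combined with Lemma~\ref{loc}(1) yields an isomorphism
\[
H^1(K_\fq, M) \xrightarrow{\sim} H^1(I_\fq, M)^{G_{K_\fq}},
\]
and then Lemma~\ref{loc}(2) gives
\[
\Char_{\Lambda_K}\!\bigl(H^1(K_\fq, M)^\vee\bigr) = \bigl(P_\fq(\varepsilon_K(\Frob_\fq^{-1}))\bigr).
\]

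Third, I would take Pontryagin duals. The exact sequence above produces a surjection $\CX^{\Sigma}_\ord(E/K_\infty) \twoheadrightarrow \CX_\ord(E/K_\infty)$ whose kernel is a quotient of $\bigoplus_{\fq\in\Sigma-\{\fp,\bar{\fp}\}} H^1(K_\fq, M)^\vee$. Because the preceding lemma asserts $\CX^{\Sigma}_\ord$ is $\Lambda_K$-torsion, so is its quotient $\CX_\ord$, proving the first assertion. By multiplicativity of characteristic ideals in a short exact sequence of torsion modules, and the fact that characteristic ideals behave well under quotients, the characteristic ideal of the kernel divides $\prod_{\fq\in\Sigma-\{\fp,\bar{\fp}\}} (P_\fq(\varepsilon_K(\Frob_\fq^{-1})))$, which translates to the displayed containment $\supset$ (recalling that containment of ideals reverses divisibility of generators).

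There is no substantive obstacle: the corollary is a bookkeeping consequence of Lemma~\ref{loc} together with the preceding lemma. The only point requiring care is that the map to the local terms is not necessarily surjective, so one only gets a \emph{containment} $\supset$ of characteristic ideals rather than an equality, which is precisely what the corollary claims.
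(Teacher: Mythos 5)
Your proof is correct and spells out exactly the argument the paper leaves implicit (the paper simply deduces the corollary from the preceding torsion lemma and Lemma~\ref{loc} without writing the details): the defining exact sequence comparing primitive and imprimitive Selmer groups, the identification $H^1(K_\fq,M)\cong H^1(I_\fq,M)^{G_{K_\fq}}$ via Lemma~\ref{loc}(1) and inflation-restriction, dualizing to get a surjection $\CX^{\Sigma}_\ord\twoheadrightarrow\CX_\ord$ with kernel a quotient of $\bigoplus_{\fq}H^1(K_\fq,M)^\vee$, and multiplicativity of characteristic ideals together with Lemma~\ref{loc}(2). The observation that the localization map need not be surjective, giving only a containment, is exactly the right point of care.
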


\begin{proof}
    By definition, we have the following exact sequence,
    \[0\ra H^1_{\mathcal{F}_{\ord}}(K,T_pE\otimes\Lambda_K^\vee)\ra H^1_{\mathcal{F}_{\ord}^{\Sigma}}(K,T_pE\otimes\Lambda_K^\vee)\ra \bigoplus_{\fq\in\Sigma-\{\fp,\bar{\fp}\}}H^1(G_{K_\mathfrak{q}},T_pE\otimes\Lambda_K^\vee).\]
  Taking Pontryagin duals yields the exact sequence,
    \[ \bigoplus_{\fq\in\Sigma-\{\fp,\bar{\fp}\}}H^1(G_{K_\mathfrak{q}},T_pE\otimes\Lambda_K^\vee)^\vee\ra \CX_{\CF_\ord^\Sigma}(E/K_\infty)\ra \CX_{\CF_\ord}(E/K_\infty) \ra 0.\]
  Hence we have
    \[\Char_{\Lambda_K}(\CX^{\Sigma}_{\CF_\ord}(E/K_\infty))\supset\Char_{\Lambda_K}(\CX_{\CF_\ord}(E/K_\infty))\prod_{\fq\in\Sigma-\{\fp,\bar{\fp}\}}\Char_{\Lambda_K}(H^1(G_{K_\mathfrak{q}},T_pE\otimes\Lambda_K^\vee)^\vee).\]
    For every $\fq\nmid p$, by the inflation-restriction sequence and the fact that $G_{K_\mathfrak{q}}/I_\mathfrak{q}=\langle\operatorname{Frob}_\mathfrak{q}\rangle\simeq\widehat{\mathbb{Z}}$ has cohomological dimension $1$, we have the following exact sequence
    \[0\to H^1(G_{K_\mathfrak{q}}/I_\mathfrak{q},(T_pE\otimes\Lambda_K^\vee)^{I_\mathfrak{q}})\to H^1(G_{K_\mathfrak{q}},T_pE\otimes\Lambda_K^\vee)\to H^1(I_\mathfrak{q},T_pE\otimes\Lambda_K^\vee)^{G_{K_\mathfrak{q}}}\to 0.\]
    Therefore by Lemma \ref{loc}, we have
    \[\Char_{\Lambda_K}(H^1(G_{K_\mathfrak{q}},(T_pE\otimes\Lambda_K^\vee))^\vee)=\Char_{\Lambda_K}((H^1(I_\mathfrak{q},T_pE\otimes\Lambda_K^\vee)^{G_{K_\mathfrak{q}}})^\vee)=P_\fq(\varepsilon_K(\Frob_\fq^{-1})),\]
    which completes the proof.
\end{proof}

\subsection{Selmer groups for Hida families}\label{Hida}
Let $W$ be an indeterminate and $\Lambda_W=\mathbb{Z}_p[[W]]$. We identify $\Lambda_W$ with $\Lambda_K^+$ by mapping the topological generator $\gamma^+$ to $1+W$, which induces the character $\varepsilon_W:G_K\rightarrow \Lambda_W^\times$.

Let $L\subset\bar{\BQ}_p$ be a finite extension of $\BQ_p$ with ring of integers $\CO$. Let $\BI$ be a local reduced finite integral extension of $\Lambda_{W,\CO}:=\Lambda_W\otimes_{\BZ_p}\CO$.
Recall that a continuous $\CO$-algebra homomorphism $\phi\in \Hom_{\text{cont }\CO\text{-alg}}(\BI,\bar{\BQ}_p)$ is said to be arithmetic if it satisfies the condition $\phi(1+W)=\zeta_\phi(1+p)^{k_\phi-2}$ for some $p$-power root of unity $\zeta_\phi$ and some integer $k_\phi$. Let $t_\phi>0$ be the integer such that $\zeta_\phi$ is a primitive $p^{t_\phi-1}$-th root of unity, and let $\chi_\phi:\BA_{\BQ}^\times/\BQ^{\times}\rightarrow\mu_{p^\infty}$ be the unique character that has a $p$-power conductor and satisfies $\chi_{\phi,p}(1+p)=\zeta_\phi^{-1}$.
We define the set of arithmetic points with weight $k_\phi \ge 2$ as
$$\fX_{\BI,\CO}^a =\left\{ \phi\in \Hom_{\text{cont }\CO\text{-alg}}(\BI,\bar{\BQ}_p): \phi \text{ is arithmetic }, k_\phi\geq 2\right\}.$$

Let $\mathbf{f}$ be an $\BI$-adic ordinary eigenform of tame level $N$ and nebentypus ${\chi}_{\mathbf{f}} = 1$, namely a formal $q$-expansion $\mathbf{f}=\sum_{n=1}\mathbf{a}_n(\mathbf{f})q^n\in\BI[[q]]$ with the property that for any arithmetic point $\phi\in\fX_{\BI,\CO}^a$, the corresponding specialization
$$\mathbf{f}_{\phi}=\sum_{n=1}\phi(\mathbf{a}_n(\mathbf{f}))q^n$$
is an ordinary modular form in the space $M_{k_\phi}(Np^{t_\phi},\omega^{k_\phi-2}\chi_\phi;\phi(\BI))$. Here, $\omega$ denotes the Teichmüller character.

We assume that $\mathbf{f}$ satisfies the following irreducibility condition:
\begin{equation}\tag{$\irred_\mathbf{f}$}
    \text{the residue representation of }\rho_{\mathbf{f}_\phi} \text{ is irreducible for some (hence all) }\phi\in \fX_{\BI,\CO}^a.
\end{equation}
Under this condition, there exists a continuous, $\BI$-linear Galois representation $\rho_{\mathbf{f}}:G_\BQ\rightarrow \GL_{\BI}(T_{\mathbf{f}})$, where the representation space $T_{\mathbf{f}}$ is a free $\BI$-module of rank two. This representation is unramified at all primes $\ell\nmid Np$ and is uniquely characterized by the relations:
$$\tr\rho_{\mathbf{f}}(\Frob_\ell)=a_\ell{(\mathbf{f})},\quad \ell\nmid Np,$$
and
$$\det(\rho_{\mathbf{f}})=\epsilon\varepsilon_W,$$
where $\epsilon$ is the $p$-adic cyclotomic character.

Let $F_\BI$ be the field of fractions of the integral domain $\BI$, and let $V_\mathbf{f}:=T_\mathbf{f}\otimes_\BI F_\BI$. Since $\mathbf{f}$ is ordinary,  there exists a one-dimensional $F_\BI$-subspace $V_\mathbf{f}^+\subset V_\mathbf{f}$ that is stable under the action of $G_{\BQ_p}$. Furthermore, $G_{\BQ_p}$ acts on the quotient space $V_\mathbf{f}^-:=V_\mathbf{f}/V_\mathbf{f}^+$ via an unramified character $\delta_{\mathbf{f}}$, which is characterized by the condition $\delta_{\mathbf{f}}(\Frob_p)=a_p(\mathbf{f})$. Since the nebentypus $\chi_{\mathbf{f}}=1$, $\mathbf{f}$ satisfies the following condition given in \cite{skinner2014iwasawa}:
\begin{equation}\tag{$\text{dist}_\mathbf{f}$}
    \delta_{\mathbf{f}}^2\not\equiv\chi_{\mathbf{f}}\omega \mod \fm_\BI,
\end{equation}
where $\fm_\BI$ is the maximal ideal of $\BI$. Hence, by Nakayama's lemma, the intersection $T_\mathbf{f}^+:=T_\mathbf{f}\cap V_\mathbf{f}^+$ is then a rank-one free $\BI$-summand of $T_\mathbf{f}$. Consequently, the quotient module $T_\mathbf{f}^-:=T_\mathbf{f}/T_\mathbf{f}^+$ is also a free $\BI$-module of rank one.

Let $E/\BQ$ be an elliptic curve with good ordinary reduction at $p$. By \cite[Corollary 1.5]{Hida86}, there exist  an $\BI$-adic ordinary eigenform  $\mathbf{f}$ with trivial nebentypus and a weight 2 arithmetic specialization $\phi\in\fX_{\BI,\CO}^a$  (i.e., $\phi(1+W)=1$) such that   the specialized form $\phi(\mathbf{f})$ is the $p$-stabilization of the newform associated to $E$.

\begin{cor}\label{Hida_des}
    Let $\fp_\phi:=\ker\phi$. Assume that the residual representation $\ov{\rho}_E|_{G_K}:G_K\ra \Aut(E[p])$ is irreducible. Then there is a natural isomorphism:
    $$\CX^{\Sigma}_{\CF_\ord}(\mathbf{f}/K_\infty)/\fp_\phi \CX^{\Sigma}_{\CF_\ord}(\mathbf{f}/K_\infty)\otimes_{\BI,\phi}\CO\simeq \CX^{\Sigma}_{\CF_\ord}(E/K_\infty).$$
\end{cor}

\begin{proof}

     Since $\mathbf{f}$ is an $\BI$-adic ordinary eigenform of trivial nebentypus, recall that there exists a free $\BI$-submodule $T_\mathbf{f}^+\subset T_\mathbf{f}$ of rank one such that the quotient module $T_\mathbf{f}^-:=T_\mathbf{f}/T_\mathbf{f}^+$ is also free of rank one, and the inertia group $I_p$ acts trivially on $T_\mathbf{f}^-$. By Lemma \ref{trivial} and Nakayama's lemma, there is no nontrivial $\BI_K$-subquotient of $T_\mathbf{f}^\vee$ on which $G_{K_\infty}$ acts trivially. 

     Consequently, the desired result follows from \cite[Proposition 3.7]{skinner2014iwasawa} and Remark \ref{coin}. 
   
 \end{proof}

\section{\texorpdfstring{$p$-adic $L$-functions}{p-adic L-functions}}

\subsection{\texorpdfstring{A three-variable \(p\)-adic \(L\)-function}{A three-variable p-adic L-function}}
Let \(\mathbf{f}\) be an \(\BI\)-adic ordinary eigenform of tame level \(N\) and trivial nebentypus as described in Section \ref{Hida}. Suppose \(L\) contains \(\BQ[\mu_{Np}, i, D_K^{1/2}]\).

Define
\[
\fX^a_{\BI_K,\CO}:=\{\phi\in\Hom_{\text{cont }\CO\text{-alg}}(\BI_K,\bar{\BQ}_p): \phi|_{\BI}\in\fX^a_{\BI,\CO}, \phi(\gamma^+)=\zeta_+(1+p)^{k_{\phi|_{\BI}}-2}, \phi(\gamma^-)=\zeta_-\},
\]
where \(\zeta_\pm\) are \(p\)-power roots of unity. For each \(\phi\in\fX^a_{\BI_K,\CO}\), let \(k_\phi\), \(t_\phi\), and \(\chi_\phi\) be the corresponding data associated with \(\phi|_{\BI}\). Define
\[
\xi_\phi:=\phi\circ(\varepsilon_K/\varepsilon_W),\quad \theta_\phi:=\omega^{2-k_\phi}\chi_\phi^{-1}\xi_\phi.
\]
These are finite-order idele class characters of \(\BA_K^\times\). For an idele class character \(\psi\), denote its conductor by \(\ff_\psi\). Set
\[
\fX'_{\BI_K,\CO}:=\{\phi\in\fX^a_{\BI_K,\CO}: p\mid\ff_{\xi_\phi}, p^{t_\phi}\mid\Nm(\ff_{\xi_\phi}), p\mid\ff_{\theta_\phi}\}.
\]

\begin{thm}
Let \(\Sigma\) be a finite set of primes containing all primes dividing \(pND_K\). Let \(\mathbf{f}\) be an \(\BI\)-adic ordinary eigenform of tame level \(N\) and trivial nebentypus. Assume \(\mathbf{f}\) satisfies \((\irred_{\mathbf{f}})\). Then there exists \(\CL^\Sigma_{\mathbf{f},K}\in\BI_K\) such that for every \(\phi\in\fX'_{\BI_K,\CO}\),
\begin{align*}
\CL^\Sigma_{\mathbf{f},K}(\phi)=&u_{\mathbf{f}_\phi}a_p(\mathbf{f}_\phi)^{-\ord_p(\Nm(\ff_{\theta_\phi}))}\\
&\quad\times\frac{((k_\phi-2)!)^2\fg(\theta_\phi^{-1})\Nm(\ff_{\theta_\phi}\delta_K)^{k_\phi-2}L^\Sigma(\mathbf{f}_\phi/K,{\theta_\phi^{-1}},{\theta_\phi}-1)}{(-2\pi i)^{2k_\phi-2}\Omega^+_{\mathbf{f}_\phi}\Omega^-_{\mathbf{f}_\phi}},
\end{align*}
where \(u_{\mathbf{f}_\phi}\) is a \(p\)-adic unit depending only on \(\mathbf{f}_\phi\), \(\fg(\theta_\phi^{-1})\) is the global Gauss sum, \(\delta_K\) is the differential ideal, \(\Omega^\pm_{\mathbf{f}_\phi}\) are the canonical periods associated to \(\mathbf{f}_\phi\), and 
\[L^\Sigma(\mathbf{f}_\phi/K,{\theta_\phi^{-1}},s):=L^\Sigma(\mathbf{f}_\phi/K,{\theta_\phi^{-1}},s)\cdot\prod_{\fq \in \Sigma-\{\infty\}}L_{\fq}(\mathbf{f}_\phi/K,{\theta_\phi^{-1}},s)^{-1}\]
with 
$$L_{\fq}(\mathbf{f}_\phi/K,{\theta_\phi^{-1}},s):=\det\left(1-\operatorname{Nm}(\mathfrak{q})^{-s}\cdot\operatorname{Frob}_\mathfrak{q}\Bigm| \left(V_{\phi,\ell}|_{G_K}\otimes\theta_\phi^{-1}\right)^{I_\mathfrak{q}}\right)^{-1}$$
being the local Euler factor at $\fq$,
here $\ell\notin \Sigma$ is a prime number and $V_{\phi,\ell}$ is the $\ell$-adic Galois representation associated to $\mathbf{f}_\phi$ (in the manner as Section \ref{Hida}).
\end{thm}
\begin{proof}
    See \cite[Section 3.4.5]{skinner2014iwasawa}.
\end{proof}

\begin{remark}
Note that we adopt the arithmetic Frobenius normalization for the reciprocity map of class field theory, whereas \cite{skinner2014iwasawa} employs the geometric Frobenius convention.
\end{remark}

\subsection{\texorpdfstring{Two variable $p$-adic $L$-functions: type I} {Two variable p-adic L-functions: type I}}

Let \(f=\sum_n a_n q^n \in S_2(\Gamma_0(N))\) be a newform with \(p\nmid a_p\), and let \(c_f \in \BZ_p\) be the congruence number associated with \(f\) as defined in \cite[Section 7]{hida81} or \cite{rib83}.

\begin{thm}
There exists an element \(\CL_p^I(f/K)\in c_f^{-1}\Lambda_K\) such that for every finite-order nontrivial character \(\xi\) of \(\Gamma_K\) of conductor $\fp^m\ov{\fp}^n$ with $m + n > 0$, we have
\[
\CL_p^I(f/K)(\xi)=W(\xi)p^{\ord_p(\Nm(\ff_\xi))/2}\alpha_p^{-\ord_p(\Nm(\ff_\xi))}\left(1-\frac{p}{\alpha_p^2}\right)^{-1}\left(1-\frac{1}{\alpha_p^2}\right)^{-1}\frac{L(f/K,\xi^{-1},1)}{8\pi^2\langle f,f\rangle},
\]
where \(\alpha_p\) is the \(p\)-adic unit root of the polynomial \(x^2 - a_p x + p\), \(\langle f,g\rangle=\int_{\Gamma_0(N)\backslash\CH}\overline{f(\tau)}g(\tau)d\tau\) denotes the Petersson inner product on \(S_2(\Gamma_1(N))\), and \(W(\xi)\) is the Artin root number.
\end{thm}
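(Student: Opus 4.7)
The plan is to construct $\CL_p^I(f/K)$ by specializing the three-variable $p$-adic $L$-function $\CL^\Sigma_{\mathbf{f},K}$ from the previous subsection to a Hida family through $f$, and then restoring the tame Euler factors and converting the period normalization from $\Omega_f^+\Omega_f^-$ to $8\pi^2\pair{f,f}$.

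First I would invoke Hida theory: since $p\nmid a_p(f)$ and $\bar\rho_f$ is irreducible, there is a primitive ordinary family $\mathbf{f}$ of tame level $N$ passing through $f$, with a distinguished arithmetic specialization $\phi_0\in\fX^a_{\BI,\CO}$ satisfying $k_{\phi_0}=2$, $\chi_{\phi_0}=1$, $\zeta_{\phi_0}=1$, and $\mathbf{f}_{\phi_0}$ equal to the ordinary $p$-stabilization of $f$. Restricting $\CL^\Sigma_{\mathbf{f},K}$ along $\phi_0|_\BI$ produces an element of $\phi_0(\BI)[[\Gamma_K]]$ whose value at a finite-order $\xi$ of $\Gamma_K$ is the $\Sigma$-imprimitive twisted central value $L^\Sigma(f/K,\xi^{-1},1)/(\Omega_f^+\Omega_f^-)$ multiplied by the Skinner--Urban $p$-local factors. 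Multiplying by $\prod_{\fq\in\Sigma\setminus\{\fp,\bar\fp\}}P_\fq(\varepsilon_K(\Frob_\fq^{-1}))^{-1}\in\Lambda_K\otimes\BQ_p$ restores the primitive value $L(f/K,\xi^{-1},1)$, while the Hida--Ribet congruence-number formula $\Omega_f^+\Omega_f^-\sim c_f\cdot 8\pi^2\pair{f,f}$ (equality up to a $p$-adic unit) converts the denominator and accounts for the factor $c_f^{-1}$ in the target ring $c_f^{-1}\Lambda_K$.

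Second I would match the $p$-part of the interpolation factor. Under $\phi_0$ the Skinner--Urban character $\theta_\phi=\chi_\phi^{-1}\xi$ becomes $\xi$ itself, and the global Gauss sum $\fg(\theta_\phi^{-1})$ factors across places to produce the global root number $W(\xi)$ together with the power $p^{\ord_p(\Nm(\ff_\xi))/2}$ coming from the local Gauss sums at $\fp$ and $\bar\fp$. The factor $a_p(\mathbf{f}_{\phi_0})^{-\ord_p(\Nm(\ff_{\theta_\phi}))}$ collapses to $\alpha_p^{-\ord_p(\Nm(\ff_\xi))}$, while the two remaining modifications $(1-p/\alpha_p^2)^{-1}=(1-\beta_p/\alpha_p)^{-1}$ and $(1-1/\alpha_p^2)^{-1}=(1-\beta_p/(p\alpha_p))^{-1}$, with $\beta_p=p/\alpha_p$ the non-unit root of $x^2-a_px+p$, are precisely the Euler factors at $\fp$ and $\bar\fp$ that the $\Sigma$-imprimitive formula omits when $\xi$ is unramified at the corresponding prime, and are reinserted as part of the completed local $L$-factors.

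The hard part will be the split-prime bookkeeping: because $p=\fp\bar\fp$ splits in $K$, one must separately track the $\fp$- and $\bar\fp$-components of the conductor $\ff_\xi$, of the root number $W(\xi)$, and of the $\alpha_p$-power, and the translation from the Skinner--Urban $U(2,2)$-Eisenstein normalization to the Rankin--Selberg-style normalization producing $8\pi^2\pair{f,f}$ in the denominator requires careful tracking of measures, volumes, and local periods. Integrality in $c_f^{-1}\Lambda_K$ should then follow from Hida's control theorem applied to the $\BI$-adic modular symbol attached to $\mathbf{f}$, combined with the characterization of $\Omega_f^\pm$ that places $\phi_f^\pm/\Omega_f^\pm$ into $c_f^{-1}$ times an integral cohomology lattice.
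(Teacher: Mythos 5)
The paper does not prove this theorem: it is quoted verbatim from CGS, Theorem 1.2.1, where $\CL_p^I(f/K)$ is constructed directly as a $p$-adic Rankin--Selberg convolution (Hida/Perrin-Riou style, pairing a CM Hida family of theta series for $K$ against $f$), and the integrality $\CL_p^I(f/K)\in c_f^{-1}\Lambda_K$ is established within that direct construction. Your proposal to instead obtain it by specializing Skinner--Urban's $\CL^\Sigma_{\mathbf{f},K}$ is a genuinely different route, and it does not close.

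The decisive gap is integrality. The specialization of $\CL^\Sigma_{\mathbf{f},K}$ along $\phi_0$ lives in $\phi_0(\BI)[[\Gamma_K]]$, but you then divide by $\prod_{\fq\in\Sigma\setminus\{\fp,\bar\fp\}}P_\fq(\varepsilon_K(\Frob_\fq^{-1}))$ to pass from the $\Sigma$-imprimitive to the primitive $L$-value. These elements are not units of $\Lambda_K$ (each cuts out a nonempty divisor), so the quotient a priori lies only in $\Frac(\Lambda_K)$, and the statement $\CL_p^I(f/K)\in c_f^{-1}\Lambda_K$ --- the substantive content of the theorem --- cannot be read off. This is exactly why the paper's Proposition \ref{imp0} is phrased in the opposite direction: it takes the already-constructed $\CL_p^\mathrm{PR}(E/K)$ and multiplies by the tame Euler factors to match $\CL^\Sigma_{\mathbf{f},K}(\phi_0\otimes\id)$; your proposal would make that comparison circular. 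You also misread the factors $(1-p/\alpha_p^2)^{-1}(1-1/\alpha_p^2)^{-1}$. They are not Euler factors at $\fp$, $\bar\fp$ that get reinserted when $\xi$ is unramified: they appear uniformly for \emph{all} nontrivial finite-order $\xi$ (including $\xi$ ramified at both $\fp$ and $\bar\fp$, where no $p$-Euler factor is dropped) and carry no dependence on $\xi$. Writing $\beta_p=p/\alpha_p$, they are $(1-\beta_p/\alpha_p)^{-1}(1-\beta_p/(p\alpha_p))^{-1}$, the standard factor relating the Petersson norm of $f$ on $\Gamma_0(N)$ to that of its ordinary $p$-stabilization on $\Gamma_0(Np)$; they are a by-product of normalizing by $8\pi^2\langle f,f\rangle$ rather than $\Omega_f^+\Omega_f^-$, built into the direct Rankin--Selberg construction, not an Euler-factor correction.
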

\begin{proof}
    This is a reformulation of \cite[Theorem 1.1]{PR86} following  Hida's $p$-adic Rankin method \cite{Hida85},  as given in \cite[Theorem 2.2.1]{CGS}.
\end{proof}

Let \(E/\BQ\) be an elliptic curve with good ordinary reduction at the prime \(p\). Let \(f_E\in S_2(\Gamma_0(N))\) denote the newform associated with \(E\), and let \(\pi_E: X_0(N)\rightarrow E\) be a modular parametrization.

\begin{defn}[Perrin-Riou's \(p\)-adic \(L\)-function]
Define Perrin-Riou's \(p\)-adic \(L\)-function as
\[
\CL_p^{\mathrm{PR}}(E/K):=\left(1-\frac{p}{\alpha_p^2}\right)\left(1-\frac{1}{\alpha_p^2}\right)\cdot\frac{\deg(\pi_E)}{c_E^2}\cdot \CL_p^I(f_E/K)\in \Lambda_K,
\]
where \(c_E\) denotes the Manin constant.
\end{defn}

Let \(\mathbf{f}\) be an \(\BI\)-adic ordinary eigenform and \(\phi\in\fX_{\BI,\CO}^a\) as in Section \ref{Hida}, with \(\phi(1+W)=1\) and \(\mathbf{f}_\phi\) equal to the \(p\)-stabilization of \(f_E\).

\begin{prop}\label{imp0}
Assume that the residual representation \(\bar{\rho}_E:G_{\BQ}\rightarrow \Aut(E[p])\) is irreducible. Then
\[
\CL^\Sigma_{\mathbf{f},K}(\phi\otimes\id)=\alpha\cdot\prod_{\fq\in\Sigma,\fq\nmid p}P_\fq(\varepsilon_K(\Frob_\fq^{-1}))\cdot\CL_p^{\mathrm{PR}}(E/K),
\]
where
\[
P_\fq(X)=\det(1-\Nm(\fq)^{-1}X\cdot\Frob_\fq\mid V_pE^{I_\fq})
\]
is the local Euler factor, \(\alpha\) is a \(p\)-adic unit independent of \(\xi\), and
\[
\phi\otimes\id:\BI_K=\BI\hat{\otimes}\Lambda_K\rightarrow\phi(\BI)\otimes\Lambda_K
\]
is the natural homomorphism.
\end{prop}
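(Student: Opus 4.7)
The plan is to verify the identity in $\phi(\BI)\otimes\Lambda_K\cong\phi(\BI)[[\Gamma_K]]$ by comparing both sides at a Zariski-dense family of specializations of the $\Gamma_K$-variable. Concretely, I would evaluate at finite order characters $\xi:\Gamma_K\to\mu_{p^\infty}$ whose conductor is divisible by $p$; such $\xi$ form a dense subset, so it suffices to show that both sides agree at every such $\xi$ up to a single unit in $\phi(\BI)\otimes\Lambda_K$.

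For any such $\xi$, the first task is to check that $\phi\otimes\xi\in\fX'_{\BI_K,\CO'}$. Since $\phi(1+W)=1$ forces $k_\phi=2$, $\zeta_\phi=1$, $t_\phi=1$ and $\chi_\phi=1$, one computes $\xi_{\phi\otimes\xi}=\theta_{\phi\otimes\xi}=\xi$, so the conductor conditions defining $\fX'$ are automatic. Substituting into the Skinner--Urban interpolation formula then yields
\[
\CL^{\Sigma}_{\mathbf{f},K}(\phi\otimes\xi)\;=\;u_{\mathbf{f}_\phi}\cdot\alpha_p^{-\ord_p\Nm(\ff_\xi)}\cdot\frac{\fg(\xi^{-1})\,L^{\Sigma}(f_E/K,\xi^{-1},1)}{(-2\pi i)^{2}\,\Omega^+_{\mathbf{f}_\phi}\,\Omega^-_{\mathbf{f}_\phi}},
\]
using that $\mathbf{f}_\phi$ is the $p$-stabilization of $f_E$ (so $a_p(\mathbf{f}_\phi)=\alpha_p$) and that the ramified twist $L(\mathbf{f}_\phi/K,\xi^{-1},1)=L(f_E/K,\xi^{-1},1)$ because $\xi$ is ramified at both primes above $p$. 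In parallel, expanding $\CL_p^{\mathrm{PR}}(E/K)(\xi)$ via the definition and the interpolation of $\CL_p^I(f_E/K)$ gives an expression in $W(\xi)$, $p^{\ord_p\Nm(\ff_\xi)/2}$, $\alpha_p^{-\ord_p\Nm(\ff_\xi)}$, $\deg(\pi_E)/c_E^2$, and $L(f_E/K,\xi^{-1},1)/(8\pi^2\langle f_E,f_E\rangle)$, with the Euler-factor corrections $(1-p/\alpha_p^2)(1-1/\alpha_p^2)$ cancelling against those built into $\CL_p^I$.

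Next, the ratio $L^{\Sigma}/L$ equals $\prod_{\fq\in\Sigma,\fq\nmid p\infty}L_\fq(f_E/K,\xi^{-1},1)^{-1}$; for $\fq\nmid p$ the local factor $L_\fq(f_E/K,\xi^{-1},1)^{-1}$ coincides with $P_\fq(\xi(\Frob_\fq^{-1}))$, which is precisely the value at $\xi$ of $P_\fq(\varepsilon_K(\Frob_\fq^{-1}))\in\Lambda_K$ established in Lemma~\ref{loc}. After this identification the comparison reduces to the product of two scalars: the Gauss-sum--root-number ratio $\fg(\xi^{-1})/(W(\xi)p^{\ord_p\Nm(\ff_\xi)/2})$ (a $p$-adic unit with the $\xi$-variation absorbed into $\Lambda_K^\times$, by the standard identity $|\fg(\xi^{-1})|^2=\Nm(\ff_\xi)$), and the period ratio
\[
\frac{(-2\pi i)^{2}\,\Omega^+_{\mathbf{f}_\phi}\,\Omega^-_{\mathbf{f}_\phi}}{8\pi^2\,\langle f_E,f_E\rangle\cdot(\deg(\pi_E)/c_E^2)},
\]
which is independent of $\xi$.

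The main obstacle is showing that this period ratio is a $p$-adic unit. This is a Hida-style formula: under the irreducibility hypothesis on $\bar{\rho}_E$, Hida's control theorem for congruence modules identifies the $\phi$-congruence ideal of $\mathbf{f}$ with the congruence number $c_E$ (up to units), and combining this with Hida's relation between canonical periods and the Petersson norm yields $\Omega^+_{\mathbf{f}_\phi}\Omega^-_{\mathbf{f}_\phi}\sim 8\pi^2\,\langle f_E,f_E\rangle\cdot c_E^2/\deg(\pi_E)$ up to $p$-adic units. Once this period comparison is established, the two sides agree at every $\xi$ in our dense family up to a single unit $\alpha\in(\phi(\BI)\otimes\Lambda_K)^\times$, and Zariski density completes the proof. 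The irreducibility of $\bar{\rho}_E$ is used \emph{only} through this period comparison.
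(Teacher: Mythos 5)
Your overall strategy matches the paper's: evaluate both $p$-adic $L$-functions at a dense family of specializations $\phi\otimes\xi$ with $\xi$ a finite-order character of $p$-power conductor, compare the two interpolation formulas, recognize the ratio $L^\Sigma/L$ as the product of local Euler factors $P_\fq$ from Lemma~\ref{loc}, and reduce to a comparison of Gauss-sum/root-number factors and of periods. Your period discussion, while routed through Hida's congruence-module control theorem, is in the same circle of ideas as the paper's direct citations (Mazur's theorem that the Manin constant $c_E$ is a $p$-adic unit, the identification $\deg(\pi_E)\sim c_{f_E}$ from Castella--Grossi--Skinner, and the canonical-period formula $\pair{f_E,f_E}/c_{f_E} = i(2\pi i)^2\Omega^+_{f_E}\Omega^-_{f_E}$ from Skinner); note also that you write ``congruence number $c_E$'' where you mean $c_{f_E}$, as $c_E$ is reserved for the Manin constant.

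The genuine gap is in your handling of the Gauss-sum/root-number ratio $\fg(\xi^{-1})/\bigl(W(\xi)\,p^{\ord_p\Nm(\ff_\xi)/2}\bigr)$. You assert this is ``a $p$-adic unit with the $\xi$-variation absorbed into $\Lambda_K^\times$, by the standard identity $|\fg(\xi^{-1})|^2=\Nm(\ff_\xi)$.'' The archimedean modulus identity controls neither the $p$-adic valuation of the Gauss sum nor, crucially, the algebraic value of the ratio, and it certainly cannot show the ratio is $\xi$-\emph{independent}. Since the proposition asserts an equality in $\Lambda_K$ up to a single scalar $\alpha$ not depending on $\xi$, you cannot afford a $\xi$-dependent unit here: a factor varying with $\xi$ is not ``absorbed'' into a fixed element of $\Lambda_K^\times$ unless you exhibit that element, and the density argument does not close otherwise. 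The correct input is the exact root-number formula $W(\xi)=\fg(\bar\xi)/\sqrt{\Nm(\ff_\xi)}$; for finite-order $\xi$ one has $\bar\xi=\xi^{-1}$, so the ratio is identically $1$. This is the first line of the paper's proof, and it is what makes the scalar $\alpha$ (coming only from $u_{\mathbf{f}_\phi}$ and the period comparison) genuinely constant.
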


\begin{proof}
We have \(W(\xi)=g(\bar{\xi})/\sqrt{\Nm(\ff_\xi)}\). By \cite[Corollary 4.1]{Mazur}, \(c_E\) is a \(p\)-adic unit. According to \cite[Lemma 4.1.2]{CGS}, \(\deg(\pi_E)=c_{f_E}\) up to a \(p\)-adic unit. Additionally, \cite[Lemma 9.5]{skinner2014indivisibility} shows that
\[
\frac{\langle f_E,f_E\rangle}{c_{f_E}}=i(2\pi i)^2\Omega^+_{f_E}\Omega^-_{f_E}.
\]
These results establish the desired equality.
\end{proof}

\subsection{\texorpdfstring{Cyclotomic $p$-adic $L$-function} {Cyclotomic p-adic L-function}} 

Let \(E/\mathbb{Q}\) be an elliptic curve with good ordinary reduction at the prime \(p\). Choose generators \(\delta^\pm\) of \(H_1(E,\mathbb{Z})^\pm\), and define the \Neron periods \(\Omega_E^\pm\) by
\[
\Omega_E^\pm = \int_{\delta^\pm} \omega_E,
\]
where \(\omega_E\) is a minimal differential on \(E\). We normalize \(\delta^\pm\) so that \(\Omega_E^+ \in \mathbb{R}_{>0}\) and \(\Omega_E^- \in i\mathbb{R}_{>0}\).

Let \(a_p\) be the $p$-th Fourier coefficient of the newform \(f_E\) associated with \(E\), and let \(\alpha_p\) be the \(p\)-adic unit root of \(x^2 - a_p x + p\), as defined previously.

\begin{thm}
There exists an element \(\mathcal{L}_p^{\mathrm{MSD}}(E/\mathbb{Q}) \in \Lambda_{\mathbb{Q}}\) such that for every finite-order character \(\chi\) of \(\Gamma_{\mathbb{Q}}\),
\[
\mathcal{L}_p^{\mathrm{MSD}}(E/\mathbb{Q})(\chi) = \begin{cases}
\frac{p^r}{g(\overline{\chi})\alpha_p^r} \cdot \frac{L(E,\overline{\chi},1)}{\Omega_E^+},\quad & \chi\text{ is of conductor }p^r\neq 1,\\
(1-\alpha_p^{-1})^2 \cdot \frac{L(E,1)}{\Omega_E^+},\quad &\chi=1.
\end{cases}
\]
where \(g(\overline{\chi}) = \sum_{a \bmod p^r} \overline{\chi}(a)e^{2\pi i a/p^r}\) is the Gauss sum.
\end{thm}
\begin{proof}
    See \cite[Theorem 2.1.1]{CGS}.
\end{proof}

Let \(\mathcal{L}_p^{\mathrm{PR}}(E/K)^+ \in \Lambda_{\mathbb{Q}}\simeq \Lambda_K^+\) be the image of \(\mathcal{L}_p^{\mathrm{PR}}(E/K)\) under the map induced by the projection \(\Gamma_K \twoheadrightarrow \Gamma_K^+ \simeq \Gamma_{\mathbb{Q}}\).

\begin{prop}\label{PR-MSD}
We have
\[
\mathcal{L}_p^{\mathrm{PR}}(E/K)^+ = \mathcal{L}_p^{\mathrm{MSD}}(E/\mathbb{Q}) \cdot \mathcal{L}_p^{\mathrm{MSD}}(E^K/\mathbb{Q}),
\]
up to a unit in \(\Lambda_{\mathbb{Q}}^\times\).
\end{prop}
\begin{proof}
    See \cite[Proposition 2.2.4]{CGS}.
\end{proof}
\begin{prop}\label{non1}
    Both $\mathcal{L}_p^{\mathrm{MSD}}(E/\BQ)$ and $\mathcal{L}_p^{\mathrm{PR}}(E/K)$ are non-vanishing.
\end{prop}
\begin{proof}
    According to \cite{rohrlich1984functions}, there exists a finite-order character \(\chi\) of \(\Gamma_{\mathbb{Q}}\) such that $\mathcal{L}_p^{\mathrm{MSD}}(E/\mathbb{Q})(\chi)\neq 0$, which implies $\mathcal{L}_p^{\mathrm{MSD}}(E/\BQ)\neq 0$. Similarly, we have $\mathcal{L}_p^{\mathrm{MSD}}(E^K/\BQ)\neq 0$. By Proposition \ref{PR-MSD}, it follows that $\mathcal{L}_p^{\mathrm{PR}}(E/K)^+\neq 0$, and hence $\mathcal{L}_p^{\mathrm{PR}}(E/K)\neq 0$.
\end{proof}
\subsection{\texorpdfstring{Two variable $p$-adic $L$-functions: type II} {Two variable p-adic L-functions: type II}}

\begin{thm}
There exists an element \(\mathcal{L}_p^{II}(f/K)\in \mathrm{Frac}~\Lambda_K\) such that for every character \(\xi\) of \(\Gamma_K\) crystalline at both \(\mathfrak{p}\) and \(\bar{\mathfrak{p}}\), and of infinity type \((b,a)\) with \(a\leq -1\) and \(b\geq 1\),
\[
\mathcal{L}_p^{II}(f/K)(\xi)=\frac{2^{a-b}i^{b-a-1}\Gamma(b+1)\Gamma(b)N^{a+b+1}}{(2\pi)^{2b+1}\langle\theta_{\xi_b},\theta_{\xi_b}\rangle}\cdot\frac{\mathcal{E}(\xi,f,1)}{(1-\xi^{1-\tau}(\bar{\mathfrak{p}}))(1-p^{-1}\xi^{1-\tau}(\bar{\mathfrak{p}}))}\cdot L(f/K,\xi,1),
\]
where \(\theta_{\xi_b}\) is the theta series associated to the Hecke character \(\xi_b=\xi|\cdot|^{-b}\), \(\tau\) denotes complex conjugation, and
\[
\mathcal{E}(\xi,f,1)=(1-p^{-1}\xi(\bar{\mathfrak{p}})\alpha_p)(1-\xi(\bar{\mathfrak{p}})\alpha_p^{-1})(1-p^{-1}\xi^{-1}(\mathfrak{p})\alpha_p)(1-\xi^{-1}(\mathfrak{p})\alpha_p^{-1}).
\]
\end{thm}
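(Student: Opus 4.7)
The plan is to construct $\CL_p^{II}(f/K)$ via a Rankin--Selberg convolution of $f$ with a family of theta series attached to Hecke characters of $K$, and to $p$-adically interpolate the resulting $L$-values through evaluation of nearly holomorphic modular forms at CM points, following the Katz--Bertolini--Darmon--Prasanna paradigm extended to two variables.

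First I would establish the complex integral representation. For $\xi$ of infinity type $(b,a)$ with $a\leq -1,\,b\geq 1$, the twist $\xi_b:=\xi|\cdot|^{-b}$ has infinity type $(0,a-b)$, and its theta series $\theta_{\xi_b}$ is a holomorphic cusp form on $\GL_2/\BQ$ of weight $b-a+1$. Shimura's Rankin--Selberg integral then expresses $L(f/K,\xi,1)$ as a Petersson-type pairing of $f$ against $\theta_{\xi_b}$ and a suitable Eisenstein series; computing the archimedean zeta integral and normalizing by $\pair{\theta_{\xi_b},\theta_{\xi_b}}$ produces precisely the explicit factor
\[
\frac{2^{a-b}\,i^{b-a-1}\,\Gamma(b+1)\,\Gamma(b)\,N^{a+b+1}}{(2\pi)^{2b+1}\pair{\theta_{\xi_b},\theta_{\xi_b}}}.
\]

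Next, for the $p$-adic interpolation I would use that the CM points on $X_0(N)$ attached to the ring of integers of $K$ have canonical coordinates in the Serre--Tate deformation space, and that the algebraic part of the Rankin--Selberg integral can be realized as the value of a nearly holomorphic, hence $p$-adic, modular form built from $f$ at these CM points. Varying $\xi$ continuously over $\Gamma_K$ corresponds to varying the associated theta measure on the Igusa tower, and yields the desired element of $\Frac\,\Lambda_K$. The fact that one lands in the fraction field rather than $\Lambda_K$ itself is forced by the denominators $(1-\xi^{1-\tau}(\bar{\fp}))(1-p^{-1}\xi^{1-\tau}(\bar{\fp}))$ arising when one removes degenerate boundary contributions of the Rankin--Selberg pairing at the split prime $p=\fp\bar{\fp}$.

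The main obstacle is the local zeta integral at $p$. Because the $p$-adic construction naturally produces a $p$-depleted $L$-value, one must compare this with the complete $L$-value and extract the Euler-type factor $\CE(\xi,f,1)$: the four factors correspond to the two Satake parameters $\alpha_p$ and $\beta_p=p/\alpha_p$ of $f$ at each of the two places $\fp,\bar{\fp}$ above $p$. Pinning down the exact shape
\[
(1-p^{-1}\xi(\bar{\fp})\alpha_p)(1-\xi(\bar{\fp})\alpha_p^{-1})(1-p^{-1}\xi^{-1}(\fp)\alpha_p)(1-\xi^{-1}(\fp)\alpha_p^{-1})
\]
amounts to matching the unit-root projection of $f$ at each $p$-adic place of $K$ against the theta-measure construction, and this explicit local computation is the most delicate step of the verification.
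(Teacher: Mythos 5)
This statement is a \emph{cited} result in the paper: the theorem is quoted verbatim from \cite[Theorem 1.4.1]{CGS} and the paper provides no proof of its own, so there is no internal argument to compare your sketch against. Assessing your proposal on its merits: the general paradigm (Rankin--Selberg integral of $f$ against theta series of Hecke characters, $p$-adic interpolation in families) is the right one, but two points deserve correction.

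First, the presence of the Petersson norm $\pair{\theta_{\xi_b},\theta_{\xi_b}}$, rather than a CM period $\Omega_K$, in the normalization signals that $\CL_p^{II}$ is a Hida/Perrin-Riou style $p$-adic Rankin--Selberg $L$-function, built by applying the ordinary projector to a product of $f$, a Hida family of theta series, and a $\Lambda$-adic Eisenstein series --- not a BDP-style object obtained by evaluating a $p$-adic modular form at CM points in Serre--Tate coordinates. The BDP $L$-function $\CL_p^{\mathrm{BDP}}(f/K)$ is constructed separately in the paper, and the relation between the anticyclotomic specialization of $\CL_p^{\Gr}$ and $\CL_p^{\mathrm{BDP}}$ is a genuine theorem (Proposition \ref{comp}, i.e.\ \cite[Proposition 1.4.5]{CGS}), not a definitional coincidence; conflating the two constructions obscures this.

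Second, and more substantively, your description of the denominator $(1-\xi^{1-\tau}(\bar{\fp}))(1-p^{-1}\xi^{1-\tau}(\bar{\fp}))$ as a ``degenerate boundary contribution'' of the Rankin--Selberg pairing is wrong. Unfolding $\pair{\theta_{\xi_b},\theta_{\xi_b}}$ by the Rankin--Selberg method for $\theta_{\xi_b}\times\theta_{\bar\xi_b}$ expresses it, up to elementary factors, as $h_K\cdot L(\xi^{1-\tau},0)$, the Hecke $L$-value of the anticyclotomic character $\xi^{1-\tau}$. When this is re-expressed through the Katz/de Shalit $p$-adic $L$-function $\CL_\fp(K)$ applied to $\xi^{1-\tau}$, the Euler factors at $\fp$ and $\bar\fp$ that appear in de Shalit's interpolation formula are exactly $(1-\xi^{1-\tau}(\bar{\fp}))(1-p^{-1}\xi^{1-\tau}(\bar{\fp}))$ (using $\xi^{\tau-1}(\fp)=\xi^{1-\tau}(\bar\fp)$). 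These factors therefore encode the $p$-part of the theta period, and this is precisely why $\CL_p^{\Gr}(f/K):=h_K\cdot\CL_\fp(K)'\cdot\CL_p^{II}(f/K)$ is integral while $\CL_p^{II}(f/K)$ itself a priori only lies in $\Frac\Lambda_K$. This bookkeeping is the key structural input to the paper's subsequent use of the BDP comparison and Hsieh's $\mu=0$ theorem, so it is worth getting right.
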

\begin{proof}
    This is another instance of Hida's $p$-adic Rankin L-series  in \cite[Theorem 6.1.3]{LLZ15}, with the roles of $\fp$ and $\ov{\fp}$ reversed. See also \cite[Theorem 2.4.1]{CGS}.
\end{proof}
Let \(\Lambda^{\mathrm{ur}}_K:=\Lambda_K\widehat{\otimes}\mathbb{Z}_p^{\mathrm{ur}}\), where \(\mathbb{Z}_p^{\mathrm{ur}}\) is the completion of the ring of integers of the maximal unramified extension of \(\mathbb{Q}_p\).

\begin{thm}
There exists an element \(\mathcal{L}_{\mathfrak{p}}(K)\in\Lambda^{\mathrm{ur}}_K\) such that for every character \(\xi\) of \(\Gamma_K\) of infinity type \((j,k)\) with \(0\leq -j\leq k\),
\[
\mathcal{L}_{\mathfrak{p}}(K)(\xi)=\frac{\Omega_p^{k-j}}{\Omega_K^{k-j}}\cdot\Gamma(k)\cdot\left(\frac{\sqrt{D_K}}{2\pi}\right)^j\cdot(1-\xi^{-1}(\mathfrak{p})p^{-1})(1-\xi(\bar{\mathfrak{p}}))\cdot L(\xi,0),
\]
where \(\Omega_p\) and \(\Omega_K\) are the CM periods associated with \(K\).
\end{thm}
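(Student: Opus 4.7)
The plan is to construct $\CL_\fp(K)$ as the Mellin transform of a measure on $\Gamma_K$ built out of the classical Eisenstein measure of Katz. The bridge between complex analysis and $p$-adic analysis is the following: fix a CM elliptic curve $A/\bar{\BQ}$ with CM by $\CO_K$, good reduction at $\fp$, and a N\'eron differential $\omega$. This simultaneously determines the complex period $\Omega_K$ (via $H_1(A(\BC),\BZ)\otimes_\BZ \BC \simeq \BC$) and the $p$-adic period $\Omega_p$ (via the trivialization of the formal group of $A_{/\BZ_p^{\ur}}$). Damerell's theorem (refined by Shimura and Katz) shows that for a Hecke character $\xi$ of infinity type $(j,k)$ in the critical range $0 \leq -j \leq k$, the quantity $\Omega_K^{j-k}\Gamma(k)(\sqrt{D_K}/2\pi)^j \cdot L(\xi,0)$ is algebraic, indeed lies in a fixed number field.

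First I would express these algebraic L-values as evaluations of Eisenstein series at CM points: for each such $\xi$, write $L(\xi,0)$ (up to the Gamma factor and powers of the periods) as $E_{k,j}(A,\omega;\xi_{\mathrm{fin}})$ for a suitable weight-$(k,j)$ nearly holomorphic Eisenstein series on $GL_2/K$, evaluated at the CM point corresponding to $(A,\omega)$ and twisted by the finite-order part of $\xi$. Next I would invoke Katz's construction of a two-variable Eisenstein measure $\mu^{\Eis}$ on $(\CO_K\otimes\BZ_p)^\times \simeq \CO_{K,\fp}^\times\times\CO_{K,\bar\fp}^\times$ valued in $\BZ_p^\ur$-adic modular forms, characterized by the property that integrating the character $(a,b)\mapsto a^k b^{-j}$ yields the $p$-depletion of the classical Eisenstein series $E_{k,j}$. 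Composing with evaluation at $(A,\omega)$ and with the reciprocity map $\CO_{K,\fp}^\times\times \CO_{K,\bar\fp}^\times \twoheadrightarrow \Gamma_K$, I obtain a measure on $\Gamma_K$; its Amice transform is the desired element $\CL_\fp(K) \in \Lambda_K^{\ur}$.

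To verify the interpolation formula, I would compare the integral of $\xi$ against $\mu^{\Eis}$ with the complex L-value via the algebraicity of Damerell: the factor $\Omega_p^{k-j}/\Omega_K^{k-j}$ arises from switching from complex to $p$-adic evaluation at $(A,\omega)$; the Gamma and $\sqrt{D_K}/2\pi$ factors come from the archimedean component of the functional equation and the Rankin--Selberg integral that computes $E_{k,j}$ at the CM point; and the Euler factors $(1-\xi^{-1}(\fp)p^{-1})$ and $(1-\xi(\bar\fp))$ arise precisely from passing from $E_{k,j}$ to its $p$-depleted version, i.e.\ from removing the Euler factors at $\fp$ and $\bar\fp$ in a manner dictated by the ordinary filtration at $p=\fp\bar\fp$. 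Because the infinity types with $0\leq -j\leq k$ form a Zariski-dense subset of the space of continuous characters of $\Gamma_K$, the interpolation determines $\CL_\fp(K)$ uniquely, and checking the formula on a dense set extends it by continuity.

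The main obstacle will be the technical heart of Katz's construction: producing the measure $\mu^{\Eis}$ with values in $p$-adic modular forms requires delicate control of $q$-expansions, compatibilities between classical Eisenstein series of varying weights, and a careful choice of the $p$-depletion to reconcile the two different Euler factors at $\fp$ and $\bar\fp$ (one involves $\xi^{-1}(\fp)p^{-1}$ and the other $\xi(\bar\fp)$, reflecting that $\fp$ is "ordinary" while $\bar\fp$ is "anti-ordinary" in this two-variable setting). Once the measure is in hand, the passage to $\Lambda_K^{\ur}$ and the verification of the interpolation formula is essentially a bookkeeping exercise on periods and Euler factors, but the bookkeeping is intricate and the correct normalization of $\Omega_p$ relative to $\Omega_K$ must be tracked throughout.
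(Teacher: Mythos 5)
This is a cited result --- the paper invokes \cite[Chapter II, Theorem 4.14]{deShalit} as a black box and does not prove it --- so there is no ``paper's own proof'' to compare against line by line. Your sketch is a sound outline of a genuine construction of this two-variable $p$-adic $L$-function, but it follows Katz's route (the Eisenstein measure on $(\CO_K\otimes\BZ_p)^\times$ valued in $\BZ_p^\ur$-adic modular forms, evaluated at a CM point coming from a fixed $(A,\omega)$ with good reduction at $\fp$), whereas de Shalit's Chapter II constructs $\CL_\fp(K)$ quite differently: he starts from a norm-compatible system of elliptic units in the $\BZ_p^2$-tower, applies Coleman's theorem to turn it into a family of power series, and extracts a measure on $\Gamma_K$ from these; the interpolation formula is then proved by explicitly computing the Coleman power series and matching its twisted moments against complex $L$-values via Kronecker's second limit formula and Damerell--Shimura algebraicity. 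Both approaches produce the same object (up to normalization, a point de Shalit himself discusses), and each buys something: Katz's Eisenstein-measure construction is more self-contained and makes the interpolation factors and period normalizations transparent, which is what your sketch emphasizes; de Shalit's elliptic-unit construction tethers $\CL_\fp(K)$ directly to the Euler system of elliptic units, which is the reason it is the preferred reference in Iwasawa-theoretic applications like the one in this paper.

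Two small imprecisions in your write-up, neither fatal to the outline: the Eisenstein series whose CM values encode $L(\xi,0)$ live on modular curves for $GL_2/\BQ$ (with $K$ entering through the CM point), not on ``$GL_2/K$''; and the domain of the measure should really be the whole ray-class group $\varprojlim \mathrm{Cl}(\fn p^n)$ rather than just the local units $\CO_{K,\fp}^\times\times\CO_{K,\bar\fp}^\times$ --- the global ideal class group enters, and this is exactly where the factor $h_K$ in the paper's later definition of $\CL_p^{\Gr}$ comes from. You are right that the genuinely hard technical points (construction of the measure with coefficients in $p$-adic modular forms, the Maass--Shimura/Gauss--Manin comparison of algebraic and nearly-holomorphic differential operators, and the correct normalization of $\Omega_p$ against $\Omega_K$) are where most of the work sits; your sketch correctly identifies but does not discharge them, which is appropriate given that the paper itself does not either.
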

\begin{proof}
    See \cite[Theorem II.4.14]{deShalit}.
\end{proof}

\begin{defn}[Greenberg's \(p\)-adic \(L\)-function]
Define Greenberg's \(p\)-adic \(L\)-function as
\[
\mathcal{L}_p^{\mathrm{Gr}}(f/K):=h_K\cdot\mathcal{L}_{\mathfrak{p}}(K)'\cdot\mathcal{L}_p^{II}(f/K),
\]
where \(h_K\) denotes the class number of \(K\), and \(\mathcal{L}_{\mathfrak{p}}(K)'\) is the image of \(\mathcal{L}_{\mathfrak{p}}(K)\) under the map \(\Lambda_K^{\mathrm{ur}}\rightarrow \Lambda_K^{\mathrm{ur}}\), \(\gamma\mapsto\gamma^{1-\tau}\) for \(\gamma\in\Gamma_K\).
\end{defn}

\begin{lem}
Greenberg's \(p\)-adic \(L\)-function \(\mathcal{L}_p^{\mathrm{Gr}}(f/K)\) is integral, i.e., it belongs to \(\Lambda_K^{\mathrm{ur}}\).
\end{lem}
\begin{proof}
    See \cite[Lemma 2.4.4]{CGS}.
\end{proof}

\subsection{\texorpdfstring{BDP $p$-adic $L$-function}{BDP p-adic L-function}}

Assume that \(D_K\) is odd, \(D_K \neq -3\), and that the Heegner hypothesis holds. Fix an integral ideal \(\mathfrak{n}\subset\mathcal{O}_K\) satisfying \(\mathcal{O}_K/\mathfrak{n}\simeq \mathbb{Z}/N\mathbb{Z}\). Define \(\Lambda^{\mathrm{ur},\pm}_K:=\Lambda^{\pm}_K\widehat{\otimes}\mathbb{Z}_p^{\mathrm{ur}}\).
\begin{thm}
There exists an element \(\mathcal{L}_p^{\mathrm{BDP}}(f/K)\in \Lambda^{\mathrm{ur},-}_K\) characterized by the following interpolation property: for every character \(\xi\) of \(\Gamma^-_K\) crystalline at both \(\mathfrak{p}\) and \(\bar{\mathfrak{p}}\), corresponding to a Hecke character of \(K\) of infinity type \((n,-n)\) with \(n\in\mathbb{Z}_{>0}\) and \(n \equiv 0 \mod p-1\),
\[
\mathcal{L}_p^{\mathrm{BDP}}(f/K)(\xi)=\frac{\Omega_p^{4n}}{\Omega_K^{4n}}\cdot\frac{\Gamma(n)\Gamma(n+1)\xi(\mathfrak{n}^{-1})}{4(2\pi)^{2n+1}\sqrt{D_K}^{2n-1}}\cdot\left(1-a_p\xi(\bar{\mathfrak{p}})p^{-1}+\xi(\bar{\mathfrak{p}})^2p^{-1}\right)^2\cdot L(f/K, \xi, 1).
\]
\end{thm}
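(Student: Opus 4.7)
The plan is to follow the Bertolini--Darmon--Prasanna construction. One builds $\CL_p^{\mathrm{BDP}}(f/K)$ as a $p$-adic measure on $\Gamma_K^-$ by evaluating a $p$-adic avatar of $f$, together with its iterated theta derivatives, at a Galois orbit of CM points, and then identifies the interpolated quantities with complex $L$-values via a Waldspurger-type formula.

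First I would fix a CM point $x_0 \in X_0(N)$ attached to the pair $(\BC/\CO_K,\,\fn^{-1}/\CO_K)$, which exists because the Heegner hypothesis provides the ideal $\fn$ with $\CO_K/\fn \simeq \BZ/N\BZ$. Since $p = \fp\bar{\fp}$ splits in $K$, this point is ordinary and admits a canonical Serre--Tate lift $\tilde x_0$ to $X_0(N)_{/\BZ_p^{\ur}}$, on which $\Gamma_K^-$ acts by translation. On the Igusa tower over the ordinary locus of $X_0(N)$ one has the Atkin--Serre/Katz theta operator $\theta$, acting as $q\,d/dq$ on $q$-expansions, which $p$-adically interpolates the Maass--Shimura weight-raising operator at CM points. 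Writing $\hat f$ for the $p$-adic avatar of $f$ (of weight $2$) and $[\gamma]$ for a set of representatives of the appropriate class-group quotient, I would define
\[\CL_p^{\mathrm{BDP}}(f/K)(\xi) \;:=\; \sum_{[\gamma]}\,\xi(\gamma)\,\bigl(\theta^{n-1}\hat f\bigr)(\gamma\cdot\tilde x_0).\]
A standard Kummer-type congruence argument, together with the compatibility condition $n\equiv 0\pmod{p-1}$ which absorbs the Teichm\"uller twist, should glue these values into a single element of $\Lambda_K^{\ur,-}$.

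The interpolation identity is then obtained by comparing the $p$-adic and complex evaluations at $x_0$. On the complex side, Waldspurger's formula expresses $L(f/K,\xi,1)$, for $\xi$ of infinity type $(n,-n)$, as the square of a toric period integral which in turn equals the value at $x_0$ of the Maass--Shimura operator applied $n-1$ times to $f$, up to the normalising factor $\Gamma(n)\Gamma(n+1)\xi(\fn^{-1})/\bigl(4(2\pi)^{2n+1}\sqrt{D_K}^{2n-1}\bigr)$. Comparing the complex Maass--Shimura operator with $\theta$ at $\tilde x_0$ contributes the period ratio $(\Omega_p/\Omega_K)^{4n}$. The Euler factor $(1-a_p\xi(\bar{\fp})p^{-1}+\xi(\bar{\fp})^2p^{-1})^2$ arises, squared, from the passage between the unit-root splitting at $\bar{\fp}$ and the Hodge filtration used in the toric integral, the square reflecting the theta-lift structure underlying Waldspurger's formula.

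The hard part is pinning down this comparison precisely: matching the two differential operators at $\tilde x_0$ with exactly the period ratio above, choosing test vectors so that Waldspurger's formula produces the displayed Euler factor on the nose, and verifying integrality of the resulting measure rather than only its meromorphicity. The congruence $n\equiv 0\pmod{p-1}$ and the twist by $\xi(\fn^{-1})$ in the formula both reflect delicate normalisations of level structure and Teichm\"uller correction needed for the measure to land in $\Lambda_K^{\ur,-}$.
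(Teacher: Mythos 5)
The paper does not prove this theorem; it is cited verbatim from CGLS (which in turn rests on Bertolini--Darmon--Prasanna), so there is no internal proof to compare against. Your sketch does follow the right strategy — CM points, the Serre--Tate/Katz canonical lift, the Atkin--Serre theta operator, Kummer congruences for gluing, and a Waldspurger-type period formula to get the interpolation — and the exponent $\theta^{n-1}$ for a weight-$2$ form and infinity type $(n,-n)$ is correct.

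The genuine gap is the omission of $p$-depletion. You define the measure by evaluating $\theta^{n-1}\hat f$ at the lifted CM point, but for the undepleted form $\hat f$ this does not produce a $p$-adic measure on $\Gamma_K^-$: the powers $\theta^{j}$ for varying $j$ assemble into a measure only after passing to the $p$-depletion $f^{[p]}$ (the form with $p$-th Fourier coefficients removed), which is the step that makes Katz's theory of $\theta$ applicable and is what the Kummer congruences are actually applied to. This is not cosmetic: the Euler factor $\bigl(1-a_p\xi(\bar{\fp})p^{-1}+\xi(\bar{\fp})^2 p^{-1}\bigr)^2 = \bigl((1-\alpha_p\xi(\bar{\fp})/p)(1-\beta_p\xi(\bar{\fp})/p)\bigr)^2$ in the interpolation formula arises precisely from comparing $\theta^{n-1}\hat f^{[p]}$ at the CM point with the complex Maass--Shimura derivative of $f$ itself, and not, as you suggest, from ``the passage between the unit-root splitting at $\bar{\fp}$ and the Hodge filtration'' (that comparison is responsible for the CM period ratio $(\Omega_p/\Omega_K)^{4n}$, a separate ingredient). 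As written, your construction has no mechanism to produce this Euler factor, and the object you define is not visibly an element of $\Lambda_K^{\ur,-}$ since the definition depends on $n$ through $\theta^{n-1}$ without a single underlying measure. Introducing $f^{[p]}$ both fixes the measure-theoretic construction and explains where the local factor at $p$ comes from.
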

\begin{proof}
    This was originally constructed in \cite{BDP} (as a continuous function of $\xi$).
    See \cite[Theorem 2.1.1]{CGLS} for this refined construction, which is based on \cite[Section 3]{CH}.
\end{proof}

Let \(\mathcal{L}_p^{\mathrm{Gr}}(f/K)^\pm\) denote the image of \(\mathcal{L}_p^{\mathrm{Gr}}(f/K)\) under the natural projection \(\Lambda^{\mathrm{ur}}_K\rightarrow \Lambda^{\mathrm{ur},\pm}_K\). We have the following proposition.

\begin{prop}\label{comp}
\[
\mathcal{L}_p^{\mathrm{Gr}}(f/K)^-\cdot\Lambda^{\mathrm{ur},-}_K=\mathcal{L}_p^{\mathrm{BDP}}(f/K)\cdot \Lambda^{\mathrm{ur},-}_K.
\]
\end{prop}
\begin{proof}
    See \cite[Proposition 2.4.5]{CGS}.
\end{proof}

\section{Main conjectures over $K_\infty$}

\subsection{Two variable Iwasawa main conjectures}

Let $E/\BQ$ be an elliptic curve of conductor $N$ and $p>2$ be a prime at which $E$ has good ordinary reduction. Let $K$ be an imaginary quadratic field in which the prime $p$ splits, i.e., $p\CO_K=\fp\bar{\fp}$. Assume that the residual representation
\[
\bar{\rho}_{E}|_{G_K}: G_K \rightarrow \Aut(E[p])
\]
is irreducible.

Let $f$ be the newform associated to $E$. For the remainder of this paper, we will denote $\CL^\Gr(f/K)$ by $\CL^\Gr(E/K)$, and similarly $\CL_p^{\Gr}(f/K)^\pm$ and $\CL^{\mathrm{BDP}}(f/K)$ by analogous notation involving $E/K$. We consider the following two-variable Iwasawa main conjectures:

\begin{conj}\label{mainconj1}
    \begin{enumerate}
        \item $\CX_{\CF_\ord}(E/K_\infty)$ is $\Lambda_K$-torsion and
        \begin{equation*}
            \Char_{\Lambda_K}(\CX_{\CF_\ord}(E/K_\infty))=(\CL_p^\mathrm{PR}(E/K)).
        \end{equation*}
        \item $\CX_{\CF_\Gr}(E/K_\infty)$ is $\Lambda_K$-torsion and
        \begin{equation*}
            \Char_{\Lambda_K}(\CX_{\CF_\Gr}(E/K_\infty))\Lambda_K^{\ur}=(\CL_p^{\Gr}(E/K)).
        \end{equation*}
    \end{enumerate}
\end{conj}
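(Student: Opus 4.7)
The plan is to prove both parts of Conjecture~\ref{mainconj1} under the hypotheses of Theorem~\ref{mainthm}: the Heegner hypothesis and absolute irreducibility of $\bar\rho_E|_{G_K}$ for the containment, with the additional assumption (Im) for equality. Since parts~(1) and~(2) are equivalent via the Beilinson--Flach argument of \cite{BSTW} (the same mechanism behind the passage from Theorem~\ref{skinner2014iwasawa} to Theorem~\ref{Gr}), it suffices to prove one inclusion on one side; I would work on the Greenberg side.

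By Theorem~\ref{Gr}, the inclusion
\[
\Char_{\Lambda_K^{\ur}}\bigl(\CX_\Gr(E/K_\infty)\bigr)\subset\bigl(\CL_p^{\Gr}(E/K)\bigr)
\]
is already known after discarding the ``cyclotomic'' height-one primes $P=P^+\Lambda_K^{\ur}$ with $P^+\subset\Lambda_K^{\ur,+}$. To eliminate this exceptional locus, I would combine Proposition~\ref{comp}, which identifies the anticyclotomic projection $\CL_p^{\Gr}(E/K)^-$ with the BDP $p$-adic $L$-function up to a unit in $\Lambda_K^{\ur,-}$, with Hsieh's theorem \cite{hsieh2014special} that $\CL_p^{\mathrm{BDP}}(f/K)$ has vanishing $\mu$-invariant (which applies under the Heegner hypothesis). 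This forces $\ord_P\bigl(\CL_p^{\Gr}(E/K)\bigr)=0$ for every such $P$ and upgrades the inclusion to a global one in part~(2); reapplying the \cite{BSTW} equivalence then yields the same inclusion in part~(1).

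For the reverse divisibility under~(Im), I would descend to the cyclotomic line. Kato's theorem \cite{Kato} gives one inclusion in the cyclotomic main conjecture for $E/\BQ$ and for its quadratic twist $E^K/\BQ$, and the factorization $\CL_p^{\mathrm{PR}}(E/K)^+ = \CL_p^{\mathrm{MSD}}(E/\BQ)\cdot\CL_p^{\mathrm{MSD}}(E^K/\BQ)$ (up to a unit in $\Lambda_\BQ^\times$) then yields a corresponding inclusion of characteristic ideals over $\Lambda_\BQ$. Condition~(Im) ensures that the natural control map on Selmer groups is free of kernel and cokernel obstructions, so a standard specialization argument modeled on Theorem~3.30 of \cite{skinner2014iwasawa} promotes the one-variable inequality to a two-variable one; combining this with the divisibility already obtained forces equality in both parts of Conjecture~\ref{mainconj1}.

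The main obstacle---precisely the simple observation emphasized in the introduction---is the removal of the cyclotomic exceptional primes, which Skinner--Urban's Eisenstein-congruence method on $GU(2,2)$ cannot detect on the Perrin--Riou side. The decisive move is to transfer the problem via Beilinson--Flach to the Greenberg side, where Hsieh's $\mu=0$ result for the BDP $L$-function supplies the missing non-vanishing. A technical point requiring care is that $\mu=0$ for $\CL_p^{\mathrm{BDP}}$ as an element of $\Lambda_K^{\ur,-}$, via Proposition~\ref{comp}, should indeed control $\CL_p^{\Gr}(E/K)$ along every height-one prime of $\Lambda_K^{\ur}$ pulled back from $\Lambda_K^{\ur,+}$, and not merely at the specific prime $(\gamma^+-1)$.
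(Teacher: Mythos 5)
Your proposal follows essentially the same route as the paper: start from the Skinner--Urban divisibility away from cyclotomic height-one primes, transfer to the Greenberg side via the Beilinson--Flach equivalence of \cite{BSTW}, eliminate the exceptional primes using Proposition~\ref{comp} together with Hsieh's $\mu=0$ theorem, transfer back, and finally upgrade to equality under (Im) via Kato's theorem and the commutative-algebra argument of \cite[Theorem 3.30, Lemma 3.2]{skinner2014iwasawa}. The one small imprecision is your gloss on the role of (Im): it is not about exactness of control maps but is the weakened big-image hypothesis needed to make Kato's Euler-system divisibility hold integrally (as the paper notes by citing the discussion in \cite{ski2016}); this does not affect the correctness of the overall argument.
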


The main theorem we will establish is the following:

\begin{thm}\label{mainthm1}
Suppose the residual representation $\bar{\rho}_{E}|_{G_K}:G_K\rightarrow \Aut(E[p])$ is absolutely irreducible. If the Heegner hypothesis holds (in particular, $\sign(E/K)=-1$), then:
    \begin{enumerate}
        \item $\CX_{\CF_\ord}(E/K_\infty)$ is $\Lambda_K$-torsion and
        \begin{equation*}
            \Char_{\Lambda_K}(\CX_{\CF_\ord}(E/K_\infty))\subset(\CL_p^\mathrm{PR}(E/K)).
        \end{equation*}
        \item $\CX_{\CF_\Gr}(E/K_\infty)$ is $\Lambda_K$-torsion and
        \begin{equation*}
            \Char_{\Lambda_K}(\CX_{\CF_\Gr}(E/K_\infty))\Lambda_K^{\ur}\subset(\CL_p^{\Gr}(E/K)).
        \end{equation*}
        Moreover, if the following condition 
            \begin{equation}\tag{Im}
        \text{there exists } \tau \in \operatorname{Gal}(\overline{\mathbb{Q}}/\mathbb{Q}(\mu_{p^\infty})) \text{ such that } T_pE / (\rho_E(\tau) - 1)T_pE \text{ is free of rank one over }\BZ_p 
    \end{equation}
        holds, then Conjecture \ref{mainconj1} is true.
    \end{enumerate}
\end{thm}

We will prove Theorem \ref{mainthm1} in the following two subsections.

\subsection{A three variable Iwasawa main conjecture}

Let $\mathbf{f}$ be an $\BI$-adic ordinary eigenform of tame level $N$ and trivial nebentypus as in Section \ref{Hida}. Suppose that $L\supset\BQ[\mu_{Np}, i, D_K^{1/2}]$, $\BI$ is a normal domain, and $\mathbf{f}$ satisfies ($\irred_{\mathbf{f}}$). Note that in our case, $\mathbf{f}$ satisfies ($\text{dist}_{\mathbf{f}}$) since $\mathbf{f}$ has trivial nebentypus.

\begin{conj}[\cite{skinner2014iwasawa}]\label{Hidaconj}
Let $\Sigma$ be a finite set of primes containing all primes dividing $pND_K$. Then
\[
\Char_{\BI_K}(\CX^{\Sigma}_{\CF_\ord}(\mathbf{f}/K_\infty))=(\CL^\Sigma_{\mathbf{f},K}).
\]
\end{conj}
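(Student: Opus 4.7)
The plan is to adapt the Eisenstein congruence strategy of Skinner--Urban, lifted to the three-variable Iwasawa setting, with the equality established by proving the two opposite divisibilities separately: Eisenstein congruences on the quasi-split unitary similitude group $GU(2,2)$ would give
$$\Char_{\BI_K}(\CX^\Sigma_\ord(\mathbf{f}/K_\infty)) \subset (\CL^\Sigma_{\mathbf{f},K}),$$
while a Hida-family Euler system argument (Beilinson--Flach, or alternatively specialization of Theorem \ref{mainthm1}) would give the reverse containment.

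For the first divisibility, I would construct a three-variable $p$-adic family of Klingen--Eisenstein series on $GU(2,2)/\BQ$, where the three deformation directions match those of $\BI_K = \BI[[\Gamma_K]]$: the Hida variable of $\mathbf{f}$ and the two variables of the $\BZ_p^2$-extension $K_\infty/K$. Classical specializations at $\phi \in \fX'_{\BI_K,\CO}$ should recover the Klingen--Eisenstein series attached to $\mathbf{f}_{\phi|_\BI}$ and the Hecke character corresponding to $\phi$, and a Fourier--Jacobi expansion computation would show that $\CL^\Sigma_{\mathbf{f},K}$ arises, up to units and local Euler factors at primes in $\Sigma$ (accounting for the imprimitive version), as the essential factor in the constant term along the Siegel parabolic. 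I would then invoke a Klingen-type congruence: modulo $\CL^\Sigma_{\mathbf{f},K}$, the Eisenstein family matches a cuspidal Hida family on $GU(2,2)$, whose Galois representation becomes reducible modulo this ideal. An Urban-style lattice argument, using $(\irred_\mathbf{f})$ together with the normality of $\BI$, would then produce extensions yielding nontrivial classes in $H^1_{\CF_\ord^\Sigma}(K, T_\mathbf{f} \otimes \BI_K^\vee)$ sufficient to give the desired divisibility via the standard Selmer-counting argument.

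For the reverse divisibility, the cleanest approach is to interpolate Beilinson--Flach classes in a three-variable family and combine with the Perrin-Riou big logarithm, following Kings--Loeffler--Zerbes. Alternatively, one can specialize at a Zariski-dense set of arithmetic points $\phi$ with $k_\phi = 2$ and $\phi(1+W) = 1$ such that $\mathbf{f}_\phi$ is the $p$-stabilization of a newform attached to an elliptic curve satisfying (\ref{Imag}); at each such point, Theorem \ref{mainthm1} combined with Corollary \ref{Hida_des} and Proposition \ref{imp0} yields equality after specialization, which would then be lifted via a control/density argument. The principal obstacle is Step 1: the technical construction of the Klingen--Eisenstein family together with uniform control of the lattice in the three-variable setting, in particular handling the bad primes in $\Sigma$ so that the imprimitive statement is sharp. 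For the reverse direction, a further difficulty is verifying that sufficiently many arithmetic specializations satisfy the hypotheses of Theorem \ref{mainthm1} to bootstrap the pointwise equalities into an equality of characteristic ideals in $\BI_K$.
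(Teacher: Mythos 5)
You have written a proof sketch for a statement that the paper does not prove: Conjecture~\ref{Hidaconj} is stated explicitly as a \emph{conjecture}, cited to \cite{skinner2014iwasawa}, and nowhere in the paper is the equality of characteristic ideal and $p$-adic $L$-function for the three-variable Hida-family Selmer group established. The only progress the paper makes toward it is Theorem~\ref{SU1}, which gives the one-sided inequality
\[
\ord_P\bigl(\Char_{\BI_K}(\CX^{\Sigma}_\ord(\mathbf{f}/K_\infty))\bigr)\ \geq\ \ord_P(\CL^\Sigma_{\mathbf{f},K})
\]
for height-one primes $P$ that are \emph{not} of the form $P^+\BI[[\Gamma_K]]$ with $P^+\subset\BI[[\Gamma_K^+]]$; this is obtained by citing \cite[Theorem~7.7 and Proposition~13.6(1)]{skinner2014iwasawa}, not by reproving the Eisenstein congruence machinery. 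Your outline of the first divisibility (Klingen--Eisenstein family on $GU(2,2)$, Fourier--Jacobi analysis of the constant term, lattice construction of Selmer classes modulo $\CL^\Sigma_{\mathbf{f},K}$) is a reasonable summary of the Skinner--Urban strategy underlying Theorem~\ref{SU1}, but it should be presented as such --- a citation, not a new proof --- and you should note that the argument as used in this paper deliberately drops the primes coming from $\BI[[\Gamma_K^+]]$, precisely so as to avoid the hypotheses at bad places needed for \cite[Proposition~13.6(2)]{skinner2014iwasawa}.

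The reverse divisibility in your plan is the genuine gap. The specialization argument cannot work as stated: demanding $k_\phi=2$ and $\phi(1+W)=1$ forces $\phi$ to lie over the single closed point $W=0$ of $\Spec\Lambda_W$, so the arithmetic points at which you could invoke Theorem~\ref{mainthm1} (or Corollary~\ref{Hida_des}, Proposition~\ref{imp0}) form a \emph{finite}, far from Zariski-dense, subset of $\Spec\BI_K$; no control-theorem bootstrap will lift finitely many pointwise equalities in $\Lambda_K$ to an ideal equality in $\BI_K$. Moreover Theorem~\ref{mainthm1} applies only to elliptic curves (weight $2$, trivial character, rational coefficients) and under the Heegner hypothesis, so you cannot enlarge the set of usable specializations by varying the weight. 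The Beilinson--Flach / big-logarithm route you mention in passing is the serious candidate for the reverse divisibility, but it is a substantial independent theorem, and the present paper neither proves nor needs it: the paper's actual logical flow uses only Theorem~\ref{SU1} together with Hsieh's $\mu$-invariant vanishing to dispose of the excluded primes, and never claims the full Conjecture~\ref{Hidaconj}.
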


\begin{thm}\label{SU1}
Under the conditions of Conjecture \ref{Hidaconj}, we have, for any height one prime $P$ of $\BI_K=\BI[[\Gamma_K]]$,
\[
\ord_P\left(\Char_{\BI_K}(\CX^{\Sigma}_{\CF_\ord}(\mathbf{f}/K_\infty))\right)\geq\ord_P(\CL^\Sigma_{\mathbf{f},K}),
\]
unless $P=P^+\BI[[\Gamma_K]]$ for some height one prime $P^+$ of $\BI[[\Gamma_K^+]]$.
\end{thm}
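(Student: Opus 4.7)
The plan is to reduce Theorem \ref{SU1} to the main Eisenstein-congruence construction of Skinner-Urban \cite{skinner2014iwasawa}, carried out on the rank-two unitary similitude group $\mathrm{GU}(2,2)_{/K}$, and to extract the divisibility at a non-cyclotomic height-one prime $P$ from a lattice argument applied to a cuspidal deformation of an Eisenstein family.

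First I would pair $\mathbf{f}$ with an auxiliary CM Hida family $\mathbf{g}$ over $K$ whose associated characters sweep the anticyclotomic and residual direction of $\BI_K = \BI[[\Gamma_K]]$. Pulling back a carefully chosen Klingen-Eisenstein $p$-adic family on $\mathrm{GU}(2,2)$ along the embedding $\mathrm{GU}(1,1)\times \mathrm{U}(1,1)\hookrightarrow \mathrm{GU}(2,2)$, with archimedean and ramified sections chosen to reproduce the interpolation factors of the Rankin-Selberg $L$-function of $\mathbf{f}\times\mathbf{g}$, one obtains an ordinary $p$-adic family on $\mathrm{GU}(2,2)$ whose constant term along the Klingen parabolic equals $\CL^\Sigma_{\mathbf{f},K}$ up to a $p$-adic unit.

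The core of the argument is the Eisenstein-cuspidal congruence: modulo the ideal generated by this constant term, the Klingen family is congruent to a genuinely cuspidal ordinary Hida family on $\mathrm{GU}(2,2)$. This is established by exhibiting a Fourier-Jacobi coefficient of the Eisenstein family which is a $P$-unit, thereby forcing the constant term into the Eisenstein ideal of the ordinary Hida Hecke algebra. The hypothesis $P \neq P^+\BI_K$ enters precisely here: at cyclotomic-type primes the auxiliary family $\mathbf{g}$ and the ramified local sections degenerate and the required non-vanishing of the Fourier-Jacobi coefficient breaks down. Using ($\irred_{\mathbf{f}}$) to control residual reducibility, one then attaches a four-dimensional Galois representation $\rho : G_K \to \mathrm{GL}_4(\BI_K)$ to the cuspidal family, whose reduction modulo the Eisenstein ideal is an extension of two two-dimensional pieces related to $T_\mathbf{f}$ and a twist by characters of $\Gamma_K$. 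A Ribet-style lattice construction extracts non-trivial extension classes in $H^1(G_{K,\Sigma},\mathbf{M})$ satisfying the Panchishkin-ordinary condition at primes above $p$, dually producing elements of $\CX^\Sigma_\ord(\mathbf{f}/K_\infty)$ whose $P$-adic length accounts for at least $\ord_P(\CL^\Sigma_{\mathbf{f},K})$.

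The main obstacle is the non-vanishing of the chosen Fourier-Jacobi coefficient modulo $P$: it rests on a delicate computation of ramified local sections combined with an archimedean local calculation, and is precisely the step that excludes the cyclotomic-type primes from the statement. Once this input is in place, the inequality follows by a Poitou-Tate computation and a length comparison in the ordinary Selmer complex.
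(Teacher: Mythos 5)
Your proposal follows essentially the same route as the paper: both rest on the Skinner-Urban Eisenstein congruence on $\mathrm{GU}(2,2)_{/K}$, with the cyclotomic-type primes $P=P^+\BI[[\Gamma_K]]$ excluded precisely because the non-vanishing of Fourier-Jacobi coefficients fails to control them. The paper's proof is a pinpoint citation rather than a re-exposition — it invokes \cite[Proposition 13.6(1)]{skinner2014iwasawa} (a designated set $\CC_{\mathbf D}$ of Fourier coefficients of the Klingen-Eisenstein family $\mathbf{E}_{\mathbf D}$ generating an ideal whose height-one primes are all cyclotomic), combined with \cite[Theorem 7.7]{skinner2014iwasawa} — but the mechanism is the same one you describe.

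Two points that you gloss over deserve to be made explicit, since they are what give the theorem its precise form. First, the constant term of the Klingen-Eisenstein family is not $\CL^{\Sigma}_{\mathbf{f},K}$ alone but the product $\CL^{\Sigma}_{\mathbf{f},K}\cdot\CL^{\Sigma}_{1}$, where $\CL^{\Sigma}_{1}$ is a Kubota-Leopoldt type factor. The Eisenstein-ideal argument therefore yields divisibility by the product, and it is precisely because $\CL^{\Sigma}_{1}\in\BI[[\Gamma_{K}^{+}]]$ — so its divisor is supported on cyclotomic primes — that it can be discarded after restricting to $P\neq P^{+}\BI[[\Gamma_K]]$. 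Your proposal implicitly conflates the constant term with $\CL^{\Sigma}_{\mathbf{f},K}$ and would need this refinement. Second, and this is the paper's key observation motivating the restricted statement: by avoiding the cyclotomic primes one needs only Proposition 13.6(1) and not Proposition 13.6(2) of \cite{skinner2014iwasawa}; the latter is what requires the additional hypotheses on $N$ and on $\bar{\rho}_{\mathbf f}|_{I_{\ell}}$ for $\ell\mid N$ appearing in Skinner-Urban's Theorem 3.26. This is why the theorem above can be stated assuming only $(\irred_{\mathbf f})$. Your proposal is silent on this hypothesis bookkeeping, which is the novel content of recording the result in this form; the Eisenstein-congruence machinery itself is, as you rightly note, due to Skinner-Urban.
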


\begin{proof}
Let $\mathbf{D}=(A,\mathbf{f},1,1,\Sigma)$ be a $p$-adic Eisenstein datum as defined in \cite{skinner2014iwasawa}, with $A\supset\BZ_p[i,D_K]$ a finite $\BZ_p$-algebra. Define $\Lambda_{\mathbf{D}}:=\BI[[\Gamma_K]][[\Gamma_K^-]]$ as in \cite{skinner2014iwasawa}. The proof follows essentially from Section 7.4 of \cite{skinner2014iwasawa}, with the exception that we exclude primes $P=P^+\Lambda_{\mathbf{D}}$ for some height one prime $P^+$ of $\BI[[\Gamma_K^+]]$. 

By \cite[Proposition 13.6(1)]{skinner2014iwasawa}, for sufficiently large $\Sigma$, there exists a $p$-adic Eisenstein series $\mathbf{E}_{\mathbf{D}}$ whose formal $q$-expansion coefficients lie in $\Lambda_{\mathbf{D}}$, along with a set $\CC_{\mathbf{D}}$ of coefficients of $\mathbf{E}_{\mathbf{D}}$. This set satisfies that any height one prime $P$ of $\Lambda_{\mathbf{D}}$ containing $\CC_{\mathbf{D}}$ must be of the form $P= P^+\Lambda_{\mathbf{D}}$ for a height one prime $P^+$ of $\BI[[\Gamma_K^+]]$.

By \cite[Theorem 7.7]{skinner2014iwasawa}, we have
\[
\ord_P\left(\Char_{\BI_K}(\CX^{\Sigma}_{\CF_\ord}(\mathbf{f}/K_\infty))\right)\geq\ord_P(\CL^\Sigma_{\mathbf{f},K})
\]
for all height one primes $P$ of $\Lambda_{\mathbf{D}}$ satisfying
\begin{enumerate}
    \item $\mathbf{E}_{\mathbf{D}}$ is non-zero modulo $P$.
    \item $\ord_P(\CL_1^\Sigma)=0$, where $\CL_1^\Sigma\in \BI[[\Gamma_K^+]]$ denotes the auxiliary $p$-adic L-function defined in \cite[Section 6.5.3]{skinner2014iwasawa} (in fact, it is  the image of the $p$-adic L-function of the trivial Dirichlet character under the composition of the identification $A[[\Gamma_\BQ]]\simeq A[[\Gamma_K^+]]$ and the map $A[[\Gamma_K^+]]\ra \BI[[\Gamma_K]], \gamma^+\mapsto (1+W)^{-1} (\gamma^{+})^2$).
\end{enumerate}
Consequently, for all height one primes $P$ not of the form $P^+\Lambda_{\mathbf{D}}$,
\[
\ord_P\left(\Char_{\BI_K}(\CX^{\Sigma}_{\CF_\ord}(\mathbf{f}/K_\infty))\right)\geq\ord_P(\CL^\Sigma_{\mathbf{f},K}).
\]
\end{proof}

\begin{remark}
Since primes of the form $P= P^+\Lambda_{\mathbf{D}}$ for height one primes $P^+$ of $\BI[[\Gamma_K^+]]$ are excluded, Proposition 13.6 (2) of \cite{skinner2014iwasawa} is unnecessary. Hence, we omit the conditions on $N$ and on $\bar{\rho}_\mathbf{f}|_{I_\ell}$ for primes $\ell\mid N$, where $\bar{\rho}_\mathbf{f}:=\rho_\mathbf{f} \mod \fm_\BI$, with $\fm_\BI$ being the maximal ideal of $\BI$. See also the remark after Theorem 3.26 in \cite{skinner2014iwasawa}.
\end{remark}

\begin{cor}\label{BDP1}
Assume that the residual representation $\ov{\rho}_E|_{G_K}$ is irreducible.
There exists a nontrivial multiplicative set $S\subset \Lambda_K^+\subset\Lambda_K$ such that
\[
S^{-1}\Char_{\Lambda_K}(\CX_{\CF_\ord}(E/K_\infty))\subset(\CL_p^\mathrm{PR}(E/K))
\]
holds in $S^{-1}\Lambda_K$.
\end{cor}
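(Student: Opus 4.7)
The plan is to derive the inclusion by specializing the three-variable Hida family main conjecture inclusion of Theorem \ref{SU1} at an arithmetic point corresponding to $f_E$, and then using Proposition \ref{imp0} and Corollary \ref{impr} to convert the imprimitive big $p$-adic $L$-function into Perrin-Riou's $\CL_p^\mathrm{PR}(E/K)$.

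First, choose an $\BI$-adic ordinary Hida family $\mathbf{f}$ of tame level $N$ and trivial character passing through the ordinary $p$-stabilization of $f_E$ at an arithmetic point $\phi\in\fX^a_{\BI,\CO}$ with $\phi(1+W)=1$ (so $k_\phi=2$ and $\chi_\phi$ is trivial). Such $(\mathbf{f},\phi)$ exists by Hida's ordinary deformation theory, and $(\irred_\mathbf{f})$ holds because $\bar\rho_E|_{G_K}$ is irreducible. Theorem \ref{SU1} then gives, for every height one prime $P$ of $\BI_K$ not of the form $P^+\BI_K$ with $P^+\subset\BI[[\Gamma_K^+]]$,
\[\ord_P(\Char_{\BI_K}(\CX^{\Sigma}_\ord(\mathbf{f}/K_\infty)))\geq\ord_P(\CL^{\Sigma}_{\mathbf{f},K}).\]

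Next, specialize along $\phi\otimes\id:\BI_K\to\Lambda_{K,\CO}=\CO[[\Gamma_K]]$. Corollary \ref{Hida_des} identifies the specialization of the Selmer side with $\CX^{\Sigma}_\ord(E/K_\infty)\otimes\CO$, while Proposition \ref{imp0} computes the specialization of the $L$-function side as $\alpha\cdot\prod_{\fq\in\Sigma,\fq\nmid p}P_\fq(\varepsilon_K(\Frob_\fq^{-1}))\cdot\CL_p^{\mathrm{PR}}(E/K)$ for a $p$-adic unit $\alpha$. A standard descent argument for characteristic ideals along the prime $\fp_\phi\BI_K\subset\BI_K$ (regularly generated, pulled back from $\ker(\phi)\subset\BI$, and transverse to the exceptional primes $P^+\BI_K$) transfers the divisibility to $\Lambda_{K,\CO}$, valid after inverting a nontrivial multiplicative subset $S_\CO\subset\Lambda_{K,\CO}^+$ that encodes the image of the exceptional locus.

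Finally, by Corollary \ref{impr}, the Euler product $\prod_{\fq\in\Sigma,\fq\nmid p} P_\fq(\varepsilon_K(\Frob_\fq^{-1}))$ on the Selmer side cancels the identical one on the $L$-function side, so the inclusion in $\Lambda_{K,\CO}$ reduces to $S_\CO^{-1}\Char(\CX_\ord(E/K_\infty)\otimes\CO)\subset S_\CO^{-1}(\CL_p^{\mathrm{PR}}(E/K))$. Descending by faithful flatness of $\Lambda_{K,\CO}$ over $\Lambda_K$ and setting $S:=S_\CO\cap\Lambda_K^+$, which is nontrivial by arranging the generators of $S_\CO$ to come from $\Lambda_K^+$, yields the desired inclusion. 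The main obstacle is the bookkeeping around the specialization of characteristic ideals, and in particular the precise verification that the exceptional primes $P^+\BI_K\subset\BI_K$ map to primes of $\Lambda_{K,\CO}$ supported in $\Lambda_{K,\CO}^+$, using the identification $\Lambda_W\simeq\Lambda_K^+$ from the Hida setup.
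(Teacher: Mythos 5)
Your strategy is the same as the paper's: apply Theorem~\ref{SU1} for the three-variable divisibility away from exceptional primes, specialize at $\phi$ via Corollary~\ref{Hida_des}, Proposition~\ref{imp0}, Corollary~\ref{impr}, and invert a multiplicative set arising from the image of the exceptional locus. The structure of the reduction steps and the cancellation of the Euler factors matches the intended argument.

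However, there is a genuine gap in the step where you assert that $\fp_\phi\BI_K$ is ``transverse to the exceptional primes $P^+\BI_K$,'' from which you conclude that $S_\CO$ is nontrivial. This is false as stated: $\fp_\phi\BI_K$ \emph{is itself} one of the exceptional primes (take $P^+=\fp_\phi\BI[[\Gamma_K^+]]$), so transversality fails there. What actually saves the argument is Rohrlich's non-vanishing theorem for $L(E/K,\chi,1)$: it guarantees that $\CL^\Sigma_{\mathbf{f},K}\notin\fp_\phi\BI_K$, i.e.\ $\ord_{\fp_\phi\BI_K}(\CL^\Sigma_{\mathbf{f},K})=0$, so $\fp_\phi\BI_K$ is \emph{not} among the finitely many exceptional primes $P_1,\dots,P_n$ at which the $L$-function has positive order (the only ones you actually need to invert, since for the others the divisibility of orders is automatic). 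For those $P_i$, one may then choose $h_i\in P_i\setminus\fp_\phi\BI_K$, and $\phi(h_i)\neq 0$, which is what makes $S$ nontrivial. Without this non-vanishing input and without distinguishing which exceptional primes need inverting, the nontriviality of $S$ --- the content of the corollary and the thing that makes the subsequent argument via Hsieh's $\mu$-invariant result work --- is not established.
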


\begin{proof}
By \cite[Corollary 1.5]{Hida86}, there exist  an $\BI$-adic ordinary eigenform  $\mathbf{f}$ with trivial nebentypus and a weight 2 arithmetic specialization $\phi\in\fX_{\BI,\CO}^a$ such that   the specialized form $\phi(\mathbf{f})$ is the $p$-stabilization of the newform associated to $E$.
Let $\fp_\phi$ be the kernel of the specialization map $\phi$.

By Propositions \ref{imp0} and  \ref{non1}, the $p$-adic $L$-function $\CL^\Sigma_{\mathbf{f},K}$ is not contained in $\fp_{\phi}\BI_K$. Let $\left\{ P_1,\dots,P_n\right\}$ be the set of all height one primes of $\BI_K$ such that  $P_i=P_i^+\BI_K$ for some height one prime $P_i^+$ of $\BI[[\Gamma_K^+]]$ and $\ord_{P_i}(\CL^\Sigma_{\mathbf{f},K})>0$. 
It follows that $P_i\not\subset \fp_{\phi}\BI_K$ for all $i$. 
For each $i$, choose an element $h_i \in P_i - \fp_{\phi}\BI_K$, and let $T$ be the multiplicative set generated by $\{h_i : i=1,\dots,n\}$. 
Then for any height one prime $P$ of $\BI_K$ with $P\cap T=\emptyset$, either $\ord_{P}(\CL^\Sigma_{\mathbf{f},K})=0$ or $P$ is not of the form $P^+\BI_K$ for any height one prime $P^+$ of $\BI[[\Gamma_K^+]]$. 
Hence by Theorem \ref{SU1}, we have that 
\[
T^{-1}\Char_{\BI_K}(\CX^{\Sigma}_{\CF_\ord}(\mathbf{f}/K_\infty))\subset(\CL^\Sigma_{\mathbf{f},K}).
\]
 Set $S=\phi(T)$. The result then follows from applying \cite[Corollary 3.8 (ii)]{skinner2014iwasawa}, Corollary \ref{Hida_des}, Corollary \ref{impr}, and Proposition \ref{imp0}.
\end{proof}

\subsection{Proof of  Theorem \ref{mainthm1}}\label{proof}

\begin{thm}\label{2_var}
For every nontrivial multiplicative set $S \subset \Lambda_K$, the following statements are equivalent:
\begin{enumerate}
    \item $S^{-1}\Char_{\Lambda_K}(\CX_{\CF_\ord}(E/K_\infty)) \subset (\CL_p^\mathrm{PR}(E/K))$.
    \item $S^{-1}\Char_{\Lambda_K}(\CX_{\CF_\Gr}(E/K_\infty))\Lambda_K^{\ur} \subset (\CL_p^{\Gr}(E/K))$.
\end{enumerate}
The same equivalence also holds for the reverse inclusions.
\end{thm}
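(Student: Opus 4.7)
The plan is to adapt the argument of \cite[Proposition 9.18]{BSTW} (see also \cite[Proposition 3.2.1]{CGS}), which proves exactly this equivalence in a closely related formulation. The heart of the argument is the Beilinson--Flach Euler system together with its explicit reciprocity law at $\fp$ and at $\bar\fp$, combined with Poitou--Tate global duality.

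First, I would compare $\CX_{\CF_\ord}(E/K_\infty)$ and $\CX_{\CF_\Gr}(E/K_\infty)$ via a Poitou--Tate four-term exact sequence. Since these Selmer groups differ only in their local conditions at $\fp$ (ordinary versus relaxed) and at $\bar\fp$ (ordinary versus strict), the sequence relates their characteristic ideals through local cohomology at these two primes. By Tate local duality, the local terms translate into principal ideals generated by images of classes in the compact Iwasawa cohomology $S_{\rel,\str}(E/K_\infty)$ under local Coleman-type maps at $\fp$ and $\bar\fp$.

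Next I would input the Beilinson--Flach class $\mathbf{BF}\in H^1(K,T_pE\otimes\Lambda_K)$ and apply the explicit reciprocity law. The image of $\mathbf{BF}$ under the dual exponential at $\bar\fp$ computes $\CL_p^{\mathrm{PR}}(E/K)$ up to a unit in $\Lambda_K$, while its image under the Perrin-Riou regulator at $\fp$ computes $\CL_p^{\Gr}(E/K)$ up to a unit in $\Lambda_K^\ur$. Viewing $\mathbf{BF}$ as a global class in $S_{\rel,\str}(E/K_\infty)$ and combining these two reciprocity statements with the comparison of Selmer groups from the first step yields an identity, valid up to principal factors generated by the local Coleman images of $\mathbf{BF}$, of the form
\begin{equation*}
\Char_{\Lambda_K}(\CX_{\CF_\ord}(E/K_\infty)) \cdot (\CL_p^{\Gr}(E/K)) \;=\; \Char_{\Lambda_K}(\CX_{\CF_\Gr}(E/K_\infty)) \cdot (\CL_p^{\mathrm{PR}}(E/K)),
\end{equation*}
in $\Lambda_K^\ur$. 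Inverting the multiplicative set $S$ removes any remaining discrepancy and the common factor cancels, so that divisibility (1) becomes equivalent to divisibility (2). Running the same argument symmetrically yields the statement for the opposite divisibilities.

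The main obstacle is the precise bookkeeping of the local factors appearing on each side of the reciprocity law, so that the terms contributed by the local Coleman images of $\mathbf{BF}$ really do match in the two displayed products; this is the technical core of \cite{BSTW}. Since the present theorem only asserts an inclusion after localization at an arbitrary nontrivial $S\subset\Lambda_K$, any unit or pseudo-null discrepancy in the unlocalized identity becomes harmless, and the equivalence follows directly from the cited results.
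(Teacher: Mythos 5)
Your proposal is correct and follows essentially the same approach as the paper, which simply cites \cite[Proposition 9.18]{BSTW} and \cite[Proposition 3.2.1]{CGS} and notes the argument carries over; the ingredients you identify (Poitou--Tate comparison of the two Selmer groups, the Beilinson--Flach class, and the two explicit reciprocity laws at $\fp$ and $\bar\fp$) are exactly what those cited proofs use.
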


\begin{proof}
This result is essentially \cite[Proposition 9.18]{BSTW} in the two-variable (or $\cdot=\emptyset$ in \emph{loc.\ cit.}) case, building on explicit reciprocity laws for the Beilinson-Flach classes and global Poitou-Tate duality. 
See also \cite[Proposition 4.1.3]{burungale2024base} and \cite[Proposition 4.2.1]{CGS}.
\end{proof}

By Theorem \ref{2_var} and Corollary \ref{BDP1}, there exists a nontrivial multiplicative set $S\subset \Lambda_K^+\subset\Lambda_K$ such that
\[
S^{-1}\Char_{\Lambda_K}(\CX_{\CF_\Gr}(E/K_\infty))\Lambda_K^{\ur}\subset(\CL_p^{\Gr}(E/K)).
\]
We may assume $S$ is generated by prime elements in the unique factorization domain $\Lambda_K$. For a height-one prime $P\subset \Lambda_K^{\ur}$ with $P\cap S\neq \emptyset$, we have $P=P^+\Lambda_K^{\ur}$ for some prime $P^+\subset \Lambda_K^{\ur,+}$. However, by \cite[Theorem B]{hsieh2014special} and Proposition \ref{comp},
\[
\mu(\CL_p^{\Gr}(E/K)^-)=\mu(\CL_p^{\mathrm{BDP}}(E/K))=0,
\]
where $\mu(\cdot)$ denotes the $\mu$-invariant. Hence, we have
\[
\ord_{P}(\CL_p^{\Gr}(E/K))=0
\]
if $P\subset \Lambda_K^{\ur}$ is a height-one prime of the form $P=P^+\Lambda_K^{\ur}$ for some $P^+\subset \Lambda_K^{\ur,+}$. It follows that
\[
\Char_{\Lambda_K}(\CX_{\CF_\Gr}(E/K_\infty))\Lambda_K^{\ur}\subset(\CL_p^{\Gr}(E/K)).
\]

Moreover, if condition (\ref{Imag}) holds, we can establish that part (1) of Conjecture \ref{mainconj} is true. The argument is analogous to that of \cite[Theorem 3.30]{skinner2014iwasawa}, employing  the results of Kato in \cite[Theorem 17.4]{Kato}, Lemma \ref{sum}, and a commutative algebra lemma from \cite[Lemma 3.2]{skinner2014iwasawa}. The validity of part (1) of the conjecture, in turn, implies that part (2) also holds.
We note that while the original proof in \cite{skinner2014iwasawa} assumes the stronger condition $\Im \rho_{E}\supset \SL_2(\BZ_p)$, this requirement can be relaxed to our condition (\ref{Imag}), as is discussed in the final paragraph of \cite[page 187]{ski2016}.

\section{Applications}
\subsection{Cyclotomic main conjectures}

Let $E/\BQ$ be an elliptic curve of conductor $N$, and let $p>2$ be a prime at which $E$ has good ordinary reduction. 
Assume that the residual representation $\bar{\rho}_{E}: G_\BQ \ra \Aut(E[p])$ is irreducible.  

\begin{conj}[Mazur's main conjecture]\label{cyc_main}
    $\CX_{\CF_\ord}(E/\BQ_\infty)$ is $\Lambda_\BQ$-torsion and
    \[\Char_{\Lambda_\BQ}(\CX_{\CF_\ord}(E/\BQ_\infty))=(\CL_p^{\mathrm{MSD}}(E/\BQ)).\]
\end{conj}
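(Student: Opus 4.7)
The plan is to deduce Conjecture \ref{cyc_main} from the two-variable results of this paper via cyclotomic descent from an auxiliary imaginary quadratic field $K$. By Kato's theorem \cite[Theorem 17.4]{Kato}, under the hypothesis that $\bar\rho_E$ is irreducible one has the divisibility
\[\Char_{\Lambda_\BQ}(\CX_{\CF_\ord}(E/\BQ_\infty))\supset(\CL_p^{\mathrm{MSD}}(E/\BQ)),\]
so the remaining task is to establish the reverse inclusion and the torsionness.

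The first step is to choose an auxiliary imaginary quadratic field $K$ satisfying: (a) $p=\fp\bar\fp$ splits in $K$; (b) the Heegner hypothesis holds for $(E,K)$; (c) $\bar\rho_E|_{G_K}$ is absolutely irreducible; (d) the condition (\ref{Imag}) is satisfied; and (e) the quadratic twist $E^K$ has analytic rank zero, so that Mazur's conjecture for $E^K$ over $\BQ$ is already known (via Kato plus a standard $\mu=0$ input). Such $K$ exist by non-vanishing theorems for quadratic-twist $L$-values (Bump--Friedberg--Hoffstein, Murty--Murty) combined with Chebotarev density for the splitting and irreducibility conditions. Under these hypotheses Theorem \ref{mainthm1} gives the full equality
\[\Char_{\Lambda_K}(\CX_{\CF_\ord}(E/K_\infty))=(\CL_p^{\mathrm{PR}}(E/K)).\]

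Applying the projection $\pi^+\colon\Lambda_K\twoheadrightarrow\Lambda_\BQ$ induced by $\Gamma_K\twoheadrightarrow\Gamma_K^+\simeq\Gamma_\BQ$ and invoking the factorization
\[\pi^+(\CL_p^{\mathrm{PR}}(E/K))=\CL_p^{\mathrm{MSD}}(E/\BQ)\cdot\CL_p^{\mathrm{MSD}}(E^K/\BQ)\]
up to a unit (Proposition 1.2.4 of \cite{CGS}), the analytic side splits cleanly. On the algebraic side I would prove a cyclotomic control theorem—parallel to Lemma \ref{bdp_des} but killing the anticyclotomic variable $\gamma^- -1$—producing a pseudo-isomorphism
\[\CX_{\CF_\ord}(E/K_\infty)/(\gamma^--1)\CX_{\CF_\ord}(E/K_\infty)\sim\CX_{\CF_\ord}(E/\BQ_\infty)\oplus\CX_{\CF_\ord}(E^K/\BQ_\infty)\]
of $\Lambda_\BQ$-modules, the right-hand decomposition coming from the $\pm$-eigenspace splitting under the $\Gal(K/\BQ)$-action (valid because $p>2$). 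Combined with the known equality of characteristic ideals for $E^K$ under (e), this extracts the desired inclusion $\Char_{\Lambda_\BQ}(\CX_{\CF_\ord}(E/\BQ_\infty))\subset(\CL_p^{\mathrm{MSD}}(E/\BQ))$ and, together with Kato, completes the proof.

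The main obstacle is twofold. First, engineering the auxiliary field $K$ to satisfy conditions (a)--(e) simultaneously; (d) and (e) are the most delicate to arrange in tandem with the Heegner hypothesis and the splitting of $p$, and one must verify that the three density/non-vanishing inputs can be combined without conflict. Second, the control-theorem step must carefully track Euler factors $P_\fq(\varepsilon_\BQ(\Frob_\fq^{-1}))$ at primes $\fq\in\Sigma$ ramifying in $K/\BQ$, in the spirit of Lemma \ref{loc} and Corollary \ref{impr}, so that the imprimitive and primitive versions of the main conjecture match across the descent; these Euler factors must contribute equally on the analytic side via the factorization, and the usual cohomological vanishing arguments (using absolute irreducibility of $\bar\rho_E|_{G_K}$) must be invoked to suppress error terms from the Hochschild--Serre spectral sequence for $\Gal(K_\infty/\BQ_\infty)$.
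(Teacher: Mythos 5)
Your overall strategy—descent from an auxiliary imaginary quadratic field $K$ via the projection $\pi^+\colon\Lambda_K\to\Lambda_\BQ$ and the factorization $\mathcal{L}_p^{\mathrm{PR}}(E/K)^+=\mathcal{L}_p^{\mathrm{MSD}}(E/\BQ)\mathcal{L}_p^{\mathrm{MSD}}(E^K/\BQ)$—is the same as the paper's (which follows Skinner--Urban, Theorem~3.33). However, your condition (e), requiring $E^K$ to have analytic rank zero, creates a genuine gap. Under the Heegner hypothesis (condition (b)) one has $\varepsilon(E/K)=-1$, and since $L(E/K,s)=L(E/\BQ,s)\,L(E^K/\BQ,s)$ this forces $\varepsilon(E/\BQ)\cdot\varepsilon(E^K/\BQ)=-1$. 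Thus when $\varepsilon(E/\BQ)=+1$ (the even-rank case for $E$, which Theorem~\ref{cyc} must cover), $E^K$ automatically has sign $-1$ and odd analytic rank, so (e) is \emph{impossible} to arrange simultaneously with the Heegner hypothesis. Your argument therefore fails for exactly half the cases.

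The repair is that (e) is not needed at all, and insisting on it obscures the logic. The Skinner--Urban descent does not require the full Mazur main conjecture for $E^K$; it only needs Kato's one-sided divisibility $\Char_{\Lambda_\BQ}(\CX_{\CF_\ord}(E^K/\BQ_\infty))\supset(\mathcal{L}_p^{\mathrm{MSD}}(E^K/\BQ))$, which holds because $\bar\rho_{E^K}=\bar\rho_E\otimes\chi_K$ is irreducible whenever $\bar\rho_E$ is. After descent and the factorization, the two-variable inclusion from Theorem~\ref{mainthm1} gives containment of the \emph{product} of the two cyclotomic characteristic ideals in the product of the two $p$-adic $L$-functions; Kato's divisibilities for \emph{both} $E$ and $E^K$ then squeeze this to equality for each factor simultaneously, with no need to know $E^K$'s analytic rank. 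Two further issues: condition (d) is misplaced, since (\ref{Imag}) is a hypothesis on $(E,p)$ alone (it concerns the image of $\rho_E$ restricted to $G_{\BQ(\mu_{p^\infty})}$) and cannot be engineered by varying $K$; and you invoke Kato's divisibility integrally without noting that it requires precisely (\ref{Imag})—this is why the paper's Theorem~\ref{cyc} only asserts the integral equality under (\ref{Imag}) and gives the rational version otherwise, rather than the unconditional Conjecture~\ref{cyc_main} you claim to prove.
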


As in \cite[Theorem 3.29]{skinner2014iwasawa}, we have the following result. 

\begin{thm}\label{cyc}
    $\CX_{\CF_\ord}(E/\BQ_\infty)$ is $\Lambda_\BQ$-torsion and
    \[
    \Char_{\Lambda_\BQ}(\CX_{\CF_\ord}(E/\BQ_\infty))\otimes\BQ_p=(\CL_p^{\mathrm{MSD}}(E/\BQ))
    \]
    in $\Lambda_\BQ\otimes \BQ_p$. 
    Moreover, if condition (\ref{Imag}) holds, we have
    \[
    \Char_{\Lambda_\BQ}(\CX_{\CF_\ord}(E/\BQ_\infty))=(\CL_p^{\mathrm{MSD}}(E/\BQ))
    \]
    in $\Lambda_\BQ$.
\end{thm}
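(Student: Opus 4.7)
The plan is to descend from the two-variable result of Theorem \ref{mainthm1} to the cyclotomic line, then combine the resulting one-sided divisibility with Kato's divisibility in the opposite direction. The imaginary quadratic field $K$ will be chosen, via the standard Moret-Bailly type argument, so that $p$ splits in $K$, $(E,K)$ satisfies the Heegner hypothesis, and the twisted curve $E^K$ has analytic rank compatible with what is needed (in particular, so that $\bar\rho_{E}|_{G_K}$ remains absolutely irreducible); since $\bar\rho_E$ is irreducible and $p>2$, the absolute irreducibility follows from Serre as noted in the excerpt, so Theorem \ref{mainthm1} (1) applies to $(E,K)$.

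First I would invoke Theorem \ref{mainthm1}~(1) to obtain
\[\Char_{\Lambda_K}\bigl(\CX_{\CF_\ord}(E/K_\infty)\bigr)\subset\bigl(\CL_p^{\mathrm{PR}}(E/K)\bigr)\]
in $\Lambda_K\otimes\BQ_p$, and under (\ref{Imag}) integrally in $\Lambda_K$. Next I would prove a control/descent statement along the anticyclotomic variable analogous to Lemma \ref{bdp_des}: the natural map
\[\CX_{\CF_\ord}(E/K_\infty)/(\gamma^--1)\CX_{\CF_\ord}(E/K_\infty)\longrightarrow\CX_{\CF_\ord}(E/K_\infty^+)\]
is a pseudo-isomorphism of $\Lambda_K^+$-modules (this follows from the same type of Poitou–Tate and local cohomology argument as for Lemma \ref{bdp_des}, using Lemma \ref{loc}~(1)). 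Specializing along $\Gamma_K\twoheadrightarrow\Gamma_K^+$ and using that the image of $\CL_p^{\mathrm{PR}}(E/K)$ is $\CL_p^{\mathrm{PR}}(E/K)^+$, I obtain
\[\Char_{\Lambda_K^+}\bigl(\CX_{\CF_\ord}(E/K_\infty^+)\bigr)\subset\bigl(\CL_p^{\mathrm{PR}}(E/K)^+\bigr).\]
By the factorization $\CL_p^{\mathrm{PR}}(E/K)^+=\CL_p^{\mathrm{MSD}}(E/\BQ)\cdot\CL_p^{\mathrm{MSD}}(E^K/\BQ)$ up to a unit (Proposition following the cyclotomic $p$-adic $L$-function definition) and the Shapiro-type isomorphism $\CX_{\CF_\ord}(E/K_\infty^+)\sim \CX_{\CF_\ord}(E/\BQ_\infty)\oplus\CX_{\CF_\ord}(E^K/\BQ_\infty)$ (identifying $\Gamma_K^+$ with $\Gamma_\BQ$ and using the action of $\Gal(K/\BQ)$ to split into $\pm$-eigenspaces), this yields
\[\Char_{\Lambda_\BQ}\bigl(\CX_{\CF_\ord}(E/\BQ_\infty)\bigr)\cdot\Char_{\Lambda_\BQ}\bigl(\CX_{\CF_\ord}(E^K/\BQ_\infty)\bigr)\subset\bigl(\CL_p^{\mathrm{MSD}}(E/\BQ)\bigr)\cdot\bigl(\CL_p^{\mathrm{MSD}}(E^K/\BQ)\bigr).\]

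To close the argument I would apply Kato's theorem \cite[Theorem 17.4]{Kato}, which for each of $E$ and $E^K$ gives the opposite divisibility after tensoring with $\BQ_p$:
\[\bigl(\CL_p^{\mathrm{MSD}}(E/\BQ)\bigr)\subset\Char_{\Lambda_\BQ}\bigl(\CX_{\CF_\ord}(E/\BQ_\infty)\bigr)\otimes\BQ_p,\]
and similarly for $E^K$. Combining the two divisibilities (the product inclusion above together with Kato's inclusions for both $E$ and $E^K$) forces equality factor by factor in $\Lambda_\BQ\otimes\BQ_p$, proving the first assertion of Theorem \ref{cyc}. Under the additional hypothesis (\ref{Imag}), Kato's divisibility holds integrally, and Theorem \ref{mainthm1}~(1) also holds integrally; the commutative algebra lemma \cite[Lemma 3.2]{skinner2014iwasawa} (used exactly as in the proof of \cite[Theorem 3.30]{skinner2014iwasawa}) then upgrades the inclusions to equalities in $\Lambda_\BQ$.

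The main obstacle I anticipate is the control theorem along the cyclotomic variable: one must ensure that the specialization map on Selmer groups has pseudo-null kernel and cokernel, and that the error terms coming from the local conditions at $\fp,\bar\fp$ do not introduce extra factors into the characteristic ideal. A subsidiary difficulty is choosing $K$ so that all required conditions hold simultaneously—absolute irreducibility of $\bar\rho_E|_{G_K}$, splitting of $p$, the Heegner hypothesis, and nonvanishing of $L(E^K,1)$ at the appropriate order so that the factorization argument gives a nontrivial constraint on each factor—but this is standard and can be arranged by the usual density/nonvanishing results (e.g.\ Friedberg–Hoffstein or Bump–Friedberg–Hoffstein).
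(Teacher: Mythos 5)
Your proposal is correct and follows essentially the same route as the paper, which in its proof of Theorem~\ref{cyc} simply invokes ``Similarly to \cite[Theorem 3.33]{skinner2014iwasawa}, by Theorem \ref{mainthm1} and descent arguments''—precisely the combination you spell out: specialize the two-variable divisibility of Theorem~\ref{mainthm1}~(1) to the cyclotomic line, use the factorization $\CL_p^{\mathrm{PR}}(E/K)^+=\CL_p^{\mathrm{MSD}}(E/\BQ)\cdot\CL_p^{\mathrm{MSD}}(E^K/\BQ)$ together with the $\pm$-eigenspace decomposition of the Selmer group, then sandwich against Kato's opposite divisibility for both $E$ and $E^K$ (rational in general, integral under (\ref{Imag})). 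Your acknowledgment of the control-theorem technicalities and the choice of $K$ via Friedberg--Hoffstein/Moret-Bailly is exactly where the implicit content of the paper's one-line citation to SU Theorem 3.33 resides.
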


\begin{lem}
    Let $K$ be an imaginary quadratic field, and $I\subset \Lambda_K$  the kernel of the natural projection ${\Lambda_K\ra \Lambda^+_K}$.
   Suppose that the residual representation $\bar{\rho}_{E}|_{G_K}: G_K \ra \Aut(E[p])$ is irreducible. 
   Then 
   \[ \Char_{\Lambda_\BQ}(\CX_{\CF_\ord}(E/\BQ_\infty)) \cdot \Char_{\Lambda_\BQ}(\CX_{\CF_\ord}(E^K/\BQ_\infty))\subset \Char_{\Lambda_K}(\CX_{\CF_\ord}(E/K_\infty)) \mod  I\]
   in $\Lambda_K^+\simeq \Lambda_\BQ$.
\end{lem}
\begin{proof}
        Let $\Sigma$ be a set of places of $K$ containing all places dividing $pN\infty$ as before, $\Sigma_0:=\{\fq\in \Sigma:\fq\nmid p\}$.
    Let $M:=T_pE\otimes \Lambda_K^\vee$ and $N:=T_pE\otimes \Lambda_K^{+,\vee}$ for simplicity.

    We have the following commutative diagram of exact sequences:
    \[\begin{tikzcd}
0 \arrow[r] & {H^1_{\mathcal{F}_{\mathrm{ord}}}(K, N)} \arrow[r] \arrow[d, "f"] & {H^1_{\mathcal{F}_{\mathrm{ord}}^\Sigma}(K, N)} \arrow[r, "s"] \arrow[d, "g"] & {\oplus_{\fq\in\Sigma_0}H^1(G_{K_\mathfrak{q}},N)} \arrow[d, "h"] \\
0 \arrow[r] & {H^1_{\mathcal{F}_{\mathrm{ord}}}(K, M)[I]} \arrow[r]                & {H^1_{\mathcal{F}_{\mathrm{ord}}^\Sigma}(K, M)[I]} \arrow[r]                & {\oplus_{\fq\in \Sigma_0}H^1(G_{K_\mathfrak{q}},M)[I]}               
\end{tikzcd},
\]
where $f,g,h,s$ are the natural maps.  We claim that $h$ is injective. To see this, for any primes $\fq \in \Sigma_0$, we consider the following commutative diagram of inflation-restriction sequences
 \[\begin{tikzcd}
0 \arrow[r] & {H^1(G_{K_\fq}/I_\fq, N^{I_\fq})} \arrow[r] \arrow[d] & {H^1(G_{K_\fq}, N)} \arrow[r] \arrow[d, "h_\fq"] & {H^1(I_\fq,N)} \arrow[d, "t_\fq"] \\
0 \arrow[r] & {H^1(G_{K_\fq/}I_\fq, M^{I_\fq})[I]} \arrow[r]                & {H^1(G_{K_\fq}, M)[I]} \arrow[r]                & {H^1(I_\fq,M)[I]}               
\end{tikzcd},
\]
where $h_\fq, t_\fq$ are the natural maps. Since $H^1(G_{K_\fq}/I_\fq, N^{I_\fq})=0$ by Lemma \ref{loc} (1), to prove that $h_\fq$ is injective, it suffices to show that $t_\fq$ is injective. Fix a topological generator $\gamma^-$ of $\Gal(K_\infty^-/K)\simeq \Gal(K_\infty/K_\infty^+)$, then we have $I=(\gamma^--1)\subset \Lambda_K$ as an ideal.
Then we have the long exact sequence
\[H^0(I_\fq, M)\xrightarrow{\cdot (\gamma^--1)}H^0(I_\fq, M)\ra H^1(I_\fq, N)\xrightarrow{t_\fq} H^1(I_\fq, M)\]
induced by the short exact sequence
\[0\ra N\ra M\xrightarrow{\cdot (\gamma^--1)}M\ra 0,\]
hence to prove the injectivity of $t_\fq$, it suffices to show $H^0({I_\fq,M})/I H^0({I_\fq,M})=0$. However, since $\fq$ is unramified in $K_\infty/K$, let $W:=T_p\otimes \BQ_p/\BZ_p$, then we have that
\[H^0({I_\fq,M})= (T_pE\otimes \Hom_{\mathrm{cont}}(\Lambda_K,\BQ_p/\BZ_p))^{I_\fq}\simeq \Hom_{\mathrm{cont}}(\Lambda_K,W)^{I_\fq}=\Hom_{\mathrm{cont}}(\Lambda_K,W^{I_\fq}).\]
Since
\[\begin{aligned}
    \Hom_{\mathrm{cont}}(\Lambda_K,W^{I_\fq})&\simeq \Hom_{\mathrm{cont}}(\Lambda_K,\Hom(\Hom(W^{I_\fq},\BQ_p/\BZ_p),\BQ_p/\BZ_p))\\
    &\simeq \Hom_{\mathrm{cont}}(\Hom(W^{I_\fq},\BQ_p/\BZ_p)\otimes \Lambda_K,\BQ_p/\BZ_p),
\end{aligned}\]
we have
\[\Hom(\Hom_{\mathrm{cont}}(\Lambda_K,W^{I_\fq}),\BQ_p/\BZ_p)\simeq \Hom(W^{I_\fq},\BQ_p/\BZ_p)\otimes \Lambda_K.\]

Hence
\[\Hom(H^0({I_\fq,M})/I H^0({I_\fq,M}),\BQ_p/\BZ_p)\simeq \Hom(W^{I_\fq},\BQ_p/\BZ_p)\otimes \Lambda_K[I]=0.\]
Therefore $H^0({I_\fq,M})/I H^0({I_\fq,M})=0$, which completes the proof of our claim.

Let $A:=\Im s \subset \oplus_{\fq\in\Sigma_0}H^1(G_{K_\mathfrak{q}},N)$.
Then by the snake lemma,  we have the following exact sequence
\[0\ra \ker f\ra \ker g\ra \ker h|_A\ra \coker f\ra \coker g.\]
By \cite[Proposition 3.9]{skinner2014iwasawa}, Remark \ref{coin},  Lemma \ref{trivial} and Nakayama's lemma, we have that the map  $g$  is an  isomorphism. 
Hence we have $\ker f=0$ and $\coker f \simeq  \ker h|_A=0$, which implies that the map $f$ is an isomorphism. Then we have  that the natural $\Lambda_K^+$-module homomorphism 
    \[\mathcal{X}_{\mathcal{F}_\ord}(E/K_\infty)/I \mathcal{X}_{\mathcal{F}_\ord}(E/K_\infty)\rightarrow\mathcal{X}_{\mathcal{F}_\ord}(E/K_\infty^+)\]
    is an isomorphism. 
    By \cite[Corollary 3.8 (ii)]{skinner2014iwasawa}, we have that 
     \[ \Char_{\Lambda_K^+}(\CX_{\CF_\ord}(E/K_\infty^+)) \subset \Char_{\Lambda_K}(\CX_{\CF_\ord}(E/K_\infty)) \mod  I\]
   in $\Lambda_K^+$. Together with  Lemma \ref{sum}, we complete the proof.
\end{proof}

\begin{proof}[Proof of Theorem \ref{cyc}]
    By a result of Serre (\cite[Section 3.3]{Serre}), the irreducibility of $\bar{\rho}_{E}$ implies that it is absolutely irreducible when $p>2$.
     Choose an imaginary quadratic field $K$ such that all primes dividing $pN$ split in $K$ and  the restriction $\bar{\rho}_{E}|_{G_K}$ remains absolutely irreducible. Let $I\subset \Lambda_K$ be the kernel of the natural projection ${\Lambda_K\ra \Lambda^+_K}$. Applying Theorem \ref{mainthm1}, Proposition \ref{PR-MSD}, and the preceding lemma, we obtain
    \begin{align*}
        &\Char_{\Lambda_\BQ}(\CX_{\CF_\ord}(E/\BQ_\infty)) \cdot \Char_{\Lambda_\BQ}(\CX_{\CF_\ord}(E^K/\BQ_\infty))\subset \Char_{\Lambda_K}(\CX_{\CF_\ord}(E/K_\infty)) \mod  I\\
         &\subset (\CL_p^\mathrm{PR}(E/K)) \mod I = (\mathcal{L}_p^{\mathrm{PR}}(E/K)^+) = (\mathcal{L}_p^{\mathrm{MSD}}(E/\mathbb{Q}) )\cdot (\mathcal{L}_p^{\mathrm{MSD}}(E^K/\mathbb{Q}))
    \end{align*}
    in $\Lambda_\BQ\simeq \Lambda_K^+$.
    On the other hand, by \cite[Theorem 17.14]{Kato}, we have that
    \[ (\mathcal{L}_p^{\mathrm{MSD}}(E/\mathbb{Q}))\subset \Char_{\Lambda_\BQ}(\CX_{\CF_\ord}(E/\BQ_\infty)), \quad (\mathcal{L}_p^{\mathrm{MSD}}(E^K/\mathbb{Q}))\subset\Char_{\Lambda_\BQ}(\CX_{\CF_\ord}(E^K/\BQ_\infty))\]
    in $\Lambda_\BQ\otimes \BQ_p$ and even in $\Lambda_\BQ$ under the condition (\ref{Imag}) (See the discussions in the final paragraph of \cite[page 187]{ski2016}, as noted previously). 
    If one of the above inclusions is not an equality, then 
       \[(\mathcal{L}_p^{\mathrm{MSD}}(E/\mathbb{Q})) \cdot(\mathcal{L}_p^{\mathrm{MSD}}(E^K/\mathbb{Q}))\subsetneq \Char_{\Lambda_\BQ}(\CX_{\CF_\ord}(E/\BQ_\infty)) \cdot\Char_{\Lambda_\BQ}(\CX_{\CF_\ord}(E^K/\BQ_\infty)),\]
       which contradicts the inclusion established above:
       \[\Char_{\Lambda_\BQ}(\CX_{\CF_\ord}(E/\BQ_\infty)) \cdot \Char_{\Lambda_\BQ}(\CX_{\CF_\ord}(E^K/\BQ_\infty))\subset (\mathcal{L}_p^{\mathrm{MSD}}(E/\mathbb{Q}) )\cdot (\mathcal{L}_p^{\mathrm{MSD}}(E^K/\mathbb{Q}))\]
       
     Therefore it must be that
       \[
    \Char_{\Lambda_\BQ}(\CX_{\CF_\ord}(E/\BQ_\infty))=(\CL_p^{\mathrm{MSD}}(E/\BQ))
    \]
    in $\Lambda_\BQ\otimes \BQ_p$ and even in $\Lambda_\BQ$ under the condition (\ref{Imag}).
\end{proof}

As a corollary, we obtain the following result concerning the cyclotomic analogue of the BDP main conjecture.

\begin{cor}\label{cyc-BDP}
    If $\CL_p^{\mathrm{Gr}}(E/K)^+$ is nontrivial, then
    $\CX_{\CF_\Gr}(E/K_\infty^+)$ is $\Lambda_K^+$-torsion and
    \[
    \Char_{\Lambda_K^+}(\CX_{\CF_\Gr}(E/K_\infty^+))\Lambda_K^{\ur,+}\otimes\BQ_p=(\CL_p^{\mathrm{Gr}}(E/K)^+)
    \]
    in $\Lambda_K^{\ur,+}\otimes \BQ_p$. 
    Moreover, if condition (\ref{Imag}) holds, we have
    \[
    \Char_{\Lambda_K^+}(\CX_{\CF_\Gr}(E/K_\infty^+))\Lambda_K^{\ur,+}=(\CL_p^{\mathrm{Gr}}(E/K)^+)
    \]
    in $\Lambda_K^{\ur,+}$.
\end{cor}

\begin{proof}
    This result follows from a combination of Theorem \ref{cyc} (applied to $E$ and $E^K$),  Lemma \ref{sum}, Proposition \ref{PR-MSD}, \cite[Proposition 4.2.1]{CGS} (a cyclotomic analogue of Theorem \ref{2_var}, see also \cite[Proposition 9.18]{BSTW}), and Proposition \ref{non1}.
\end{proof}

\subsection{Anticyclotomic main conjectures}

Let $E/\BQ$ be an elliptic curve of conductor $N$, let $p>2$ be a prime such that $E$ has good ordinary reduction at $p$, and let $K$ be an imaginary quadratic field such that $p\CO_K = \fp \bar{\fp}$ splits in $K$ and the pair $(E, K)$ satisfies the Heegner hypothesis. 
Assume that the residual representation $\bar{\rho}_E|_{G_K}: G_K \ra \Aut(E[p])$ is irreducible.

\begin{conj}[BDP main conjecture] 
     $\CX_{\CF_\Gr}(E/K_\infty^-)$ is $\Lambda_K^-$-torsion and
    \[
    \Char_{\Lambda_K^-}(\CX_{\CF_\Gr}(E/K_\infty^-))\Lambda_K^{\ur,-}=(\CL_p^{\mathrm{BDP}}(E/K)).
    \]    
\end{conj}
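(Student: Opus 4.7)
The plan is to deduce the BDP main conjecture by descending Theorem \ref{mainthm1}(2) from $\Lambda_K$ to $\Lambda_K^-$ and then translating from Greenberg's to BDP's $p$-adic $L$-function via Proposition \ref{comp}; the opposite divisibility will come from the anticyclotomic Heegner-point Euler system. The key inputs are Lemma \ref{bdp_des} (control in the cyclotomic direction), Proposition \ref{comp}, and Hsieh's vanishing of the BDP $\mu$-invariant \cite{hsieh2014special}.

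First I would apply Lemma \ref{bdp_des} to $\CX_{\CF_\Gr}(E/K_\infty)$, which is $\Lambda_K$-torsion by Theorem \ref{mainthm1}(2). The resulting $\Lambda_K^-$-pseudo-isomorphism
\[
\CX_{\CF_\Gr}(E/K_\infty)\bigl/(\gamma^+-1)\CX_{\CF_\Gr}(E/K_\infty)\longrightarrow\CX_{\CF_\Gr}(E/K_\infty^-)
\]
simultaneously establishes that the target is $\Lambda_K^-$-torsion and that its characteristic ideal agrees, in $\Lambda_K^-$, with $\pi\bigl(\Char_{\Lambda_K}(\CX_{\CF_\Gr}(E/K_\infty))\bigr)$, where $\pi\colon\Lambda_K\twoheadrightarrow\Lambda_K^-$ is the projection $\gamma^+\mapsto 1$ (a standard specialization fact for torsion modules killed by a regular element, given the pseudo-null kernel packaged in the pseudo-isomorphism). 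Tensoring the inclusion of Theorem \ref{mainthm1}(2) with $\Lambda_K^{\ur}$, projecting along $\pi$, and then invoking Proposition \ref{comp} to identify $\CL_p^{\Gr}(E/K)^-\Lambda_K^{\ur,-}$ with $\CL_p^{\mathrm{BDP}}(E/K)\Lambda_K^{\ur,-}$, one obtains the divisibility
\[
\Char_{\Lambda_K^-}\bigl(\CX_{\CF_\Gr}(E/K_\infty^-)\bigr)\Lambda_K^{\ur,-}\subset\bigl(\CL_p^{\mathrm{BDP}}(E/K)\bigr).
\]
If in addition the hypothesis (\ref{Imag}) holds, Theorem \ref{mainthm1}(2) gives equality rather than divisibility in the two-variable conjecture, and the same descent then yields equality directly in the BDP conjecture, bypassing the Euler system input below.

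The main obstacle, in the absence of (\ref{Imag}), is the reverse divisibility, which cannot be extracted from the Eisenstein-congruence machinery underlying Theorem \ref{mainthm1}. To close the gap I would appeal to the anticyclotomic Heegner-point Euler system: Howard's Kolyvagin-system construction from big Heegner points, combined with the Castella--Hsieh explicit reciprocity law relating Heegner classes to $\CL_p^{\mathrm{BDP}}(E/K)$, and made integral in \cite{CGLS} via Hsieh's $\mu(\CL_p^{\mathrm{BDP}}(E/K))=0$, gives
\[
\bigl(\CL_p^{\mathrm{BDP}}(E/K)\bigr)\subset \Char_{\Lambda_K^-}\bigl(\CX_{\CF_\Gr}(E/K_\infty^-)\bigr)\Lambda_K^{\ur,-}
\]
under the Heegner hypothesis and the absolute irreducibility of $\bar{\rho}_E|_{G_K}$. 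The delicate point is precisely the integrality of the Kolyvagin-system bound, for which the vanishing of the $\mu$-invariant is essential; everything else is a familiar Euler system computation. Combining the two inclusions yields the BDP main conjecture.
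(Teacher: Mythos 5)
Your first inclusion is essentially the paper's: pass to $S^{-1}\Lambda_K$ via Theorem \ref{mainthm1}(2), descend with Lemma \ref{bdp_des}, and convert $\CL_p^{\Gr}$ to $\CL_p^{\mathrm{BDP}}$ via Proposition \ref{comp}. That part is right and matches the paper.

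There are, however, two real problems in the rest. First, the claim that under (\ref{Imag}) ``the same descent then yields equality directly in the BDP conjecture, bypassing the Euler system input'' is a gap. Specializing a $\Lambda_K$-characteristic ideal along $\gamma^+ - 1$ is a one-sided operation: for a torsion module $M$ with $\Char_{\Lambda_K}(M)=(g)$ and $\gamma^+-1\nmid g$, one only has $\Char_{\Lambda_K^-}(M/(\gamma^+-1)M)=(g\bmod\gamma^+-1)\cdot\Char_{\Lambda_K^-}(M[\gamma^+-1])$, and the $\gamma^+$-torsion submodule $M[\gamma^+-1]$ can be a nontrivial torsion $\Lambda_K^-$-module even when it is $\Lambda_K$-pseudo-null. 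Thus the two-variable equality only descends to the inclusion $\Char_{\Lambda_K^-}(\CX_{\CF_\Gr}(E/K_\infty^-))\Lambda_K^{\ur,-}\subset(\CL_p^{\mathrm{BDP}}(E/K))$; the opposite inclusion does not follow from the descent alone. The paper closes this gap by bringing in Mazur's cyclotomic main conjecture (Theorem \ref{cyc}) and the comparison argument of \cite[Section 12.2.1]{BSTW}, which is precisely designed to rule out the extra $(\gamma^+-1)$-torsion contribution.

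Second, your source for the reverse divisibility in the non-(\ref{Imag}) case is misattributed. The paper deduces the lower bound on the Selmer group from \cite[Theorem 5.5.2]{CGS} (a rational Heegner-point main conjecture divisibility, recorded here as Theorem \ref{HPMC}) together with the equivalence Theorem \ref{ant} between the Heegner-point and BDP main conjectures. Citing \cite{CGLS} for an integral Euler-system bound is wrong in this setting: that paper treats the Eisenstein (residually reducible) case, which is complementary to the irreducibility hypothesis in force here. Relatedly, your proposal is internally inconsistent: if the Heegner-point Euler system already gave the integral lower bound unconditionally, the separate integral argument under (\ref{Imag}) would be redundant. In fact the Euler-system input is only available rationally here, which is exactly why the paper needs the auxiliary cyclotomic argument to get the integral statement.
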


We define the compact Selmer group
\[
S_{\ord}(E/K_\infty^-) = \varprojlim_{n}~\varprojlim_{m}~\Sel_{p^m}(E/K_n^-),
\]
where $K_n^-$ is the subfield of $K_\infty^-$ with $[K_n^-:K]=p^n$ for each positive integer $n$.

Fix a modular parametrization $\pi: X_0(N)\rightarrow E$. 
In \cite{PR}, Perrin-Riou constructed an element $\kappa \in S_{\ord}(E/K_\infty^-)$ using the Kummer images of Heegner points on $X_0(N)$, which is non-torsion over $\Lambda_K^-$ by a result of Cornut-Vatsal \cite{CV}. 
She then formulated the anticyclotomic main conjecture as follows.

\begin{conj}[Heegner point main conjecture]
    The modules $S_{\ord}(E/K_\infty^-)$ and $\CX_{\CF_\ord}(E/K_\infty^-)$ are both of $\Lambda_K^-$-rank one, and
    \[
    \Char_{\Lambda_K^-}(\CX_{\CF_\ord}(E/K_\infty^-)_\tor)
    =\Char_{\Lambda_K^-}(S_{\ord}(E/K_\infty^-)/\Lambda_K^-\cdot\kappa)^2.
    \] 
\end{conj}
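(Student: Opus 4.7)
The plan is to derive the Heegner point main conjecture by chaining the two-variable Greenberg main conjecture (Theorem \ref{mainthm1}(2)) to its anticyclotomic specialization (the BDP main conjecture), and then to invoke the equivalence between the BDP main conjecture and the Heegner point main conjecture, along the lines of \cite{CGS}, \cite{BSTW}, and Castella--Hsieh's explicit reciprocity law.

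\textbf{Step 1 (Descent to the anticyclotomic tower).} Starting from Theorem \ref{mainthm1}(2), which under (\ref{Imag}) gives
\[
\Char_{\Lambda_K}(\CX_{\CF_\Gr}(E/K_\infty))\Lambda_K^{\ur}=(\CL_p^{\Gr}(E/K)),
\]
I would specialize by $\gamma^+-1$. By Lemma \ref{bdp_des}, the quotient $\CX_{\CF_\Gr}(E/K_\infty)/(\gamma^+-1)$ is pseudo-isomorphic to $\CX_{\CF_\Gr}(E/K_\infty^-)$; since pseudo-isomorphic torsion modules share characteristic ideals, the left-hand side specializes cleanly to $\Char_{\Lambda_K^-}(\CX_{\CF_\Gr}(E/K_\infty^-))$. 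On the $p$-adic $L$-function side, Proposition \ref{comp} identifies $\CL_p^{\Gr}(E/K)^-$ with $\CL_p^{\mathrm{BDP}}(E/K)$ up to a unit in $\Lambda_K^{\ur,-}$. One must check that the specialization does not pick up zero divisors, which follows since $\CL_p^{\mathrm{BDP}}(E/K)$ has vanishing $\mu$-invariant by Hsieh's theorem \cite{hsieh2014special}. The output is the BDP main conjecture:
\[
\Char_{\Lambda_K^-}(\CX_{\CF_\Gr}(E/K_\infty^-))\Lambda_K^{\ur,-}=(\CL_p^{\mathrm{BDP}}(E/K)).
\]

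\textbf{Step 2 (From BDP to Heegner point).} Next, I would invoke the equivalence between the BDP main conjecture and the Heegner point main conjecture established in \cite{CGLS} and \cite{BSTW}. The core input is Castella--Hsieh's explicit reciprocity law, which relates the image of $\kappa$ under composition of the Kummer map at $\fp$ with Perrin-Riou's big logarithm to (a factor times) $\CL_p^{\mathrm{BDP}}(E/K)$. Combined with Poitou-Tate global duality relating $S_{\ord}(E/K_\infty^-)$ and $\CX_{\ord}(E/K_\infty^-)$ to $\CX_{\CF_\Gr}(E/K_\infty^-)$ (mediated by the local cohomology at $\fp$), one obtains that both $S_\ord$ and $\CX_\ord$ have $\Lambda_K^-$-rank one (the rank-one statement for $S_\ord$ uses Cornut--Vatsal's non-triviality of $\kappa$ \cite{CV}) and that
\[
\Char_{\Lambda_K^-}(\CX_{\ord}(E/K_\infty^-)_\tor)=\Char_{\Lambda_K^-}(S_{\ord}(E/K_\infty^-)/\Lambda_K^-\cdot\kappa)^2,
\]
the square arising naturally from the symmetric appearance of the local term at $\fp$ on both the Selmer and $p$-adic $L$-function sides.

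\textbf{Main obstacle.} Step 2 is the delicate one: unpacking Poitou--Tate and feeding in the explicit reciprocity law to isolate the square relation requires tracking several local terms and comparing Fitting vs.\ characteristic ideals of the relevant quotients. Additionally, without assuming (\ref{Imag}) the input from Theorem \ref{mainthm1}(2) is only a divisibility (or an equality after inverting some element of $\Lambda_K^+$), in which case Step 1 yields only the analogous rational version of the BDP main conjecture, and Step 2 then produces only the corresponding rational version of the Heegner point main conjecture --- so the full integral equality in the conjecture appears to require (\ref{Imag}) (and the non-vanishing of the anticyclotomic $\mu$-invariants along the way).
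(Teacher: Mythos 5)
Your two-step skeleton (descend the two-variable Greenberg main conjecture to the anticyclotomic line, then transfer to the Heegner point main conjecture via an equivalence built from Castella--Hsieh's reciprocity law and Poitou--Tate duality) is in the right spirit and matches the paper's use of Lemma \ref{bdp_des}, Proposition \ref{comp} and Theorem \ref{ant}. But there are two genuine gaps, and the actual argument in the paper is routed quite differently around them.

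First, Step 1 as you state it does not go through: knowing $\Char_{\Lambda_K}(\CX_{\CF_\Gr}(E/K_\infty))\Lambda_K^{\ur} = (\CL_p^{\Gr}(E/K))$ and quotienting by $\gamma^+ - 1$ does \emph{not} automatically give the one-variable equality. Lemma \ref{bdp_des} identifies $\CX_{\CF_\Gr}(E/K_\infty)/(\gamma^+-1)$ with $\CX_{\CF_\Gr}(E/K_\infty^-)$ up to pseudo-isomorphism, but that is a statement about the Selmer modules, not about how characteristic ideals behave under the quotient. For a torsion $\Lambda_K$-module $M$ with $\Char_{\Lambda_K}(M)=(g)$, one only gets in general $\Char_{\Lambda_K^-}(M/(\gamma^+-1)M)\subset(g^-)$; pseudo-null submodules of $M$ killed by $\gamma^+-1$ can make the inclusion strict. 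So descent gives one divisibility, not the equality. This is precisely why the paper does not try to manufacture BDP directly from the Greenberg side.

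Second, and related, you never invoke the Euler-system input. In the paper the opposite divisibility of the Heegner point main conjecture, namely $\Char_{\Lambda_K^-}(\CX_\ord(E/K_\infty^-)_\tor)\supset\Char_{\Lambda_K^-}(S_\ord/\Lambda_K^-\kappa)^2$ rationally, is imported from \cite[Theorem 5.5.2]{CGS} (Theorem \ref{HPMC}), a Heegner point Kolyvagin-system result, and then the equivalence (Theorem \ref{ant}) pushes it back to give the $\supset$ direction of the BDP main conjecture. The two inclusions thus come from two independent sources (Greenberg descent on one side, CGS Euler system on the other), and the equality is the result of squeezing. Your proposal attempts to generate both inclusions from the descent alone, which fails at the point noted above. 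Finally, for the integral statement under (\ref{Imag}), the paper additionally invokes Mazur's cyclotomic main conjecture (Theorem \ref{cyc}, following \cite[Section 12.2.1]{BSTW}) to pin down the $\mu$-part and complete the descent; your outline mentions only (\ref{Imag}) and the anticyclotomic $\mu=0$ theorem, which is not sufficient.
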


\begin{thm}\label{ac}
    \begin{enumerate}
        \item $\CX_{\CF_\Gr}(E/K_\infty^-)$ is $\Lambda_K^-$-torsion and
        \[
        \Char_{\Lambda_K^-}(\CX_{\CF_\Gr}(E/K_\infty^-))\Lambda_K^{\ur,-}\otimes\BQ_p=(\CL_p^{\mathrm{BDP}}(E/K))
        \]
        holds in $\Lambda_K^{\ur,-} \otimes \BQ_p$. 
        Moreover, if the representation $\rho_E|_{G_K}: G_K \ra \Aut_{\BZ_p}(T_pE)$ is surjective, then 
        \[
        \Char_{\Lambda_K^-}(\CX_{\CF_\Gr}(E/K_\infty^-))\Lambda_K^{\ur,-}=(\CL_p^{\mathrm{BDP}}(E/K)).
        \]

        \item The modules $S_{\ord}(E/K_\infty^-)$ and $\CX_{\CF_\ord}(E/K_\infty^-)$ are both of $\Lambda_K^-$-rank one, and
        \[
        \Char_{\Lambda_K^-}(\CX_{\CF_\ord}(E/K_\infty^-)_\tor)\otimes\BQ_p
        =\Char_{\Lambda_K^-}(S_{\ord}(E/K_\infty^-)/\Lambda_K^-\cdot\kappa)^2 \otimes\BQ_p
        \]
        holds in $\Lambda_K^- \otimes \BQ_p$. 
        Moreover, if the representation $\rho_E|_{G_K}: G_K \ra \Aut_{\BZ_p}(T_pE)$ is surjective, then
        \[
        \Char_{\Lambda_K^-}(\CX_{\CF_\ord}(E/K_\infty^-)_\tor)
        =\Char_{\Lambda_K^-}(S_{\ord}(E/K_\infty^-)/\Lambda_K^-\cdot\kappa)^2.
        \]
    \end{enumerate}
\end{thm}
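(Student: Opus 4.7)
The strategy is to derive both parts of Theorem \ref{ac} from the already-established two-variable result Theorem \ref{mainthm1} (2), using specialization along the cyclotomic line and the known equivalence between the BDP and Heegner point main conjectures.

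For part (1), I would start from the two-variable divisibility
\[\Char_{\Lambda_K}(\CX_{\CF_\Gr}(E/K_\infty))\Lambda_K^{\ur}\subset(\CL_p^{\Gr}(E/K))\]
and specialize modulo $(\gamma^+ - 1)$. On the Selmer side, Lemma \ref{bdp_des} provides a pseudo-isomorphism
\[\CX_{\CF_\Gr}(E/K_\infty)/(\gamma^+-1)\CX_{\CF_\Gr}(E/K_\infty) \longrightarrow \CX_{\CF_\Gr}(E/K_\infty^-),\]
so the specialized characteristic ideal coincides, up to pseudo-null modules, with $\Char_{\Lambda_K^-}(\CX_{\CF_\Gr}(E/K_\infty^-))$. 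On the $L$-function side, the projection $\Lambda_K^{\ur}\twoheadrightarrow\Lambda_K^{\ur,-}$ carries $\CL_p^{\Gr}(E/K)$ to $\CL_p^{\Gr}(E/K)^-$, which generates the same ideal as $\CL_p^{\mathrm{BDP}}(E/K)$ by Proposition \ref{comp}. Together these yield the desired divisibility
\[\Char_{\Lambda_K^-}(\CX_{\CF_\Gr}(E/K_\infty^-))\Lambda_K^{\ur,-} \subset (\CL_p^{\mathrm{BDP}}(E/K)).\]
The reverse divisibility is already known rationally from the Heegner point Euler system together with the associated explicit reciprocity law (as in the work of Howard, Castella, and their successors), yielding rational equality. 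Under (\ref{Imag}), Theorem \ref{mainthm1} (2) itself furnishes integral equality in two variables, and the same descent then delivers integral equality in the BDP main conjecture.

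For part (2), I would invoke the equivalence between the BDP main conjecture and the Heegner point main conjecture, due essentially to Howard and refined by Castella, which is established via Perrin-Riou's explicit reciprocity law at $\fp$ together with Poitou-Tate duality. Under this equivalence, $\Char_{\Lambda_K^-}(\CX_{\ord}(E/K_\infty^-)_\tor)$ and $\Char_{\Lambda_K^-}(S_\ord(E/K_\infty^-)/\Lambda_K^-\cdot\kappa)^2$ are both identified with appropriate sides of $(\CL_p^{\mathrm{BDP}}(E/K))$, so the rational (respectively integral) equality in (1) transfers directly to the rational (respectively integral) equality in (2).

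The principal technical obstacle is the descent step, namely the interplay between characteristic ideals and specialization modulo $(\gamma^+ - 1)$. The pseudo-isomorphism of Lemma \ref{bdp_des}, combined with the standard commutative-algebra input already used in the proof of Theorem \ref{cyc} (cf.\ \cite[Theorem 3.33]{skinner2014iwasawa} and \cite[Lemma 3.2]{skinner2014iwasawa}), is exactly what is needed to control this, and once it is in place the remainder of the argument is essentially formal.
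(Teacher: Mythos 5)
Your proposal follows essentially the same route as the paper: descend the two-variable Greenberg divisibility to the anticyclotomic line via Lemma \ref{bdp_des} and Proposition \ref{comp}, use the Heegner-point Euler-system bound for the reverse rational inclusion, and pass between parts (1) and (2) via the equivalence between the BDP and Heegner-point main conjectures (the paper's Theorems \ref{HPMC} and \ref{ant}, quoted from \cite{CGS} and modelled on \cite{BCK}). One minor ordering difference: the paper applies the Euler-system bound on the Heegner-point side first and transfers to BDP via the equivalence Theorem \ref{ant}, whereas you first close (1) and then deduce (2). Both are fine.

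However, there is a genuine gap in your treatment of the integral statements. You write that under (\ref{Imag}), Theorem \ref{mainthm1}(2) gives integral two-variable equality and ``the same descent then delivers integral equality in the BDP main conjecture.'' That is not automatic. The descent of Lemma \ref{bdp_des} is only a pseudo-isomorphism, and more seriously, reducing a characteristic ideal in $\Lambda_K$ modulo $(\gamma^+ - 1)$ and comparing it to the characteristic ideal over $\Lambda_K^-$ requires control at that height-one prime; this is precisely why the paper first localizes at a multiplicative set $S \subset \Lambda_K^+$ with $\gamma^+ - 1 \nmid s$ for all $s \in S$ and only obtains the conclusion after tensoring with $\BQ_p$. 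To pass from rational to integral anticyclotomic equality, the paper explicitly invokes an additional ingredient: the integral Mazur main conjecture (Theorem \ref{cyc}) combined with the argument of \cite[Section 12.2.1]{BSTW}, which controls the potential $\mu$-invariant / error coming from the specialization. Your argument should incorporate this input; otherwise the integral claim does not follow from ``the same descent.''
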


We prove this theorem in the remainder of this subsection. 
We begin by recalling some results concerning the Heegner point main conjecture. 

\begin{thm}\label{HPMC}
    Assume that $E(K)[p]=0$.
    Then the modules $S_{\ord}(E/K_\infty^-)$ and $\CX_{\CF_\ord}(E/K_\infty^-)$ are both of $\Lambda_K^-$-rank one, and
    \[
    \Char_{\Lambda_K^-}(\CX_{\CF_\ord}(E/K_\infty^-)_\tor)\otimes\BQ_p
    \supset
    \Char_{\Lambda_K^-}(S_{\ord}(E/K_\infty^-)/\Lambda_K^-\cdot\kappa)^2\otimes\BQ_p
    \]
    holds in $\Lambda_K^-\otimes\BQ_p$. 
    Moreover, if the representation $\rho_E|_{G_K}: G_K\rightarrow\Aut_{\BZ_p}(T_pE)$ is surjective, then
    \[
    \Char_{\Lambda_K^-}(\CX_{\CF_\ord}(E/K_\infty^-)_\tor)
    \supset 
    \Char_{\Lambda_K^-}(S_{\ord}(E/K_\infty^-)/\Lambda_K^-\cdot\kappa)^2.
    \]
\end{thm}

\begin{proof}
   By \cite[Theorem 6.5.2]{CGS}, the modules $S_{\ord}(E/K_\infty^-)$ and $\CX_{\CF_\ord}(E/K_\infty^-)$ both have $\Lambda_K^-$-rank one, and the inclusion
    \[
    \Char_{\Lambda_K^-}(\CX_{\CF_\ord}(E/K_\infty^-)_\tor)\otimes\BQ_p
    \supset
    \Char_{\Lambda_K^-}(S_{\ord}(E/K_\infty^-)/\Lambda_K^-\cdot\kappa)^2\otimes\BQ_p
    \]
    holds in $\Lambda_K^-\otimes\BQ_p$. Moreover, if the representation $\rho_E|_{G_K}: G_K\rightarrow\Aut_{\BZ_p}(T_pE)$ is surjective, then by \cite[Theorem B]{How}, the inclusion
    \[
    \Char_{\Lambda_K^-}(\CX_{\CF_\ord}(E/K_\infty^-)_\tor)
    \supset 
    \Char_{\Lambda_K^-}(S_{\ord}(E/K_\infty^-)/\Lambda_K^-\cdot\kappa)^2
    \]
    holds in $\Lambda_K^-$ (i.e., without inverting $p$).
\end{proof}

Arguing as in \cite[Theorem 5.2]{BCK}, we obtain the following result.

\begin{thm}\label{ant}
    The module $\CX_{\CF_\Gr}(E/K_\infty^-)$ is $\Lambda_K^-$-torsion, and for every nontrivial multiplicative set $S\subset \Lambda_K^-$, the following statements are equivalent:
    \begin{enumerate}
        \item $S^{-1}\Char_{\Lambda_K^-}(\CX_{\CF_\Gr}(E/K_\infty^-))\Lambda_K^{\ur,-} \supset (\CL_p^{\mathrm{BDP}}(E/K))$.

        \item $S^{-1}\Char_{\Lambda_K^-}(\CX_{\CF_\ord}(E/K_\infty^-)_\tor) \supset 
        S^{-1}\Char_{\Lambda_K^-}(S_{\ord}(E/K_\infty^-)/\Lambda_K^-\cdot\kappa)^2$.
    \end{enumerate}
    The equivalence also holds with the divisibility reversed.
\end{thm}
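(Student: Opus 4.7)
The plan is to adapt the strategy of \cite[Theorem 5.2]{BCK}, which establishes the equivalence between the BDP and Heegner point main conjectures via Poitou-Tate global duality combined with the explicit reciprocity law identifying the image of the Heegner class $\kappa$ under localization at $\bar{\fp}$ with the BDP $p$-adic $L$-function.

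First, the $\Lambda_K^-$-torsion of $\CX_{\CF_\Gr}(E/K_\infty^-)$ follows from Theorem \ref{HPMC}: the Greenberg and ordinary Selmer structures differ only at the primes above $p$ (relaxing at $\fp$ and strictifying at $\bar\fp$), and a standard Poitou-Tate rank count combined with the $\Lambda_K^-$-rank one of $H^1_\ord(K_{\bar\fp}, T_pE\otimes\Lambda_K^-)$ and of $\CX_\ord(E/K_\infty^-)$ yields the torsion property on the Greenberg side.

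Next, Poitou-Tate global duality provides a five-term exact sequence of $\Lambda_K^-$-modules relating $\CX_{\CF_\Gr}(E/K_\infty^-)$ and $\CX_\ord(E/K_\infty^-)$ through the localization map
\[
\loc_{\bar\fp} : S_\ord(E/K_\infty^-) \lra H^1_\ord(K_{\bar\fp}, T_pE\otimes\Lambda_K^-),
\]
whose kernel is $H^1_{\CF_{\rel,\str}}(K, T_pE\otimes\Lambda_K^-)$ and is itself related (up to pseudo-null error) to $\CX_{\CF_\Gr}(E/K_\infty^-)$ by a second application of global duality. By Theorem \ref{HPMC}, $\kappa$ generates a rank one $\Lambda_K^-$-submodule of $S_\ord(E/K_\infty^-)$; consequently, after inverting any nontrivial multiplicative set $S\subset\Lambda_K^-$, the image of $\loc_{\bar\fp}$ is generated by $\loc_{\bar\fp}(\kappa)$. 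A careful tracking of characteristic ideals through the sequence then yields the $S$-localized identity
\[
S^{-1}\Char_{\Lambda_K^-}(\CX_{\CF_\Gr}(E/K_\infty^-))\Lambda_K^{\ur,-}\cdot S^{-1}\Char_{\Lambda_K^-}(S_\ord(E/K_\infty^-)/\Lambda_K^-\cdot\kappa)^2 = (\loc_{\bar\fp}(\kappa))\Lambda_K^{\ur,-}\cdot S^{-1}\Char_{\Lambda_K^-}(\CX_\ord(E/K_\infty^-)_\tor),
\]
where the square on the left arises from the fact that both the kernel of $\loc_{\bar\fp}$ on the compact side and the corresponding cokernel on the discrete side contribute a factor of $\Char_{\Lambda_K^-}(S_\ord(E/K_\infty^-)/\Lambda_K^-\kappa)$.

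Finally, the explicit reciprocity law of Castella–Hsieh (as formulated in \cite{CGLS} and \cite{BCK}) identifies $\loc_{\bar\fp}(\kappa)$, via the appropriate Perrin-Riou big logarithm, with $\CL_p^{\mathrm{BDP}}(E/K)$ up to a unit in $\Lambda_K^{\ur,-}$. Substituting into the displayed identity immediately converts the BDP-side divisibility (1) into the HPMC-side divisibility (2), and conversely; the same substitution handles the opposite direction of divisibility. The main obstacle is the careful Poitou-Tate bookkeeping — verifying that the error terms in the five-term sequence (kernels and cokernels at places in $\Sigma$ away from $p$ and at $\fp$) lie in the ideal generated by $S$, and that the image of $\kappa$ under the big logarithm map generates the claimed rank one module — which follows the analogous steps in \cite[Section 5]{BCK}.
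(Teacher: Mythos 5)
Your proposal follows essentially the same approach as the paper, which proves Theorem~\ref{ant} by citing \cite[Theorem 5.2]{BCK} — precisely the Poitou-Tate global duality argument you outline, combined with the explicit reciprocity law identifying the $\bar\fp$-localization of the Heegner class $\kappa$ (via the Perrin-Riou big logarithm) with $\CL_p^{\mathrm{BDP}}(E/K)$, and with the $\Lambda_K^-$-torsion of $\CX_{\CF_\Gr}(E/K_\infty^-)$ deduced from the rank-one statements in Theorem~\ref{HPMC}. One minor imprecision worth fixing in a final writeup: the kernel of $\loc_{\bar\fp}$ restricted to $S_\ord(E/K_\infty^-)$ is $H^1_{\CF_{\ord,\str}}(K,T_pE\otimes\Lambda_K^-)$ rather than $H^1_{\CF_{\rel,\str}}$, and the compact Selmer group dual to $\CX_{\CF_\Gr}=\CX_{\rel,\str}$ under Poitou-Tate is $S_{\str,\rel}$ rather than $S_{\rel,\str}$; these distinctions do not alter the characteristic-ideal bookkeeping in the rank-one setting, but the local conditions on both sides of each duality pairing should be recorded carefully.
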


Since $\CL_p^{\mathrm{BDP}}(E/K)$ is non-zero (\cite[Corollary 4.5]{BCK}), by the arguments presented in Section \ref{proof}, there exists a nontrivial multiplicative set $S\subset \Lambda_K^+ \subset \Lambda_K$ such that for any $s\in S$, $\gamma^+-1\nmid s$, and 
\[
S^{-1}\Char_{\Lambda_K}(\CX_{\CF_\Gr}(E/K_\infty))\Lambda_K^{\ur} \subset (\CL_p^{\Gr}(E/K)).
\]
By Lemma \ref{bdp_des} and standard properties of characteristic ideals (see, e.g., \cite[Corollary 3.8]{skinner2014iwasawa}), we have
\[
\Char_{\Lambda_K^-}(\CX_{\CF_\Gr}(E/K_\infty^-))\Lambda_K^{\ur,-} \otimes\BQ_p \subset (\CL_p^{\mathrm{BDP}}(E/K)).
\]
The reverse divisibility is established by combining Theorem \ref{HPMC} and Theorem \ref{ant}.

Moreover, if the representation $\rho_E|_{G_K}: G_K \ra \Aut_{\BZ_p}(T_pE)$ is surjective, then by Theorem \ref{mainthm1} and Lemma \ref{bdp_des}, 
\[
\Char_{\Lambda_K^-}(\CX_{\CF_\Gr}(E/K_\infty^-))\Lambda_K^{\ur,-} \subset (\CL_p^{\mathrm{BDP}}(E/K)),
\]
and the rest of the argument follows from Theorem \ref{HPMC} and Theorem \ref{ant} as before.

\begin{remark}
    Via \cite[Proposition 12.7]{BSTW}, the condition in Theorem \ref{ac} that $\rho_E|_{G_K}$ has full image can be weakened to condition (\ref{Imag}).
\end{remark}

\subsection{BSD conjectures}

Let $E/\BQ$ be an elliptic curve of conductor $N$, and let $p > 2$ be a prime such that $E$ has good ordinary reduction at $p$.

\begin{thm}
Let $r \leq 1$ be an integer. The following statements are equivalent:
\begin{enumerate}
    \item $\mathrm{rank}_\BZ E(\BQ) = r$ and $\#\Sha(E/\BQ) < \infty$;
    \item $\mathrm{corank}_{\BZ_p} \Sel_{p^\infty}(E/\BQ) = r$;
    \item $\ord_{s=1} L(E/\BQ, s) = r$.
\end{enumerate}
Under any of the above conditions, if condition (\ref{Imag}) also holds, then the $p$-part of the BSD formula for $E$ is valid, i.e.,
\[
\left| \frac{L^{(r)}(1, E)}{r! \cdot \Omega_E R_E} \right|_{p} 
= \left| \frac{\#\Sha(E)[p^\infty] \cdot \prod_{\ell \mid N} c_\ell(E)}{(\#E(\BQ)_\tor)^2} \right|_{p},
\]
where $R_E$ is the regulator of $E(\BQ)$, $\Omega_E$ is the \Neron period, $c_\ell(E)$ is the Tamagawa number at a prime $\ell$, and $|\cdot|_p$ denotes the $p$-adic absolute value.
\end{thm}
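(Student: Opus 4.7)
The plan is to reduce the three equivalences and the $p$-part BSD formula to Mazur's main conjecture (Theorem \ref{cyc}) and the Heegner point main conjecture (Theorem \ref{ac}(2)) proved above, combined with classical Euler system results. Throughout I work under the tacit assumption that $\bar\rho_E$ is irreducible, which for $p>2$ is equivalent to absolute irreducibility by Serre, so the main conjecture inputs apply. The equivalence $(1)\Leftrightarrow(3)$ for $r\leq 1$ is the theorem of Gross--Zagier and Kolyvagin. The implication $(3)\Rightarrow(2)$ is known as well: for $r=0$ it is Kato's theorem for $T_pE$, and for $r=1$ it follows from $(3)\Rightarrow(1)$ together with Kolyvagin's bound on $\Sha[p^\infty]$. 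Thus the content is the $p$-converse direction $(2)\Rightarrow(3)$.

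For $(2)\Rightarrow(3)$ with $r=0$: finiteness of $\Sel_{p^\infty}(E/\BQ)$, together with Theorem \ref{cyc} and the $\Lambda_\BQ$-control theorem at the trivial character, forces $\CL_p^{\mathrm{MSD}}(E/\BQ)(1)\neq 0$, and the interpolation formula then yields $L(E,1)\neq 0$. For $r=1$, I would choose an auxiliary imaginary quadratic field $K$ in which $p$ splits, such that $(E,K)$ satisfies the Heegner hypothesis, $\bar\rho_E|_{G_K}$ remains irreducible, and $L(E^K,1)\neq 0$; the simultaneous existence of such $K$ follows from standard non-vanishing results for twists (Bump--Friedberg--Hoffstein, Murty--Murty). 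Applying the $r=0$ case to $E^K$ yields $\corank\Sel_{p^\infty}(E^K/\BQ)=0$, and combining with the hypothesis gives $\corank_{\BZ_p}\Sel_{p^\infty}(E/K)=1$. Descending Theorem \ref{ac}(2) from $K_\infty^-$ to $K$ via Mazur's control theorem then shows that the image of the Heegner class $\kappa$ in $E(K)\otimes\BQ_p$ is nonzero, so the associated Heegner point on $E(K)$ is non-torsion. Gross--Zagier gives $L'(E/K,1)\neq 0$, and the factorization $L(E/K,s)=L(E/\BQ,s)L(E^K/\BQ,s)$ together with $L(E^K,1)\neq 0$ forces $\ord_{s=1}L(E,s)=1$.

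For the $p$-part BSD formula under $(\ref{Imag})$, the integral versions of Theorems \ref{cyc} and \ref{ac}(2) are now available. For $r=0$, the control theorem applied to the integral Mazur main conjecture, combined with the exact interpolation formula for $\CL_p^{\mathrm{MSD}}(E/\BQ)(1)$ and standard Tamagawa factor bookkeeping, yields the $p$-part BSD formula. For $r=1$, with $K$ chosen as above, I would follow the Jetchev--Skinner--Wan strategy: combine the integral Heegner point main conjecture with the Gross--Zagier height formula to express $\#\Sha(E/K)[p^\infty]\cdot\prod_v c_v(E/K)$ in terms of the Heegner point index, then descend the resulting identity from $E/K$ to $E/\BQ$ and $E^K/\BQ$ (using the rank one case for $E$ and the rank zero case for $E^K$) to extract the precise $p$-adic valuation predicted by BSD. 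The main technical obstacle is the rank one case: both the $p$-converse and the $p$-part BSD formula require a careful choice of $K$ compatible with all four conditions above, and a delicate descent from the two-variable and anticyclotomic main conjectures that preserves the non-triviality of $\kappa$ under specialization with the correct $p$-adic valuation on both sides.
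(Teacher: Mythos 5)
Your proposal is correct and follows the same overall strategy as the paper: reduce $(2)\Rightarrow(3)$ and the $p$-part BSD formula to the (rational, respectively integral) versions of Mazur's main conjecture (Theorem \ref{cyc}) and the anticyclotomic/Heegner-point main conjecture (Theorem \ref{ac}), via descent, Gross--Zagier, and the standard bookkeeping in JSW. The one genuine divergence is how you handle the rank-zero $p$-converse: you run it directly through the cyclotomic main conjecture (finiteness of $\Sel_{p^\infty}(E/\BQ)$ plus control plus Theorem \ref{cyc} forces $\CL_p^{\mathrm{MSD}}(E/\BQ)(1)\neq 0$), whereas the paper treats $r=0$ and $r=1$ uniformly by choosing an auxiliary imaginary quadratic $K$ with $\ord_{s=1}L(E^K,s)=1-r$ and Heegner hypothesis, so that $\corank_{\BZ_p}\Sel_{p^\infty}(E/K)=1$ always, and then applies the anticyclotomic main conjecture over $K$. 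Both routes work; your split is arguably more economical at $r=0$ since it avoids the auxiliary field, while the paper's uniform reduction to $\corank\Sel(E/K)=1$ keeps the whole argument in the Heegner framework of \cite{Wan}. One small slip: you attribute the full equivalence $(1)\Leftrightarrow(3)$ to Gross--Zagier and Kolyvagin, but those results give only $(3)\Rightarrow(1)$; the converse $(1)\Rightarrow(3)$ is not a consequence of their work and is obtained only because $(1)\Rightarrow(2)$ is trivial and $(2)\Rightarrow(3)$ is the $p$-converse you are proving. This does not affect the validity of your argument, since the implications you actually use close the cycle, but the phrasing should be corrected.
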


\begin{proof}
The implication (1) $\Rightarrow$ (2) is immediate.

The implication (3) $\Rightarrow$ (1) is established by the work of Gross-Zagier \cite{GZ} and Kolyvagin \cite{Kol}.

For the implication (2) $\Rightarrow$ (3), one can choose an imaginary quadratic field $K$ such that $\ord_{s=1} L(E^{K}, s) \leq 1$ and $(E, K)$ satisfies the Heegner hypothesis, as shown in \cite{FH}. Analogously to \cite[Theorem 1.9]{Wan}, by applying descent arguments to the rational part of Theorem \ref{ac}(2) and using the Gross-Zagier formula, we obtain
\[
\mathrm{corank}_{\BZ_p} \Sel_{p^\infty}(E/K) = 1 \quad \Rightarrow \quad \ord_{s=1} L(E/K, s) = 1,
\]
thus establishing (2) $\Rightarrow$ (3).

The $p$-part of the BSD formula in the rank zero case follows from the integral part of Theorem \ref{cyc}, combined with descent arguments (see \cite[Section 3.6.1]{skinner2014iwasawa} for details).

Similarly, by choosing an imaginary quadratic field $K$ as described above, the $p$-part of the BSD formula in the rank one case follows from the integral part of Corollary \ref{cyc-BDP}, \cite[Proposition 3.4.2]{CGS}, the fact that $\CL_p^{\mathrm{Gr}}(E/K)^+(1) = \CL_p^{\mathrm{Gr}}(E/K)^-(1) \neq 0$, and the descent arguments in \cite{JSW}.
\end{proof}

\section*{Conflict of interest}
On behalf of all authors, the corresponding author states that there is no conflict of interest.

\bibliographystyle{amsalpha}
\bibliography{ref}

@article{BCK,
  author   = {Burungale, Ashay and Castella, Francesc and Kim, Chan-Ho},
  title    = {A proof of {Perrin}-{Riou}'s {Heegner} point main conjecture},
  fjournal = {Algebra \& Number Theory},
  journal  = {Algebra Number Theory},
  issn     = {1937-0652},
  volume   = {15},
  number   = {7},
  pages    = {1627--1653},
  year     = {2021},
  language = {English},
  doi      = {10.2140/ant.2021.15.1627},
  keywords = {11R23,11F33},
  url      = {authors.library.caltech.edu/112092/},
  zbmath   = {7419507},
  zbl      = {1496.11133}
}

@misc{BSTW,
  author       = {Burungale, Ashay and Skinner, Christopher and Tian, Ye and Wan, Xin},
  title        = {Zeta elements for elliptic curves and applications},
  year         = {2024},
  howpublished = {Preprint, {arXiv}:2409.01350 [math.{NT}] (2024)},
  url          = {https://arxiv.org/abs/2409.01350},
  arxiv        = {arXiv:2409.01350}
}

@article {burungale2024base,
    AUTHOR = {Burungale, Ashay and Castella, Francesc and Skinner,
              Christopher},
     TITLE = {Base change and {I}wasawa main conjectures for {$\rm GL_2$}},
   JOURNAL = {Int. Math. Res. Not. IMRN},
  FJOURNAL = {International Mathematics Research Notices. IMRN},
      YEAR = {2025},
    NUMBER = {8},
     PAGES = {Paper No. rnaf082, 15},
      ISSN = {1073-7928,1687-0247},
   MRCLASS = {11R23},
  MRNUMBER = {4894178},
       DOI = {10.1093/imrn/rnaf082},
       URL = {https://doi.org/10.1093/imrn/rnaf082},
}

@article{CGLS,
  author     = {Castella, Francesc and Grossi, Giada and Lee, Jaehoon and
                Skinner, Christopher},
  title      = {On the anticyclotomic {I}wasawa theory of rational elliptic
                curves at {E}isenstein primes},
  journal    = {Invent. Math.},
  fjournal   = {Inventiones Mathematicae},
  volume     = {227},
  year       = {2022},
  number     = {2},
  pages      = {517--580},
  issn       = {0020-9910,1432-1297},
  mrclass    = {11R33 (11G40)},
  mrnumber   = {4372220},
  mrreviewer = {Henri\ Darmon},
  doi        = {10.1007/s00222-021-01072-y},
  url        = {https://doi.org/10.1007/s00222-021-01072-y}
}

@article {CGS,
    AUTHOR = {Castella, Francesc and Grossi, Giada and Skinner, Christopher},
     TITLE = {Mazur's main conjecture at {E}isenstein primes},
   JOURNAL = {Math. Ann.},
  FJOURNAL = {Mathematische Annalen},
    VOLUME = {393},
      YEAR = {2025},
    NUMBER = {2},
     PAGES = {2451--2506},
      ISSN = {0025-5831,1432-1807},
   MRCLASS = {11R23 (11G05 11G40)},
  MRNUMBER = {4984348},
       DOI = {10.1007/s00208-025-03239-x},
       URL = {https://doi.org/10.1007/s00208-025-03239-x},
}

@incollection{CV,
  author     = {Cornut, Christophe and Vatsal, Vinayak},
  title      = {Nontriviality of {R}ankin-{S}elberg {$L$}-functions and {CM}
                points},
  booktitle  = {{$L$}-functions and {G}alois representations},
  series     = {London Math. Soc. Lecture Note Ser.},
  volume     = {320},
  pages      = {121--186},
  publisher  = {Cambridge Univ. Press, Cambridge},
  year       = {2007},
  isbn       = {978-0-521-69415-5},
  mrclass    = {11G18 (11F66 11F70 11G15)},
  mrnumber   = {2392354},
  mrreviewer = {Benjamin\ V.\ Howard},
  doi        = {10.1017/CBO9780511721267.005},
  url        = {https://doi.org/10.1017/CBO9780511721267.005}
}

@book{deShalit,
  author     = {de Shalit, Ehud},
  title      = {Iwasawa theory of elliptic curves with complex multiplication},
  series     = {Perspectives in Mathematics},
  volume     = {3},
  note       = {$p$-adic $L$ functions},
  publisher  = {Academic Press, Inc., Boston, MA},
  year       = {1987},
  pages      = {x+154},
  isbn       = {0-12-210255-X},
  mrclass    = {11G05 (11G15 11G16 11R23 14K22)},
  mrnumber   = {917944},
  mrreviewer = {Karl\ Rubin}
}

@article{FH,
  author     = {Friedberg, Solomon and Hoffstein, Jeffrey},
  title      = {Nonvanishing theorems for automorphic {$L$}-functions on
                {${\rm GL}(2)$}},
  journal    = {Ann. of Math. (2)},
  fjournal   = {Annals of Mathematics. Second Series},
  volume     = {142},
  year       = {1995},
  number     = {2},
  pages      = {385--423},
  issn       = {0003-486X,1939-8980},
  mrclass    = {11F70 (11F67 22E55)},
  mrnumber   = {1343325},
  mrreviewer = {David\ Ginzburg},
  doi        = {10.2307/2118638},
  url        = {https://doi.org/10.2307/2118638}
}

@article{hsieh2014special,
  title   = {Special values of anticyclotomic Rankin-Selberg $ L $-functions},
  author  = {Hsieh, Ming-Lun},
  journal = {Documenta Mathematica},
  volume  = {19},
  pages   = {709--767},
  year    = {2014}
}

@incollection{Kato,
  author    = {Kato, Kazuya},
  title     = {{{\(p\)}}-adic {Hodge} theory and values of zeta functions of modular forms.},
  booktitle = {Cohomologies \(p\)-adiques et applications arithm\'etiques (III)},
  isbn      = {2-85629-158-9},
  pages     = {117--290},
  year      = {2004},
  publisher = {Paris: Soci{\'e}t{\'e} Math{\'e}matique de France},
  language  = {English},
  keywords  = {11F85,11F67,11G40,11R33,11S80,14G10,14G35},
  zbmath    = {2123616},
  zbl       = {1142.11336}
}

@article{Mazur,
  author     = {Mazur, B.},
  title      = {Rational isogenies of prime degree (with an appendix by {D}.
                {G}oldfeld)},
  journal    = {Invent. Math.},
  fjournal   = {Inventiones Mathematicae},
  volume     = {44},
  year       = {1978},
  number     = {2},
  pages      = {129--162},
  issn       = {0020-9910,1432-1297},
  mrclass    = {14K07 (10D35 14G25)},
  mrnumber   = {482230},
  mrreviewer = {V.\ V.\ Shokurov},
  doi        = {10.1007/BF01390348},
  url        = {https://doi.org/10.1007/BF01390348}
}

@article{JSW,
  author     = {Jetchev, Dimitar and Skinner, Christopher and Wan, Xin},
  title      = {The {B}irch and {S}winnerton-{D}yer formula for elliptic
                curves of analytic rank one},
  journal    = {Camb. J. Math.},
  fjournal   = {Cambridge Journal of Mathematics},
  volume     = {5},
  year       = {2017},
  number     = {3},
  pages      = {369--434},
  issn       = {2168-0930,2168-0949},
  mrclass    = {11G40 (11G05 11G07 11R23)},
  mrnumber   = {3684675},
  mrreviewer = {Jeffrey\ Hatley},
  doi        = {10.4310/CJM.2017.v5.n3.a2},
  url        = {https://doi.org/10.4310/CJM.2017.v5.n3.a2}
}

@article{GZ,
  author     = {Gross, Benedict H. and Zagier, Don B.},
  title      = {Heegner points and derivatives of {$L$}-series},
  journal    = {Invent. Math.},
  fjournal   = {Inventiones Mathematicae},
  volume     = {84},
  year       = {1986},
  number     = {2},
  pages      = {225--320},
  issn       = {0020-9910,1432-1297},
  mrclass    = {11G40 (11F11 11G05 14G10)},
  mrnumber   = {833192},
  mrreviewer = {Loren\ D.\ Olson},
  doi        = {10.1007/BF01388809},
  url        = {https://doi.org/10.1007/BF01388809}
}

@article{rohrlich1984functions,
  title     = {{On L-functions of elliptic curves and cyclotomic towers}},
  author    = {Rohrlich, David E},
  journal   = {Inventiones mathematicae},
  volume    = {75},
  number    = {3},
  pages     = {409--423},
  year      = {1984},
  publisher = {Springer}
}

@article{ski2016,
  author     = {Skinner, Christopher},
  title      = {Multiplicative reduction and the cyclotomic main conjecture
                for {${\rm GL}_2$}},
  journal    = {Pacific J. Math.},
  fjournal   = {Pacific Journal of Mathematics},
  volume     = {283},
  year       = {2016},
  number     = {1},
  pages      = {171--200},
  issn       = {0030-8730,1945-5844},
  mrclass    = {11G40 (11F67 11R23)},
  mrnumber   = {3513846},
  mrreviewer = {Giovanni\ Rosso},
  doi        = {10.2140/pjm.2016.283.171},
  url        = {https://doi.org/10.2140/pjm.2016.283.171}
}

@article{skinner2014indivisibility,
  title   = {{Indivisibility of Heegner points in the multiplicative case}},
  author  = {Skinner, Christopher and Zhang, Wei},
  journal = {arXiv preprint arXiv:1407.1099},
  year    = {2014}
}

@article{skinner2014iwasawa,
  title     = {{The Iwasawa main conjectures for $\GL_2$}},
  author    = {Skinner, Christopher and Urban, Eric},
  journal   = {Inventiones mathematicae},
  volume    = {195},
  pages     = {1--277},
  year      = {2014},
  publisher = {Springer}
}

@article{Wan,
  author   = {Wan, Xin},
  title    = {Heegner point {Kolyvagin} system and {Iwasawa} main conjecture},
  fjournal = {Acta Mathematica Sinica. English Series},
  journal  = {Acta Math. Sin., Engl. Ser.},
  issn     = {1439-8516},
  volume   = {37},
  number   = {1},
  pages    = {104--120},
  year     = {2021},
  language = {English},
  doi      = {10.1007/s10114-021-8355-7},
  keywords = {11R23},
  zbmath   = {7314980},
  zbl      = {1464.11116}
}

@article {hida81,
    AUTHOR = {Hida, Haruzo},
     TITLE = {Congruence of cusp forms and special values of their zeta
              functions},
   JOURNAL = {Invent. Math.},
  FJOURNAL = {Inventiones Mathematicae},
    VOLUME = {63},
      YEAR = {1981},
    NUMBER = {2},
     PAGES = {225--261},
      ISSN = {0020-9910,1432-1297},
   MRCLASS = {10D12},
  MRNUMBER = {610538},
MRREVIEWER = {O.\ M.\ Fomenko},
       DOI = {10.1007/BF01393877},
       URL = {https://doi.org/10.1007/BF01393877},
}

@article {rib83,
    AUTHOR = {Ribet, Kenneth A.},
     TITLE = {Mod {$p$}\ {H}ecke operators and congruences between modular
              forms},
   JOURNAL = {Invent. Math.},
  FJOURNAL = {Inventiones Mathematicae},
    VOLUME = {71},
      YEAR = {1983},
    NUMBER = {1},
     PAGES = {193--205},
      ISSN = {0020-9910,1432-1297},
   MRCLASS = {10D23 (10D12 10D45)},
  MRNUMBER = {688264},
MRREVIEWER = {G.\ van der Geer},
       DOI = {10.1007/BF01393341},
       URL = {https://doi.org/10.1007/BF01393341},
}

@incollection {Kol,
    AUTHOR = {Kolyvagin, V. A.},
     TITLE = {Euler systems},
 BOOKTITLE = {The {G}rothendieck {F}estschrift, {V}ol.\ {II}},
    SERIES = {Progr. Math.},
    VOLUME = {87},
     PAGES = {435--483},
 PUBLISHER = {Birkh\"auser Boston, Boston, MA},
      YEAR = {1990},
      ISBN = {0-8176-3428-2},
   MRCLASS = {11R34 (11G05 11G40 11R29)},
  MRNUMBER = {1106906},
MRREVIEWER = {Karl\ Rubin},
}

@article {Serre,
    AUTHOR = {Serre, Jean-Pierre},
     TITLE = {Sur les repr\'esentations modulaires de degr\'e{} {$2$} de
              {$\Gal(\overline{\BQ}/\BQ)$}},
   JOURNAL = {Duke Math. J.},
  FJOURNAL = {Duke Mathematical Journal},
    VOLUME = {54},
      YEAR = {1987},
    NUMBER = {1},
     PAGES = {179--230},
      ISSN = {0012-7094,1547-7398},
   MRCLASS = {11F11 (11G05 14G15 14G25 14K15)},
  MRNUMBER = {885783},
MRREVIEWER = {M.\ A.\ Kenku},
       DOI = {10.1215/S0012-7094-87-05413-5},
       URL = {https://doi.org/10.1215/S0012-7094-87-05413-5},
}

@article {Wan1,
    AUTHOR = {Wan, Xin},
     TITLE = {Iwasawa main conjecture for {R}ankin-{S}elberg {$p$}-adic
              {$L$}-functions},
   JOURNAL = {Algebra Number Theory},
  FJOURNAL = {Algebra \& Number Theory},
    VOLUME = {14},
      YEAR = {2020},
    NUMBER = {2},
     PAGES = {383--483},
      ISSN = {1937-0652,1944-7833},
   MRCLASS = {11R23 (11F67)},
  MRNUMBER = {4195651},
MRREVIEWER = {Andreas\ Nickel},
       DOI = {10.2140/ant.2020.14.383},
       URL = {https://doi.org/10.2140/ant.2020.14.383},
}

@article {PR,
    AUTHOR = {Perrin-Riou, Bernadette},
     TITLE = {Fonctions {$L$} {$p$}-adiques, th\'eorie d'{I}wasawa et points
              de {H}eegner},
   JOURNAL = {Bull. Soc. Math. France},
  FJOURNAL = {Bulletin de la Soci\'et\'e{} Math\'ematique de France},
    VOLUME = {115},
      YEAR = {1987},
    NUMBER = {4},
     PAGES = {399--456},
      ISSN = {0037-9484},
   MRCLASS = {11R23 (11G40)},
  MRNUMBER = {928018},
MRREVIEWER = {Kenneth\ Kramer},
       URL = {http://www.numdam.org/item?id=BSMF_1987__115__399_0},
}

@article {wan14,
    AUTHOR = {Wan, Xin},
     TITLE = {The {I}wasawa main conjecture for {H}ilbert modular forms},
   JOURNAL = {Forum Math. Sigma},
  FJOURNAL = {Forum of Mathematics. Sigma},
    VOLUME = {3},
      YEAR = {2015},
     PAGES = {Paper No. e18, 95},
      ISSN = {2050-5094},
   MRCLASS = {11R23},
  MRNUMBER = {3482263},
MRREVIEWER = {Stefano\ Vigni},
       DOI = {10.1017/fms.2015.16},
       URL = {https://doi.org/10.1017/fms.2015.16},
}

@article{zhang2014selmer,
  title={Selmer groups and the indivisibility of Heegner points},
  author={Zhang, Wei},
  journal={Cambridge Journal of Mathematics},
  volume={2},
  number={2},
  pages={191--253},
  year={2014},
  publisher={International Press of Boston}
}

@article{skinner2020converse,
  title={A converse to a theorem of Gross, Zagier, and Kolyvagin},
  author={Skinner, Christopher},
  journal={Annals of Mathematics},
  volume={191},
  number={2},
  pages={329--354},
  year={2020},
  publisher={Department of Mathematics, Princeton University Princeton, New Jersey, USA}
}

@article{burungale2023non,
  title={Non-vanishing of Kolyvagin systems and Iwasawa theory},
  author={Burungale, Ashay and Castella, Francesc and Grossi, Giada and Skinner, Christopher},
  journal={arXiv preprint arXiv:2312.09301},
  year={2023}
}

@article {Vat,
    AUTHOR = {Vatsal, V.},
     TITLE = {Special values of anticyclotomic {$L$}-functions},
   JOURNAL = {Duke Math. J.},
  FJOURNAL = {Duke Mathematical Journal},
    VOLUME = {116},
      YEAR = {2003},
    NUMBER = {2},
     PAGES = {219--261},
      ISSN = {0012-7094,1547-7398},
   MRCLASS = {11G40 (11F33 11F67 11G18 11R23)},
  MRNUMBER = {1953292},
MRREVIEWER = {Jan\ Nekov\'a\v r},
       DOI = {10.1215/S0012-7094-03-11622-1},
       URL = {https://doi.org/10.1215/S0012-7094-03-11622-1},
}

@article {Hun,
    AUTHOR = {Hung, Pin-Chi},
     TITLE = {On the non-vanishing {${\rm mod}\, \ell$} of central
              {$L$}-values with anticyclotomic twists for {H}ilbert modular
              forms},
   JOURNAL = {J. Number Theory},
  FJOURNAL = {Journal of Number Theory},
    VOLUME = {173},
      YEAR = {2017},
     PAGES = {170--209},
      ISSN = {0022-314X,1096-1658},
   MRCLASS = {11F67 (11F03 11R23)},
  MRNUMBER = {3581914},
MRREVIEWER = {Nadim\ Rustom},
       DOI = {10.1016/j.jnt.2016.09.013},
       URL = {https://doi.org/10.1016/j.jnt.2016.09.013},
}

@article {How,
    AUTHOR = {Howard, Benjamin},
     TITLE = {The {H}eegner point {K}olyvagin system},
   JOURNAL = {Compos. Math.},
  FJOURNAL = {Compositio Mathematica},
    VOLUME = {140},
      YEAR = {2004},
    NUMBER = {6},
     PAGES = {1439--1472},
      ISSN = {0010-437X,1570-5846},
   MRCLASS = {11G05 (11R23)},
  MRNUMBER = {2098397},
MRREVIEWER = {Henri\ Darmon},
       DOI = {10.1112/S0010437X04000569},
       URL = {https://doi.org/10.1112/S0010437X04000569},
}

@article {Nek,
    AUTHOR = {Nekov\'a\v r, Jan},
     TITLE = {Selmer complexes},
   JOURNAL = {Ast\'erisque},
  FJOURNAL = {Ast\'erisque},
    NUMBER = {310},
      YEAR = {2006},
     PAGES = {viii+559},
      ISSN = {0303-1179,2492-5926},
      ISBN = {978-2-85629-226-6},
   MRCLASS = {11R23 (11F41 11G40 11R34 22E41)},
  MRNUMBER = {2333680},
MRREVIEWER = {Laurent\ N.\ Berger},
}

@article {CH,
    AUTHOR = {Castella, Francesc and Hsieh, Ming-Lun},
     TITLE = {Heegner cycles and {$p$}-adic {$L$}-functions},
   JOURNAL = {Math. Ann.},
  FJOURNAL = {Mathematische Annalen},
    VOLUME = {370},
      YEAR = {2018},
    NUMBER = {1-2},
     PAGES = {567--628},
      ISSN = {0025-5831,1432-1807},
   MRCLASS = {11G40 (11F67 11F70 11F85 11S40 11S80)},
  MRNUMBER = {3747496},
MRREVIEWER = {Daniel\ Barsky},
       DOI = {10.1007/s00208-017-1517-3},
       URL = {https://doi.org/10.1007/s00208-017-1517-3},
}

@article {PR86,
    AUTHOR = {Perrin-Riou, Bernadette},
     TITLE = {Points de {H}eegner et d\'eriv\'ees de fonctions {$L$}
              {$p$}-adiques},
   JOURNAL = {C. R. Acad. Sci. Paris S\'er. I Math.},
  FJOURNAL = {Comptes Rendus des S\'eances de l'Acad\'emie des Sciences.
              S\'erie I. Math\'ematique},
    VOLUME = {303},
      YEAR = {1986},
    NUMBER = {5},
     PAGES = {165--168},
      ISSN = {0249-6291},
   MRCLASS = {11G40 (11F11 11R23 14G10)},
  MRNUMBER = {854725},
MRREVIEWER = {Glenn\ Stevens},
}

@article {LLZ15,
    AUTHOR = {Lei, Antonio and Loeffler, David and Zerbes, Sarah Livia},
     TITLE = {Euler systems for modular forms over imaginary quadratic
              fields},
   JOURNAL = {Compos. Math.},
  FJOURNAL = {Compositio Mathematica},
    VOLUME = {151},
      YEAR = {2015},
    NUMBER = {9},
     PAGES = {1585--1625},
      ISSN = {0010-437X,1570-5846},
   MRCLASS = {11F85 (11F67 11G40 14G35)},
  MRNUMBER = {3406438},
MRREVIEWER = {Giovanni\ Rosso},
       DOI = {10.1112/S0010437X14008033},
       URL = {https://doi.org/10.1112/S0010437X14008033},
}

@article {Hida86,
    AUTHOR = {Hida, Haruzo},
     TITLE = {Galois representations into {${\rm GL}_2({\bf Z}_p[[X]])$}
              attached to ordinary cusp forms},
   JOURNAL = {Invent. Math.},
  FJOURNAL = {Inventiones Mathematicae},
    VOLUME = {85},
      YEAR = {1986},
    NUMBER = {3},
     PAGES = {545--613},
      ISSN = {0020-9910,1432-1297},
   MRCLASS = {11F11 (11F33 11F85 11R23)},
  MRNUMBER = {848685},
MRREVIEWER = {Ernst-Wilhelm\ Zink},
       DOI = {10.1007/BF01390329},
       URL = {https://doi.org/10.1007/BF01390329},
}

@article {Hida85,
    AUTHOR = {Hida, Haruzo},
     TITLE = {A {$p$}-adic measure attached to the zeta functions associated
              with two elliptic modular forms. {I}},
   JOURNAL = {Invent. Math.},
  FJOURNAL = {Inventiones Mathematicae},
    VOLUME = {79},
      YEAR = {1985},
    NUMBER = {1},
     PAGES = {159--195},
      ISSN = {0020-9910,1432-1297},
   MRCLASS = {11S40 (11F67 11F85)},
  MRNUMBER = {774534},
MRREVIEWER = {Neal\ Koblitz},
       DOI = {10.1007/BF01388661},
       URL = {https://doi.org/10.1007/BF01388661},
}

@article {BDP,
    AUTHOR = {Bertolini, Massimo and Darmon, Henri and Prasanna, Kartik},
     TITLE = {Generalized {H}eegner cycles and {$p$}-adic {R}ankin
              {$L$}-series},
      NOTE = {With an appendix by Brian Conrad},
   JOURNAL = {Duke Math. J.},
  FJOURNAL = {Duke Mathematical Journal},
    VOLUME = {162},
      YEAR = {2013},
    NUMBER = {6},
     PAGES = {1033--1148},
      ISSN = {0012-7094,1547-7398},
   MRCLASS = {11G40 (11G05 11G15 11G35)},
  MRNUMBER = {3053566},
MRREVIEWER = {Jan\ Nekov\'a\v r},
       DOI = {10.1215/00127094-2142056},
       URL = {https://doi.org/10.1215/00127094-2142056},
}
\end{document}